\newcommand{\ga}{\alpha}
\newcommand{\gb}{\beta}
\newcommand{\gw}{\omega}
\newcommand{\gs}{\sigma}
\newcommand{\eps}{\varepsilon}
\newcommand{\coll}{\mathrm{Coll}}
\newcommand{\cantor}{2^\gw}
\newcommand{\dotxgen}{{\dot x}_{\mathit{gen}}}
\newcommand{\dom}{\mathrm{dom}}
\newcommand{\power}{\mathcal{P}}
\newtheorem{theorem}{Theorem}[section]
\newtheorem{corollary}[theorem]{Corollary}
\newtheorem{proposition}[theorem]{Proposition}
\theoremstyle{definition}
\newtheorem{definition}[theorem]{Definition}
\newtheorem{example}[theorem]{Example}
\newtheorem{question}[theorem]{Question}
\title{Transcendental pairs of generic extensions\footnote{2010 AMS subject classification 03E15, 03E25, 03E35.}}
\author{
Jind{\v r}ich Zapletal\\
University of Florida}
\begin{document}
\maketitle

\begin{abstract}
I prove several consistency results in choiceless set theory regarding chromatic numbers of hypergraphs on Polish groups defined by group equations, and hypergraphs on $\power(\gw)$ modulo finite defined by boolean identities.
\end{abstract}

\section{Introduction}

This paper is a contribution to the study of Borel hypergraphs in choiceless set theory as started in \cite{z:geometric} and related to the concerns of descriptive graph theory \cite{miller:bsl}. I isolate a certain class of $F_\gs$-hypergraphs on $K_\gs$-spaces and show several consistency results of the following kind: it is consistent with ZF+DC that hypergraphs in this class have countable chromatic number while hypergraphs far from this class have uncountable chromatic number. The method is quite flexible and can be used to prove many consistency results which are not related to chromatic numbers. The theorems I state are more motivational than strongest possible; they are centered around certain interesting hypergraph classes which have been unjustly overlooked so far.

The first class of hypergraphs is connected with Polish groups and group equations. For a Polish group $G$ one may consider the hypergraph on $G$ of solutions to a given group equation, or of solutions to one of a given countable set of group equations. These hypergraphs may serve as a tool distinguishing between various Polish groups. As one example, consider the hypergraph $\Delta(G)$ of quadruples which solve the equation $g_0g_1^{-1}g_2g_3^{-1}=1$.  I prove

\begin{theorem}
\label{1theorem}
Let $G$ be a $K_\gs$ Polish group. It is consistent relative to an inaccessible cardinal that ZF+DC holds, the chromatic number of $\Delta(G)$ is countable, yet the chromatic number of $\Delta(S_\infty)$ is uncountable.
\end{theorem}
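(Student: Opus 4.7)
The plan is to work in the balanced-forcing framework of \cite{z:geometric}. I would start from $L$ with an inaccessible $\kappa$ and first pass to the symmetric Solovay extension over the collapse $\coll(\gw,<\kappa)$. Over the Solovay model I then force with a partial order $P=P_G$ whose generic object is a function $c\colon G\to\gw$ assigning colors with no monochromatic quadruple in $\Delta(G)$. Conditions in $P$ are small (size $<\kappa$) partial colorings of $G$ with no monochromatic $\Delta(G)$-edge; the $K_\gs$-decomposition $G=\bigcup_n K_n$ into compacts is used to organise these approximations so that balance in the sense of \cite{z:geometric} can be verified. The balanced virtual conditions should correspond to ``total'' ground-coded colorings of $G$ built by compatible choices on the compact pieces $K_n$, the compactness providing the absoluteness needed for amalgamation. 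In the symmetric submodel $L(\R)[c]$ associated with $P$ one obtains a choiceless model satisfying ZF+DC in which $c$ witnesses that $\Delta(G)$ has countable chromatic number.

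The substantive work is to show that in this same model the chromatic number of $\Delta(S_\infty)$ is still uncountable. The background theory of \cite{z:geometric} already gives that in the plain Solovay model this chromatic number is uncountable, so by the general preservation theorem for balanced forcings it suffices to prove that $P$ preserves it. Concretely, assume for contradiction that some condition forces a $P$-name $\dot d$ to be a countable coloring of $\Delta(S_\infty)$, and locate $\dot d$ below a balanced virtual condition $\bar p$ so that $\bar p$ decides the color of any ``$\bar p$-generic'' permutation of $\gw$. Using the rewriting $g_0=g_3 g_2^{-1} g_1$ of the defining equation, I would then select three mutually $\bar p$-generic permutations $g_1,g_2,g_3$ that $\bar p$ forces to share a common color, and verify that the computed $g_0=g_3 g_2^{-1} g_1$ is also $\bar p$-generic and forced the same color, producing a monochromatic $\Delta(S_\infty)$-edge and the desired contradiction.

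The main obstacle will be this final amalgamation and its interaction with the $K_\gs$ versus non-$K_\gs$ distinction. The $K_\gs$ property of $G$ is precisely what lets one organise balance around a sequence of compact pieces and blocks the analogous amalgamation inside $G$, so the coloring $c$ of $\Delta(G)$ survives. For $S_\infty$, the absence of any $K_\gs$ exhaustion should be what supplies enough ``free'' permutations outside the support of $\bar p$ to combine into a monochromatic quadruple via the group equation. Verifying this amalgamation, and tailoring the balance notion of $P_G$ so that adding a coloring of $\Delta(G)$ does not accidentally add one of $\Delta(S_\infty)$, is where the core difficulty lies; the remaining ingredients---the Solovay-style setup, preservation of DC, and the reduction of coloring names to balanced conditions---are standard from \cite{z:geometric}.
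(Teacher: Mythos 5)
Your overall architecture (Solovay model from an inaccessible, then a $\gs$-closed Suslin coloring poset for $\Delta(G)$, then a preservation argument for $\Delta(S_\infty)$ exploiting the equation) matches the paper, but the core of the argument is missing. First, there is no ``general preservation theorem for balanced forcings'' that keeps the chromatic number of $\Delta(S_\infty)$ uncountable: balanced posets can perfectly well add such a coloring, which is exactly why the paper introduces a strictly stronger amalgamation property, \emph{transcendental balance}, defined via mutually transcendental pairs of extensions. Second, your amalgamation configuration is wrong in shape: if $g_1,g_2,g_3$ are mutually generic and $g_0=g_3g_2^{-1}g_1$ is computed from them, the model $V[g_0]$ has no usable independence from the others, so nothing lets you amalgamate the four conditions extracted from the coloring name. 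The paper instead takes a Cohen-generic point of the solution variety $Y\subset S_\infty^4$ and shows (Example~\ref{fexample}) that it splits into two mutually Cohen-generic pairs $(g_0,g_2)$ and $(g_1,g_3)$ whose joins $V[g_0,g_2]$ and $V[g_1,g_3]$ are \emph{mutually transcendental}; this rests on a dynamical property of $S_\infty$ (locally dense diagonal orbits of the twisted conjugation action of $S_\infty^2$, a relative of ample generics), not on the absence of a $K_\gs$ exhaustion of $S_\infty$ as you suggest. Transcendental balance is then applied twice within the mutually generic pairs and once across the transcendental pair to produce the monochromatic quadruple (Theorem~\ref{thetatheorem}'s companion for $S_\infty$ and Corollary~\ref{deltacorollary}).

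On the $G$ side your poset is also not right as stated, and the role of $K_\gs$ is different from what you describe. Conditions of size $<\kappa$ do not form a Suslin forcing, so the balanced-forcing machinery over the Solovay model does not apply to them; the paper's conditions are \emph{countable} partial colorings with $\Gamma$-closed domain, and, crucially, the ordering is not inclusion but involves a Borel ideal $\mathcal{I}$ together with the ``tight'' and ``slick'' clauses of Definition~\ref{pg4definition}. These are what yield the explicit compatibility criterion (Proposition~\ref{ciciproposition}), the $n,4$-centeredness, and the balance analysis in which the balanced virtual conditions are total colorings $c\colon G\to\gw\times\gw$ (available under CH), not objects organized along the compact pieces $K_n$. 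The $K_\gs$-ness of $G$ (with $\Delta(G)$ redundant and $F_\gs$) enters only through Proposition~\ref{equiproposition}: walks witnessing the hyperedge-generated equivalence relations can be confined to ground-model-coded compact sets, so for mutually transcendental extensions $E(X\cap V[G_1],\Gamma)$ and $E(X\cap V,\Gamma)$ agree on $X\cap V[G_0]$, which is exactly what makes the total colorings transcendentally balanced. Without the transcendence notion, the tailored ordering, and the $S_\infty$ orbit argument, the proposed proof cannot be completed.
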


\noindent This consistency result depends on the fact that $S_\infty$ has (a small strengthening of) the ample generics property. It seems to be difficult to separate chromatic numbers of the hypergraphs $\Delta(G)$ for various other Polish groups, such as $G=S_\infty$ and $G=$the unitary group.

The second class of hypergraphs is connected with the quotient algebra $\power(\gw)$ modulo finite and its Boolean operations. One can define hypergraphs on it using various Boolean identities. I consider two simple examples. For a natural number $n\geq 2$ let $\Gamma_n$ be the hypergraph on $\power(\gw)$ of arity $n$ consisting of $n$-tuples of sets which modulo finite form a partition of $\gw$, and $\Theta_n$ be the hypergraph of arity $n$ on $\power(\gw)$ consisting of sets $d$ of size $n$ such that $\bigcap d=0$ and $\bigcup d=\gw$, both modulo finite. In ZFC, all of these hypergraphs have chromatic number two, as a membership in any given nonprincipal ultrafilter on $\gw$ is a coloring. However, in the choiceless context clear distinctions arise.

\begin{theorem}
\label{2theorem}
It is consistent relative to an inaccessible cardinal that ZF+DC holds, the chromatic number of $\Gamma_3$ is countable, yet the chromatic number of $\Gamma_4$ is uncountable.
\end{theorem}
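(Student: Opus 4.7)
The plan is to apply the balanced forcing machinery developed in \cite{z:geometric}. Let $\kappa$ be inaccessible and let $W$ denote the symmetric Solovay model associated with the Levy collapse $\Coll(\gw,<\kappa)$; $W$ satisfies ZF+DC. In the ground model $V$, consider the $\gs$-closed poset $P$ whose conditions are countable partial functions $p\colon X_p \to \gw$ with $X_p \subseteq \power(\gw)$ such that $p$ contains no monochromatic 3-partition modulo finite, ordered by inverse extension. A $P$-generic filter $H$ determines a total function $c_H\colon \power(\gw) \to \gw$ that by construction is a $\Gamma_3$-coloring, and the balanced extension $W[H]$ continues to satisfy ZF+DC.

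First I would identify the balanced virtual conditions of $P$. These are canonically represented by total $\Gamma_3$-colorings $\bar c\colon \power(\gw)^{V'} \to \gw$ defined in intermediate ZFC models $V'$, and below every $p \in P$ there is such a balanced condition by a straightforward bookkeeping: since the only constraint is to avoid monochromatic 3-partitions and any two previously colored sets admit only countably many ``third parts'' completing them to a 3-partition, one can always extend the coloring to one more set consistently. The balance of $P$ then ensures that in $W[H]$ the generic function $c_H$ witnesses that $\Gamma_3$ has countable chromatic number.

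The main work is to show that $\Gamma_4$ still has uncountable chromatic number in $W[H]$. Suppose for contradiction that some condition forces $\dot f\colon \power(\gw) \to \gw$ to be a $\Gamma_4$-coloring. Strengthening to a balanced virtual condition $\bar c$ with intermediate model $V'$, work in $V'$ and consider the auxiliary Cohen-style forcing that adds a generic $r \in 4^\gw$ and thence the 4-partition $\{a_0, a_1, a_2, a_3\}$ defined by $a_i = r^{-1}(i)$. The natural $S_4$-action on this forcing permutes the four parts; since balance forces $\dot f(a_i)$ to depend only on $\bar c$ and the $V'$-generic history of $a_i$, the $S_4$-automorphisms must identify the values $\dot f(a_i)$, so $\dot f(a_0) = \dot f(a_1) = \dot f(a_2) = \dot f(a_3)$ in the balanced extension. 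This yields a monochromatic 4-partition, contradicting the $\Gamma_4$-coloring property of $\dot f$.

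The principal obstacle is carrying out the symmetry argument of the previous paragraph rigorously: one must verify that the $S_4$-action on the auxiliary forcing extends to an action on the product with $P$ that fixes $\bar c$, and that any $\R$-definable function $\dot f$ in $W[H]$ is forced invariant under this action. This step is technically parallel to the invariance computations of \cite{z:geometric}. The crucial asymmetry between $\Gamma_3$ and $\Gamma_4$ appears precisely here: the analogous argument for $\Gamma_3$ would require $\bar c$ to contain a monochromatic 3-partition generated by an $S_3$-symmetric generic, which is exactly what the $P$-constraint forbids; for $\Gamma_4$, no such constraint is imposed, so the symmetric 4-partition is produced without obstruction.
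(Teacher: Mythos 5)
Your plan follows the right general architecture (a $\gs$-closed Suslin coloring poset over the Solovay model, balanced conditions given by total colorings), but both halves have genuine gaps. First, the poset itself: with conditions ordered by plain reverse extension and no further constraints, the forcing does not even add a \emph{total} $\Gamma_3$-coloring. For a fixed infinite co-infinite $x\subset\gw$ one can build a single condition $p$ whose domain consists of pairs $\{y_i,z_i\}$ ($i\in\gw$) with $y_i\cap z_i=^*0$ and $y_i\cup z_i=^*\gw\setminus x$, colored $p(y_i)=p(z_i)=i$; this is a legal condition (no monochromatic mod-finite $3$-partition occurs in its domain), yet below $p$ every color is forbidden for $x$, so $x$ never enters the domain of the generic function. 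The same phenomenon breaks your ``straightforward bookkeeping'' for total colorings: redundancy says each already-colored \emph{pair} has only countably many completions, but when you try to color one new set $x$ there may be continuum many already-colored monochromatic pairs each forbidding a color, so the extension step can fail outright. This is exactly what the paper's Definition~\ref{pgdefinition} is engineered to prevent: domains are required to be $\Gamma$-closed, and the ordering demands that colors used on each $E(\dom(p_0),\Gamma)$-class lie in a fixed Borel ideal $\mathcal{I}$; density of adding a point is then Proposition~\ref{cocoproposition}(2), and the existence of total colorings (hence of transcendentally balanced virtual conditions) is proved only under CH by an $\gw_1$-recursion that uses this ordering. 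None of this is available for your poset as defined.

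Second, the symmetry argument for $\Gamma_4$ does not work. Balance gives that $\dot f(a_i)$ is decided from the balanced condition together with the part $a_i$, and the $S_4$-action on the auxiliary forcing gives at most that the \emph{same definable rule} computes the color of each part; it does not identify the four values, since the four parts are distinct sets which a coloring may legitimately separate (any fixed Borel function of the set itself is invariant under your automorphisms yet non-constant on the partition). Your proposed explanation of the $3$ versus $4$ asymmetry is also off target: the generic parts lie outside the domain of $\bar c$, so nothing about $\bar c$ ``containing'' a monochromatic partition is relevant. What the paper actually uses is an amalgamation argument (Theorem~\ref{gammatheorem}, Corollary~\ref{gammacorollary}): some color class of a putative countable $\Gamma_4$-coloring is nonmeager; one takes a Cohen-generic $4$-partition, notes via Example~\ref{dexample} and Proposition~\ref{productproposition} that the relevant submodels are mutually transcendental, finds conditions $p_i$ forcing each part into that class, amalgamates them three at a time using transcendental balance, and then all four using the $4,3$-centeredness of the coloring poset (Proposition~\ref{cocoproposition}); the asymmetry with $\Gamma_3$ is precisely that $4,3$-centeredness suffices for a $\Gamma_4$-edge while the poset is not $3,2$-centered. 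Your sketch supplies neither the centeredness property nor any substitute for this amalgamation step, so the uncountability of the chromatic number of $\Gamma_4$ is not established.
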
 

\begin{theorem}
\label{3theorem}
It is consistent relative to an inaccessible cardinal that ZF+DC holds, the chromatic number of $\Gamma_4$ is countable, yet the chromatic number of $\Gamma_5$ is uncountable.
\end{theorem}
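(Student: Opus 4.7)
The plan is to follow the balanced-forcing methodology of \cite{z:geometric}, in direct analogy with the proof of Theorem \ref{2theorem}. Starting from a model $V$ of ZFC with an inaccessible cardinal $\kappa$, I will define a $\sigma$-closed Suslin forcing $P = P_{\Gamma_4}$ whose generic is a countable coloring of $\Gamma_4$, iterate $\coll(\gw,<\kappa) * \dot P$, and pass to the Solovay-style symmetric submodel $W$ consisting of sets hereditarily definable from a real and the generic for $\dot P$ over the ground model. Standard arguments from \cite{z:geometric} then give $W \models \text{ZF+DC}$.

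The forcing $P$ will be the natural poset of countable partial functions $p : \power(\gw) \to \gw$ such that no $\Gamma_4$-edge contained in $\dom(p)$ is $p$-monochromatic, ordered by reverse inclusion. The $\sigma$-closure is immediate, and a density argument shows that the generic union $c_{\mathrm{gen}}: \power(\gw) \to \gw$ is a total function avoiding monochromatic 4-partitions, establishing countable $\chi(\Gamma_4)$ in $W$.

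The crux will be to identify a rich class of balanced virtual conditions for $P$. In the template of \cite{z:geometric}, balanced conditions for this kind of poset correspond to total $\Gamma_4$-colorings defined in outer models that admit an amalgamation property: two mutually generic $\Gamma_4$-colorings of $\power(\gw)$ can be merged into a single coherent $\Gamma_4$-coloring. Once this balance is established, the general machinery implies that any coloring of a Borel hypergraph in $W$ is controlled by a balanced virtual condition. The uncountable-chromatic-number half of the theorem then reduces to showing that no such condition can encode a $\Gamma_5$-coloring: if $c \in W$ were a countable $\Gamma_5$-coloring, one finds two mutually generic extensions below the same balanced condition and uses the failure of 5-partition amalgamation to assemble a monochromatic 5-partition in $W$, a contradiction.

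The main obstacle will be the amalgamation dichotomy separating $\Gamma_4$ from $\Gamma_5$, which is ultimately a combinatorial property of $\power(\gw)/\mathrm{Fin}$. A 4-partition $\{A_0, A_1, A_2, A_3\}$ decomposes into two coarser 2-partitions such as $\{A_0 \cup A_1, A_2 \cup A_3\}$ and $\{A_0 \cup A_2, A_1 \cup A_3\}$, and this product structure should provide the Boolean-algebraic flexibility needed to merge two independent $\Gamma_4$-colorings. A 5-partition admits no analogous factorization, so the parallel merging step must fail --- and in fact must fail so strongly that a monochromatic 5-tuple is forced to appear. Verifying balance of $P$ with respect to $\Gamma_4$ and its failure for $\Gamma_5$, and in particular formulating the right combinatorial lemma witnessing this dichotomy, will constitute the bulk of the technical work.
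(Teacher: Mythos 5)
Your outline has two genuine gaps, one in each half of the argument. First, the poset you propose -- countable partial $\Gamma_4$-colorings ordered by reverse inclusion -- does not admit the amalgamation your plan requires: total colorings are \emph{not} balanced for it. Concretely, fix an infinite co-infinite $a\in V$; in mutually generic extensions $V[G_0],V[G_1]$ choose $x_0,x_1\in V[G_0]$ splitting $a$ and $x_2,x_3\in V[G_1]$ splitting $\gw\setminus a$, and let $p_0$ (resp.\ $p_1$) extend the putative balanced total coloring $c$ by giving $x_0,x_1$ (resp.\ $x_2,x_3$) one and the same color. Each is a legitimate condition below $c$ in your ordering, yet their union contains a monochromatic $\Gamma_4$-edge, so they are incompatible. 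The paper's coloring poset (Definition~\ref{pg4definition}) is engineered exactly against this configuration: domains are countable $\Gamma$-closed sets, and the ordering carries a ``tight'' clause (extensions are injective on $E(\dom(p_0),\Gamma)$-classes with image in a fixed Borel ideal $\mathcal{I}$) and a ``slick'' clause; in the example above $x_0,x_1$ are $E(\power(\gw)\cap V,\Gamma)$-related, so the tight clause forbids coloring them alike. Without some such device there is no compatibility criterion like Proposition~\ref{ciciproposition}, hence no balance (which moreover needs CH in the intermediate models) and no centeredness, and your positive half collapses.

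Second, your mechanism for the uncountable chromatic number of $\Gamma_5$ -- ``two mutually generic extensions below the same balanced condition'' plus an unformulated ``failure of 5-partition amalgamation'' -- does not fit the actual geometry. The five pieces of a generic $5$-partition do \emph{not} generate mutually generic extensions: any four of them compute the fifth modulo finite, and unions of subfamilies are shared between the models. This is precisely why the paper introduces the weaker perpendicularity notion of mutual transcendence (Section~\ref{pairsection}), shows that the coordinate projections of the Cohen poset on tuples of pairwise disjoint sets are mutually transcendental (Example~\ref{dexample}), demands balance with respect to transcendental pairs, and then in Theorem~\ref{gammatheorem} amalgamates the five derived conditions four at a time by recursion, finally invoking the $5,4$-centeredness of the coloring poset (secured by the slick clause; nothing of the sort is available for your poset) to obtain a single condition forcing a $5$-term mod-finite partition into a nonmeager color class. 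Your heuristic that a $4$-partition factors into two $2$-partitions while a $5$-partition does not is not the operative dichotomy here (it is closer in spirit to the $\Theta_4$ argument of Theorem~\ref{thetatheorem}), and as stated it is not a lemma and gives no route to producing the required monochromatic $5$-partition. In short, the separation is driven by $n{+}1,n$-centeredness together with transcendental balance of a carefully designed poset, not by a mutual-genericity amalgamation property of the naive forcing.
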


\begin{theorem}
\label{4theorem}
It is consistent relative to an inaccessible cardinal that ZF+DC holds, the chromatic number of $\Gamma_4$ is countable, yet the chromatic number of $\Theta_4$ is uncountable.
\end{theorem}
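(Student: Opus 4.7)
The plan is to follow the balanced-forcing template from Zapletal's geometric set theory, since Theorems 2--4 all have the same shape: add a generic countable coloring for the easier hypergraph via a carefully chosen $\gs$-closed forcing, and then argue that the finer combinatorial structure of the harder hypergraph defeats all colorings in the resulting symmetric model. Let $\kappa$ be inaccessible.

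\medskip
\noindent\textbf{Step 1: the coloring forcing $P$.} I would define $P$ to consist of countable partial functions $p:\power(\gw)\to\gw$ such that, whenever $(a_0,a_1,a_2,a_3)\in\dom(p)^4$ is a $\Gamma_4$-edge, the values $p(a_0),\dots,p(a_3)$ are not all equal; conditions are ordered by reverse inclusion. Then $P$ is $\gs$-closed, and the main technical lemma to prove is that $P$ is balanced in the sense of the book, with balanced virtual conditions indexed by total functions $\power(\gw)\to\gw$ that already color $\Gamma_4$ on a fixed ground-model algebra. The amalgamation step exploits the rigidity of $\Gamma_4$-edges: because a partition is destroyed by complementing any single piece, two balanced conditions whose domains intersect only in ground-model sets can be glued along a common extension by choosing fresh values on the new sets.

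\medskip
\noindent\textbf{Step 2: the symmetric model and the $\Gamma_4$-coloring.} I would force with $P\times\colk$ over $V$ and pass to the corresponding symmetric/derived submodel $W$ of Solovay type, in which every set is definable from the $P$-generic $c=\bigcup G_P$, a real parameter, and a countable sequence of ordinals. Standard arguments give $W\models\mathrm{ZF}+\mathrm{DC}$, and $c:\power(\gw)\to\gw$ is a countable coloring of $\Gamma_4$ in $W$.

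\medskip
\noindent\textbf{Step 3: $\Theta_4$ has uncountable chromatic number in $W$.} Suppose, toward contradiction, that $d:\power(\gw)\to\gw$ is a $\Theta_4$-coloring in $W$. Then $d$ has a symmetric name $\dot d$ depending on $c$ and a real parameter $z$. In $W$ I would select two subsets $a,b\subseteq\gw$ that are mutually Cohen-generic over the relevant parameters, so $\{a,a^c,b,b^c\}$ consists of four distinct sets and is a $\Theta_4$-edge (its intersection is empty, its union is $\gw$). The key observation is the $\Z_2\times\Z_2$ symmetry: the pair $(a,b)$ and any of $(a^c,b),(a,b^c),(a^c,b^c)$ is in the same generic orbit over $z$, and, crucially, the balance of $P$ is preserved by this action, because complementation of a single set does not affect whether a quadruple is a $\Gamma_4$-edge up to permutation. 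Hence $\dot d$ cannot distinguish $a$ from $a^c$ or $b$ from $b^c$, forcing $d(a)=d(a^c)=d(b)=d(b^c)$, a monochromatic $\Theta_4$-edge, contradiction.

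\medskip
\noindent\textbf{Main obstacle.} The hard step is the balance analysis of $P$ in Step 1 together with the invariance argument of Step 3. One must show precisely that the generic $\Gamma_4$-coloring $c$ is built in a way that is $\Z_2\times\Z_2$-equivariant on pairs $\{a,a^c\},\{b,b^c\}$, so the action lifts to the symmetric model and rules out any $\Theta_4$-coloring. This contrast --- $\Gamma_4$-edges are broken by pointwise complementation while $\Theta_4$-edges of the form $\{a,a^c,b,b^c\}$ are stabilized by it --- is the combinatorial engine of the separation, and verifying that $P$ realizes this distinction at the level of balanced conditions is where the real work lies.
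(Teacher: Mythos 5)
There is a genuine gap, in two places. First, your Step 1 poset (countable partial colorings ordered by reverse inclusion, amalgamated ``by choosing fresh values'') is not balanced in the way you need. Given a balanced candidate $\bar c$ and conditions $p_0\in V[G_0]$, $p_1\in V[G_1]$ extending it, you cannot choose values afterwards --- the values are already assigned --- and the union can genuinely fail to be a coloring: a $\Gamma_4$-edge $\{a_0,a_1,a_2,a_3\}$ can have $a_0,a_1\in V[G_0]\setminus V$ and $a_2,a_3\in V[G_1]\setminus V$ (note $a_2\cup a_3=\gw\setminus(a_0\cup a_1)$ modulo finite lies in both models, but $a_2,a_3$ individually need not lie in $V$), and nothing in the reverse-inclusion ordering prevents $p_0$ and $p_1$ from making this edge monochromatic. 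The paper's coloring poset avoids exactly this by requiring $\Gamma$-closed domains and a finer ordering (the ``tight'' injectivity requirement on $E(\dom(p_0),\Gamma)$-classes relative to a Borel ideal, plus a ``slick'' clause for centeredness), and the amalgamation then rests on Proposition~\ref{equiproposition}, which reduces $E(\dom(p_0),\Gamma)$ on $\dom(p_1)$ to $E(X\cap V,\Gamma)$. Moreover, balance with respect to mutually generic extensions is not the right notion here: the whole point of the paper is the weaker relation of \emph{mutually transcendental} extensions and the corresponding \emph{transcendental balance}, because the quadruples witnessing $\Theta_4$-edges are never produced by fully mutually generic pairs.

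Second, your Step 3 symmetry argument does not go through. From the fact that $a$ and $\gw\setminus a$ lie in the same orbit of a homogeneity group you cannot conclude that a particular function $d$ in the final model satisfies $d(a)=d(\gw\setminus a)$; such invariance holds only for what symmetric conditions force about orbit-invariant statements, not for the values of an arbitrary definable coloring at specific generic points (and indeed many colorings in $W$ separate $a$ from its complement). The paper's route is different: it proves (Theorem~\ref{thetatheorem}) that in transcendentally balanced extensions of the Solovay model every nonmeager subset of $\power(\gw)$ contains a $\Theta_4$-edge, by taking a Cohen-generic quadruple on the compact set $\{x\in\power(\gw)^4\colon\bigcup_i x(i)=\gw,\ \bigcap_i x(i)=0\}$, observing (Example~\ref{thetaexample}) that the two halves generate mutually transcendental --- not mutually generic --- extensions, and then using transcendental balance to amalgamate the four conditions forcing the four sets into the given nonmeager color class. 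Since a countable $\Theta_4$-coloring would have a nonmeager color class, $\Theta_4$ has uncountable chromatic number, while the generic filter of the arity-four coloring poset for the redundant hypergraph $\Gamma_4$ gives the countable $\Gamma_4$-coloring. Your proposal never isolates the transcendence notion, which is the actual engine separating $\Gamma_4$ from $\Theta_4$, so both the balance claim and the impossibility argument remain unproved as stated.
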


\noindent There are many open questions. In particular, I conjecture that it is possible to separate the chromatic numbers of $\Gamma_n$ and $\Gamma_{n+1}$ for any natural number $n\geq 2$, and similarly for $\Theta_n$ and $\Theta_{n+1}$. In general, arities higher than four present problems that I find difficult to overcome.

Architecture of the paper follows the methodology of geometric set theory \cite{z:geometric}. All models for the consistent theories described in the above theorems are generic extensions of the classical choiceless Solovay model \cite[Theorem 26.14]{jech:newset} via a carefully chosen $\gs$-closed Suslin coloring poset. The consistency results are obtained by a precise calibration of amalgamation properties of conditions in the coloring poset, a process reminiscent of certain concerns of model theory. In Section~\ref{pairsection}, I introduce transcendence of pairs of generic extensions, a property weaker than mutual genericity. In Section~\ref{exampleIsection} I provide a number of useful examples of mutually transcendental pairs of generic extensions. In Section~\ref{preservationsection}, I define the notion of transcendental balance for Suslin forcing, and examples of Section~\ref{preservationsection} are used to prove a number of preservation theorems for generic extensions of the Solovay model obtained with transcendentally balanced forcings. Finally, in Section~\ref{exampleIIsection}, I show that many known balanced forcings are transcendentally balanced--for example the posets adding a transcendence basis for a Polish field over a countable subfield. I also build a new supply of coloring posets in arity three and four which are transcendentally balanced. These are in turn used to prove the theorems of this introduction in the beginning of Section~\ref{exampleIIsection}.

Notation of the paper follows \cite{jech:newset}, and in matters of geometric set theory \cite{z:geometric}. In particular, the calculus of virtual conditions in Suslin forcing of Section~\ref{preservationsection} is established in \cite[Chapter 5]{z:geometric}. A \emph{hypergraph} $\Gamma$ on a set $X$ is just a subset of $\power(X)$, its elements are its \emph{hyperedges}. All hypergraphs in this paper have a fixed finite \emph{arity}, meaning that they are subsets of $[X]^n$ for some number $n\geq 2$. A (partial) function $c$ on $X$ is a $\Gamma$-\emph{coloring} if $c$ is not constant on any $\Gamma$-hyperedge. The chromatic number of $\Gamma$ is the smallest cardinal $\kappa$ such that there is a total $\Gamma$-coloring $c\colon X\to\kappa$. In the absence of the axiom of choice, we only discern between various finite values of the chromatic number and then countable and uncountable chromatic number. 

\section{Transcendental pairs of extensions}
\label{pairsection}

The key concept in this paper is a certain perpendicularity notion for pairs of generic extensions, which generalizes mutual genericity.

\begin{definition}
Let $V[G_0], V[G_1]$ be two generic extensions in an ambient generic extension. Say that $V[G_1]$ is \emph{transcendental over} $V[G_0]$ if for every ordinal $\ga$ and every open set $O\subset 2^\ga$ in the model $V[G_1]$, if $2^\ga\cap V\subset O$ then $2^\ga\cap V[G_0]\subset O$. Say that the models $V[G_0]$, $V[G_1]$ are \emph{mutually transcendental} if each of them is transcendental over the other one.
\end{definition}

\noindent Here, the space $2^\ga$ is equipped with the usual compact product topology. For a finite partial function $h$ from $\ga$ to $2$ write $[h]=\{x\in 2^\ga\colon h\subset x\}$ The open set $O\subset 2^\ga$ is then coded in $V[G_1]$ by a set $H\in V[G_1]$ of finite partial functions from $\ga$ to $2$ with the understanding that $O=\bigcup_{h\in H}[h]$, and in this way it is interpreted in $V[G_0][G_1]$. For the general theory of interpretations of topological spaces in generic extensions (unnecessary for this paper) see \cite{z:interpretations}. It is tempting to deal just with the  Cantor space $2^\gw$ instead of its non-metrizable generalizations, but the present definition has a number of small advantages and essentially no disadvantages as compared to the Cantor space treatment. On the other hand, extending the definition to all compact Hausdorff spaces is equivalent to the present form.

I first need to show that the notion of transcendence generalizes mutual genericity and in general behaves well with respect to product forcing. This is the content of the following proposition.

\begin{proposition}
\label{productproposition}
Let $V[G_0], V[G_1]$ be generic extensions such that $V[G_1]$ is transcendental over $V[G_0]$. Let $P_0\in V[G_0]$ and $P_1\in V[G_1]$ be posets. Let $H_0\subset P_0$ and $H_1\subset P_1$ be filters mutually generic over the model $V[G_0][G_1]$. Then $V[G_1][H_1]$ is transcendental over $V[G_0][H_0]$.
\end{proposition}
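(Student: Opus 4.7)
The plan is to reduce the statement to a product-forcing statement over $V[G_0][G_1]$ and then exploit the given transcendence on a cleverly chosen $V[G_1]$-open set. Fix an open $O\subset 2^\alpha$ in $V[G_1][H_1]$ with $V\cap 2^\alpha\subset O$ and a point $x\in V[G_0][H_0]\cap 2^\alpha$; the goal is $x\in O$. Let $\dot x$ be a $P_0$-name in $V[G_0]$ for $x$, and $\dot O,\dot H$ be $P_1$-names in $V[G_1]$ with $\dot O=\bigcup_{h\in\dot H}[h]$. I would pick $q\in H_0$ forcing $\dot x\in 2^{\check\alpha}$ and $p\in H_1$ forcing $V\cap 2^\alpha\subset\dot O$ (the latter by the forcing theorem applied to the true statement in $V[G_1][H_1]$). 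By mutual genericity of $H_0\times H_1$ over $V[G_0][G_1]$, it suffices to show
\[
(q,p)\Vdash_{P_0\times P_1}^{V[G_0][G_1]}\dot x\in\dot O,
\]
since then $x=\dot x[H_0]\in\dot O[H_1]=O$.

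Suppose for contradiction that $(q',p')\leq(q,p)$ forces $\dot x\notin\dot O$. In $V[G_1]$ I would define the open set
\[
O'_{p'}=\bigcup\bigl\{[h]:\exists\,p''\leq p'\text{ in }P_1,\ p''\Vdash_{P_1}^{V[G_1]}\check h\in\dot H\bigr\}.
\]
A density argument in $P_1$ below $p'$, using $p'\Vdash V\cap 2^\alpha\subset\dot O$, shows $V\cap 2^\alpha\subset O'_{p'}$. Transcendence of $V[G_1]$ over $V[G_0]$ then upgrades this to $V[G_0]\cap 2^\alpha\subset O'_{p'}$.

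To convert information about $V[G_0]$-points into information about the $P_0$-name $\dot x$, I would construct inside $V[G_0]$ a function $y\in 2^\alpha$ \emph{compatible with $\dot x$ below $q'$}, meaning every finite $h\subset y$ admits some $q''\leq q'$ with $q''\Vdash_{P_0}\check h\subset\dot x$. For countable $\alpha$ this is a DC-recursion: enumerate $\alpha=\{\beta_n:n<\omega\}$ and build a descending sequence $q'\geq q^0\geq q^1\geq\cdots$ in $P_0$ with $q^n$ deciding $\dot x(\beta_0),\dots,\dot x(\beta_{n-1})$, recording the decisions in $y$. Applying $V[G_0]\cap 2^\alpha\subset O'_{p'}$ to $y$ yields a finite $h_0\subset y$ and $p''\leq p'$ with $p''\Vdash\check h_0\in\dot H$; compatibility supplies $q''\leq q'$ with $q''\Vdash\check h_0\subset\dot x$. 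Then $(q'',p'')\leq(q',p')$ forces $\check h_0\subset\dot x\wedge\check h_0\in\dot H$, hence forces $\dot x\in\dot O$, contradicting the choice of $(q',p')$.

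The main obstacle I anticipate is producing the compatible $y$ for uncountable $\alpha$: the tree of $\dot x$-compatible finite functions has arbitrarily high levels in $V[G_0]$ but can fail to admit a cofinal branch there, even though $\dot x[H_0]$ is such a branch in the extension. Resolving this cleanly probably requires either a mild closure hypothesis on $P_0$ to run the transfinite recursion at limit stages, or applying transcendence to a larger coding space (e.g., subsets of $P_0\times[\alpha]^{<\omega}$ encoding the full decision structure of $\dot x$), so that a witness can be extracted without preselecting a single branch in $V[G_0]$.
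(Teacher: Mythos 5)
Your overall strategy is the same as the paper's: reduce to the product $P_0\times P_1$ over $V[G_0][G_1]$, use a density argument on the $P_1$ side to produce an open set coded in $V[G_1]$ covering $2^\alpha\cap V$, upgrade it to cover $2^\alpha\cap V[G_0]$ by transcendence, and then hit it with a point of $V[G_0]$ that is ``compatible'' with the $P_0$-name $\dot x$ below the given condition; the final splicing step producing $(q'',p'')$ forcing $\check h_0\subset\dot x$ and $\check h_0\in\dot H$ is fine. The genuine gap is exactly the one you flag yourself: producing the compatible point $y\in 2^\alpha\cap V[G_0]$ when $\alpha$ is uncountable. This case cannot be set aside, since the definition of transcendence quantifies over all ordinals $\alpha$ and the applications use the non-metrizable spaces $2^\alpha$ essentially; and neither of your proposed repairs works as stated: a closure hypothesis on $P_0$ is not available (the proposition is stated for, and applied to, arbitrary posets), and the ``larger coding space'' idea is left entirely unexecuted.

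The paper closes precisely this step with a short compactness argument that makes your recursion, and hence the countable/uncountable distinction, unnecessary. Working in $V[G_0]$, let $A_0$ be the set of all finite partial functions $h\colon\alpha\to 2$ such that $q'\Vdash_{P_0}\check h\not\subset\dot x$. If $\bigcup_{h\in A_0}[h]$ covered $2^\alpha$, then by compactness of $2^\alpha$ finitely many sets $[h]$ with $h\in A_0$ would cover it, which is absurd: in any generic extension through $q'$ the point $\dot x$ must extend one of those finitely many $h$, each of which is forced by $q'$ not to be a fragment of $\dot x$. So there is a point $y\in 2^\alpha\cap V[G_0]$ outside $\bigcup_{h\in A_0}[h]$, and by the definition of $A_0$ every finite $h\subset y$ admits some $q''\leq q'$ with $q''\Vdash\check h\subset\dot x$ --- exactly your compatibility property, for arbitrary $\alpha$ and with no hypothesis on $P_0$. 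Substituting this for your recursion, the rest of your argument goes through.
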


\begin{proof}
Work in the model $V[G_0, G_1]$ and consider the product forcing $P_0\times P_1$. Let $\ga$ be an ordinal. Let $\langle p_0, p_1\rangle\in P_0\times P_1$ be a condition, let $\tau\in V[G_1]$ be a $P_1$-name for an open set such that $p_1\Vdash 2^\ga\cap V\subset\dot O$, and let $\eta\in V[G_0]$ be a $P_0$-name for an element of $2^\ga$. We need to find a stronger condition in the product which forces $\eta\in\tau$.

First, work in the model $V[G_0]$. Let $A_0=\{h\colon h$ is a finite partial function from $\ga$ to $2$ and $p_0\Vdash\check h\not\subset\eta\}$. By a compactness argument, the set $\bigcup_{h\in A_0}[h]$ must not cover the whole space $2^\ga$; if it did, a finite subset of $Q_0$ would suffice to cover $2^\ga$ and there would be no space left for the point $\eta$. Let $y\in 2^\ga\setminus\bigcup_{h\in A_0}[h]$ be an arbitrary point. Now work in the model $V[G_1]$ and let $A_1=\{h\colon h$ is a finite partial function from $\ga$ to $2$ and there is a condition $q\leq p_0$ such that $q\Vdash [h]\subset\tau\}$; since $p_0\Vdash 2^\ga\cap V\subset\tau$, it must be the case that $2^\ga\cap V\subset \bigcup_{h\in A_1}[h]$. 

Now, use the transcendence of $V[G_1]$ over $V[G_0]$  to argue that $y\in Q$. It follows that there must be a finite partial function $h\in A_1$ such that $h\subset y$. It follows that there must be conditions $p'_0\leq p_0$ and $p'_1\leq p_1$ such that $p'_0\Vdash\check h\subset\eta$ and $p'_1\Vdash [h]\subset\tau$. Then the condition $\langle p'_0, p'_1\rangle$ forces in the product $P_0\times P_1$ that $\eta\in\tau$ as required.
\end{proof}

\begin{corollary}
Mutually generic extensions are mutually transcendental.
\end{corollary}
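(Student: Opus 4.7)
The strategy is to derive the corollary as an immediate application of Proposition \ref{productproposition} with trivial base extensions. Suppose $P_0, P_1 \in V$ are posets and $H_0 \subset P_0$, $H_1 \subset P_1$ are $V$-generic filters that are mutually $V$-generic. In the notation of the proposition, set $V[G_0] = V[G_1] = V$ with the ambient extension being $V[H_0][H_1]$. Then $V[G_0][G_1] = V$, and the mutual genericity hypothesis on $H_0, H_1$ over $V[G_0][G_1]$ reduces to the standing assumption that $H_0, H_1$ are mutually $V$-generic.

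The transcendence hypothesis of Proposition \ref{productproposition} then collapses to the trivial claim that $V$ is transcendental over $V$: given any ordinal $\ga$ and any open set $O \subset 2^\ga$ coded in $V$ with $2^\ga \cap V \subset O$, the required conclusion $2^\ga \cap V \subset O$ is a tautology. Applying the proposition yields that $V[H_1]$ is transcendental over $V[H_0]$. Swapping the roles of the two sides in the same argument delivers the transcendence of $V[H_0]$ over $V[H_1]$, so the extensions are mutually transcendental. There is no genuine obstacle here; all the content has already been absorbed into the proof of Proposition \ref{productproposition}.
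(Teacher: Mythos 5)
Your argument is correct and is exactly the paper's proof: the paper also obtains the corollary by setting $V[G_0]=V[G_1]=V$ in Proposition~\ref{productproposition}, with the transcendence hypothesis becoming trivial. You merely spell out the symmetry step explicitly, which the paper leaves implicit.
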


\begin{proof}
Just let $V[G_0]=V[G_1]=V$ in Proposition~\ref{productproposition}.
\end{proof}

\noindent In the remainder of this section, I isolate several properties of mutually transcendental extensions which will come handy later.

\begin{proposition}
\label{iproposition}
Let $V[G_0], V[G_1]$ be mutually transcendental generic extensions of $V$. Then $V[G_0]\cap V[G_1]=V$.
\end{proposition}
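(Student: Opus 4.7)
The plan is a short $\in$-induction reducing the claim to the case when the element of $V[G_0]\cap V[G_1]$ is a set of ordinals, combined with a topological argument that plays the singleton $\{y\}$ against the definition of transcendence.

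For the reduction, let $x\in V[G_0]\cap V[G_1]$ and suppose by $\in$-induction that every element of $x$ already lies in $V$, so $x\subseteq V$. Working inside $V$, pick an ordinal $\lambda$ with $x\in V_\lambda$ and a bijection $\pi\colon V_\lambda\to\mu$ for some ordinal $\mu$. The image $y=\pi[x]\subseteq\mu$, viewed as a point of $2^\mu$, then lies in $V[G_0]\cap V[G_1]$ because $\pi\in V$, and $x\in V$ if and only if $y\in V$. So it suffices to show $2^\ga\cap V[G_0]\cap V[G_1]\subseteq V$ for every ordinal $\ga$.

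For the topological core, suppose toward a contradiction that $y\in 2^\ga\cap V[G_0]\cap V[G_1]$ lies outside $V$. Since $2^\ga$ in the product topology is Hausdorff, the singleton $\{y\}$ is closed, and its complement $O:=2^\ga\setminus\{y\}$ is open, coded in $V[G_1]$ by the set of all finite partial functions $h$ from $\ga$ to $2$ with $h\not\subset y$. Because $y\notin V$, every $z\in 2^\ga\cap V$ differs from $y$ at some coordinate and hence falls into $O$, giving $2^\ga\cap V\subseteq O$. The definition of transcendence of $V[G_1]$ over $V[G_0]$ then yields $2^\ga\cap V[G_0]\subseteq O$, contradicting $y\in 2^\ga\cap V[G_0]\setminus O$.

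The only mild obstacle is the coding step, which needs a bijection of some $V_\lambda$ with an ordinal chosen inside $V$ so that it lives simultaneously in $V$, $V[G_0]$, and $V[G_1]$; once this is in place the topological argument is immediate. Observe also that only transcendence of $V[G_1]$ over $V[G_0]$ is actually used, so the conclusion $V[G_0]\cap V[G_1]=V$ already follows from this one-sided hypothesis.
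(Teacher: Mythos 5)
Your proof is correct and is essentially the paper's own argument: both reduce the statement to points of $2^\ga$ and then play the open set $2^\ga\setminus\{y\}$, which is coded in one of the two extensions and covers $2^\ga\cap V$, against the transcendence property to exclude $y$ from the other extension. The only differences are cosmetic — you spell out the routine reduction to sets of ordinals (which the paper leaves implicit) and observe, correctly, that one direction of transcendence already suffices.
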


\begin{proof}
It will be enough to show that $(2^\ga\cap V[G_0])\cap (2^\ga\cap V[G_1])=2^\ga\cap V$ holds for every ordinal $\ga$. Let $x\in 2^\ga\cap V[G_0]\setminus V$ be an arbitrary point. The open set $O=2^\ga\setminus \{x\}$ in $V[G_0]$ covers $2^\ga\cap V$. By the mutual transcendence, $2^\ga\cap V[G_1]\subset O$ must hold as well. In particular, $x\notin V[G_1]$ as required.
\end{proof}

\begin{proposition}
Let $V[G_0], V[G_1]$ be mutually transcendental generic extensions of $V$. Let $X_0, X_1$ be Polish spaces and $C\subset X_0\times X_1$ be a $K_\gs$-set. Let $x_0\in X_0\cap V[G_0]$ and $x_1\in X_1\cap V[G_1]$ be points such that $\langle x_0, x_1\rangle\in C$. Then there is a point $x'_0\in X_0\cap V$ such that $\langle x'_0, x_1\rangle\in C$.
\end{proposition}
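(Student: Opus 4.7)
The plan is a contradiction argument in the spirit of Proposition~\ref{productproposition}, with the compactness of a $K_\gs$-set in place of the finite-cover step there.

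First I would reduce to a single compact set: since $C$ is $K_\gs$ and lives in $V$, write $C=\bigcup_n K_n$ with each $K_n\in V$ compact, fix $n$ with $\langle x_0,x_1\rangle\in K_n$, and set $K=K_n$. Form the vertical slice $K_{x_1}=\{y\in X_0:\langle y,x_1\rangle\in K\}$, a compact subset of $X_0$ lying in $V[G_1]$, as the preimage of $\{x_1\}$ under the continuous projection $K\to X_1$. It will then suffice to produce a single point of $K_{x_1}\cap V$, since any such witnesses the conclusion.

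I would then argue by contradiction. Suppose $K_{x_1}\cap V=\emptyset$, and fix in $V$ a topological embedding of $X_0$ into a compact Hausdorff space $Y$, say the Hilbert cube $[0,1]^\gw$. Since $K_{x_1}$ is compact, it is closed in $Y$, so $O=Y\setminus K_{x_1}$ is an open subset of $Y$ coded in $V[G_1]$; the contradictory assumption gives $Y\cap V\subset O$. Applying the transcendence of $V[G_1]$ over $V[G_0]$ in its compact Hausdorff form (noted to be equivalent to the stated one in the remark following the definition of transcendence) then yields $Y\cap V[G_0]\subset O$, contradicting $x_0\in K_{x_1}\cap V[G_0]$.

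The only real difficulty is invoking this compact Hausdorff form. I would justify it in $V$ by fixing a continuous surjection $\pi\colon 2^\ga\to Y$ with $\ga$ at least the weight of $Y$, together with a section of $\pi$ (available by AC in $V$): the preimage $\pi^{-1}[O]\subset 2^\ga$ is open in $V[G_1]$ and covers $2^\ga\cap V$, the stated $2^\ga$ form of transcendence upgrades this to $2^\ga\cap V[G_0]\subset\pi^{-1}[O]$, and composing with the section transports the conclusion back to $Y\cap V[G_0]\subset O$. With that piece in place, everything else in the plan is routine.
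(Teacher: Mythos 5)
Your overall strategy is sound and close to the paper's: both proofs reduce to a single compact set $K\subset C$ coded in $V$ containing $\langle x_0,x_1\rangle$ and then apply transcendence of $V[G_1]$ over $V[G_0]$ through a Cantor-space surrogate. The paper does this directly, fixing in $V$ a continuous surjection $h\colon 2^\omega\to\mathrm{proj}_{X_0}(K)$, letting $O=\{y\in 2^\omega\colon \langle h(y),x_1\rangle\notin K\}\in V[G_1]$, observing that $O$ misses a point of $2^\omega\cap V[G_0]$ (any point of the nonempty closed set $h^{-1}\{x_0\}$, which is coded in $V[G_0]$), and applying transcendence in contrapositive form to get $y\in 2^\omega\cap V\setminus O$, whence $x_0'=h(y)$ works. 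Your route through the Hilbert cube and the ``compact Hausdorff form'' of transcendence is a legitimate variant of the same idea, but the way you justify the transfer step fails as written. The section $s$ of $\pi\colon 2^\omega\to Y$ chosen by AC in $V$ is a function whose domain is $Y$ as computed in $V$, i.e.\ $Y\cap V$; it is simply undefined on the new points of $Y\cap V[G_0]$, and the only point you actually need to transport, namely the image of $x_0$, is exactly such a new point (if $x_0\in V$ there is nothing to prove). So ``composing with the section'' does not yield $Y\cap V[G_0]\subset O$.

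The gap is repairable by the same device the paper uses implicitly: you do not need a global section, only a $\pi$-preimage of $e(x_0)$ inside $V[G_0]$. The set $\pi^{-1}\{e(x_0)\}\subset 2^\omega$ is a nonempty closed set coded in $V[G_0]$ (nonemptiness by the explicit binary-expansion preimage in the Hilbert cube case, or by compactness/absoluteness in general), so it contains a point $z\in 2^\omega\cap V[G_0]$; then $2^\omega\cap V[G_0]\subset\pi^{-1}[O]$ gives $e(x_0)=\pi(z)\in O$, the desired contradiction. You should also note (one sentence suffices) that the inclusion $Y\cap V\subset O$ uses analytic absoluteness for the embedding $e$: a ground model point of $Y$ lying in $e''K_{x_1}$ would pull back, by $\Sigma^1_1$-absoluteness and injectivity of $e$, to a point of $K_{x_1}\cap V$, contradicting the assumption. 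With these two repairs your argument is correct; as it stands, the section step is the genuine flaw.
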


\begin{proof}
Since $C$ is a countable union of compact sets, there is a compact set $K\subset C$ coded in $V$ such that $\langle x_0, x_1\rangle\in K$. Let $h\colon\cantor\to X_0$ be a continuous function onto the compact projection of the set $K$ to $X_0$. Let $O\subset\cantor$ in $V[G_1]$ be the open set of all points $y\in\cantor$ such that $\langle f(y), x_1\rangle\notin K$. The set $O$ does not cover $\cantor\cap V[G_0]$ since $h^{-1}x_0\cap O=0$. By the mutual transcendence, there must be a point $y\in\cantor\cap V\setminus O$. Let $x'_0=h(y)$ and observe that the point $x'_0\in X_0$ works as desired.
\end{proof}

\begin{corollary}
\label{jcorollary}
Let $X$ be a $K_\gs$ Polish field. Let $p(\bar v_0, \bar v_1)$ be a multivariate polynomial with coefficents in $X$ and variables $\bar v_0, \bar v_1$. Let $V[G_0], V[G_1]$ be mutually transcendental generic extensions of $V$ and let $\bar x_0\in V[G_0]$, $\bar x_1\in V[G_1]$ be strings of elements of $X$ such that $p(\bar x_0, \bar x_1)=0$. Then there is a string $\bar x_0'\in V$ arbitrarily close to $\bar x_0$ such that $p(\bar x_0', \bar x_1)=0$.
\end{corollary}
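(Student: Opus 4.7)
The plan is to deduce the corollary from the preceding proposition by carving out a $K_\gs$ target set that simultaneously encodes the algebraic constraint $p = 0$ and the proximity constraint on the first coordinate. The zero set of a polynomial with coefficients in $X$ is closed in $X^{|\bar v_0|}\times X^{|\bar v_1|}$, and since $X$ is $K_\gs$ Polish, so is every finite power of $X$, and every closed subset of a $K_\gs$ space is itself $K_\gs$. So the zero set of $p$ is already a $K_\gs$ set, coded in $V$.

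To enforce the proximity requirement, I would first fix an arbitrary open neighborhood $U\subset X^{|\bar v_0|}$ of $\bar x_0$ coded in $V$. Because $X$ is a Polish (hence regular) space, I can shrink $U$ to obtain a closed neighborhood $F\subset U$ with $\bar x_0\in F$ (for instance, the closed ball of small enough radius around a $V$-point close to $\bar x_0$, or by taking the closure of a smaller basic open set). Then I set
\[
C=\{\langle\bar y_0,\bar y_1\rangle\in F\times X^{|\bar v_1|}:p(\bar y_0,\bar y_1)=0\}.
\]
This is closed in the $K_\gs$ space $F\times X^{|\bar v_1|}$, hence $K_\gs$, and it is coded in $V$.

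By construction $\langle\bar x_0,\bar x_1\rangle\in C$, with $\bar x_0\in V[G_0]$ and $\bar x_1\in V[G_1]$. Applying the preceding proposition to $X_0=X^{|\bar v_0|}$, $X_1=X^{|\bar v_1|}$ and this set $C$ yields a string $\bar x_0'\in X_0\cap V$ with $\langle\bar x_0',\bar x_1\rangle\in C$, which unpacks to $\bar x_0'\in F\subset U$ and $p(\bar x_0',\bar x_1)=0$, as desired.

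I do not anticipate a real obstacle here; the only thing to get right is the verification that the chosen slice $C$ is $K_\gs$, which is where the hypothesis that $X$ is $K_\gs$ is essential (without it the slice $F\times X^{|\bar v_1|}$ need not be $K_\gs$, and the previous proposition would not apply directly).
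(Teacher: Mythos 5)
Your proof is correct and takes essentially the same route as the paper: apply the preceding proposition to the zero set of $p$, using that finite powers of a $K_\sigma$ Polish space are $K_\sigma$ and that polynomial zero sets are closed. Your only addition is to make the ``arbitrarily close'' clause explicit by intersecting with a $V$-coded closed neighborhood of $\bar x_0$, a detail the paper's one-line proof leaves implicit; that step is fine, since $V$-coded neighborhoods of $\bar x_0$ of arbitrarily small diameter exist and closed subsets of $K_\sigma$ spaces remain $K_\sigma$.
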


\begin{proof}
Apply the proposition with the additional insight that the spaces ${X}^n$ for any natural number $n$ are $K_\gs$ and solutions to a given polynomial form a closed set.
\end{proof}

\begin{corollary}
\label{kscorollary}
Let $E$ be a $K_\gs$-equivalence relation on a Polish space $X$. Whenever $V[G_0], V[G_1]$ are mutually transcendental generic extensions of $V$ and $x_0\in X\cap V[G_0]$ and $x_1\in X\cap V[G_1]$ are $E$-related points, then there is a point $x\in X\cap V$ $E$-related to them both.
\end{corollary}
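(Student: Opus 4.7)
The plan is to apply the preceding proposition directly to the $K_\gs$-set $C=E\subset X\times X$, taking $X_0=X_1=X$. The hypotheses of the proposition are satisfied: $E$ is a $K_\gs$ subset of the product $X\times X$, and the pair $\langle x_0,x_1\rangle$ with $x_0\in V[G_0]$, $x_1\in V[G_1]$ lies in $C$ by assumption. The proposition then delivers a point $x\in X\cap V$ such that $\langle x,x_1\rangle\in E$.

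The only remaining point is to upgrade this to $E$-relatedness to \emph{both} $x_0$ and $x_1$. This is where I would invoke the fact that $E$ is an equivalence relation: since $x\mathrel E x_1$ and $x_0\mathrel E x_1$, transitivity and symmetry give $x\mathrel E x_0$ as well. Thus $x$ serves as the common $V$-witness.

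There is no real obstacle here; the entire content is packaging the previous proposition for the special symmetric case. One could note that the $K_\gs$ hypothesis is essential in invoking the previous proposition (it is what enables the compactness argument underlying transcendence), whereas the equivalence relation hypothesis is only used at the very last step to convert one-sided relatedness into two-sided relatedness.
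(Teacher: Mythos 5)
Your argument is correct and is exactly the intended one: the paper states this as an immediate corollary of the preceding proposition (with $X_0=X_1=X$, $C=E$), and the upgrade from $x\mathrel{E}x_1$ to $E$-relatedness with both points is precisely the symmetry-and-transitivity step you describe. Nothing further is needed.
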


\noindent The last corollary can be generalized to some non-$K_\gs$-equivalence relations as in the following proposition.

\begin{proposition}
\label{metricproposition}
Let $\langle U_n, d_n\colon n\in\gw\rangle$ be a sequence of sets and metrics on each and let $X=\prod_nU_n$. Let $V[G_0], V[G_1]$ be mutually transcendental generic extensions of $V$ and $x_0\in X\cap V[G_0]$ and $x_1\in X\cap V[G_1]$ be points such that $\lim_n d_n(x_0(n), x_1(n))=0$. Then there is a point $x\in X\cap V$ such that $\lim_n d_n(x(n), x_0(n))=0$.
\end{proposition}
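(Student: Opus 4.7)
My plan is first to construct a $V[G_0]$-sequence $y$ approximating $x_0$ at the $V$-rate $2^{-n}$, and then apply transcendence of $V[G_0]$ over $V[G_1]$ to a closed set built from $y$. Since $U_n\in V$ and $V$ is transitive, every $u\in U_n$ is itself an element of $V$; fix a $V$-well-ordering of each $U_n$ and let $y(n)\in U_n$ be the $V$-least element satisfying $d_n(y(n),x_0(n))\le 2^{-n}$. This is well-defined because $x_0(n)$ itself witnesses nonemptiness, so $y=\langle y(n):n\in\gw\rangle\in V[G_0]$ and $d_n(y(n),x_0(n))\le 2^{-n}\to 0$. By the triangle inequality and the hypothesis, $d_n(y(n),x_1(n))\to 0$; it therefore suffices to produce $x\in X\cap V$ with $d_n(x(n),y(n))\to 0$, since then $d_n(x(n),x_0(n))\le d_n(x(n),y(n))+2^{-n}\to 0$ as well.

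For each $k\ge 1$ and each $N\in\gw$, the set $D_{k,N}=\{z\in X:\forall n\ge N,\,d_n(z(n),y(n))\le 2/k\}$ is closed and lies in $V[G_0]$, since both of its parameters $y$ and $N$ lie in $V[G_0]$. Let $M_k\in\gw$ be the least natural number with $d_n(y(n),x_1(n))\le 1/k$ for every $n\ge M_k$; then $x_1\in D_{k,M_k}$, so $V[G_1]\cap D_{k,M_k}\neq\emptyset$, and transcendence of $V[G_0]$ over $V[G_1]$ applied to $D_{k,M_k}$ yields $x^k\in V\cap D_{k,M_k}$: a $V$-sequence that is $(2/k)$-tail-close to $y$.

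The main obstacle is to aggregate the countably many witnesses $\langle x^k:k\in\gw\rangle$ into a single $V$-sequence $x\in V$ asymptotically close to $y$, since the thresholds $M_k$ depend on $x_1$ and lie only in $V[G_1]$: any canonical choice of $x^k$ (for instance the $V$-least member of $V\cap D_{k,M_k}$) produces a sequence $\langle x^k\rangle$ living in $V[G_1]$, not in $V$, and so does any naive diagonal. My approach is to apply transcendence just once, to the closed subset
\[
E=\{(z,f)\in X\times\gw^\gw:\forall k\,\forall n\ge f(k),\,d_n(z(n),y(n))\le 2/k\}
\]
of $X\times\gw^\gw$, coded in $V[G_0]$. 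The pair $(x_1,\langle M_k\rangle)$ lies in $E\cap V[G_0,G_1]$, and the delicate step, which I expect to carry most of the work, is promoting this joint witness to one in $V[G_1]\cap E$---namely finding $h\in\gw^\gw\cap V[G_1]$ with $h(k)\ge M_k$ for every $k$, so that $(x_1,h)\in V[G_1]\cap E$---by exploiting the uniform $V$-rate $2^{-n}$ of $y$-closeness to $x_0$ to express a suitable bound for $M_k$ using only $x_1$ and $V$-data. Once such $(x_1,h)\in V[G_1]\cap E$ is in hand, transcendence of $V[G_0]$ over $V[G_1]$ applied to $E$ yields $(x,f)\in V\cap E$, and the resulting $x\in V$ satisfies $d_n(x(n),y(n))\to 0$ as required.
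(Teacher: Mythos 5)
Your plan founders on the scope of the transcendence definition. Transcendence, as defined, only speaks about open (equivalently closed) subsets of the compact spaces $2^\ga$ — the paper notes it extends to compact Hausdorff spaces, but no further. Your sets $D_{k,N}\subset X=\prod_nU_n$ and $E\subset X\times\gw^\gw$ are closed subsets of non-compact spaces: the "balls" $\{u\in U_n\colon d_n(u,y(n))\le 2/k\}$ are arbitrary metric balls, and $\gw^\gw$ is certainly not compact. There is no presentation of these sets as closed subsets of some $2^\ga$ under which the $V[G_1]$-witness and prospective $V$-witnesses correspond (the natural coding of $z\in X$ by its graph inside $2^{\{(n,u)\colon u\in U_n\}}$ has non-closed image, its closure containing "partial" points). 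So the step ``transcendence of $V[G_0]$ over $V[G_1]$ applied to $D_{k,M_k}$ yields $x^k\in V\cap D_{k,M_k}$'' is not licensed, and the same objection applies to $E$. For \emph{mutually generic} extensions such a non-compact transfer can be proved by a tree-absoluteness argument, but for merely mutually transcendental extensions it is precisely what one may not assume; the paper's proof exists to get around this, by encoding the relevant comparison data in the compact selector space $Z\cong 2^A$ over the set $A$ of far-apart pairs, pulling a selector $z'$ back into $V$, and only then extracting small-diameter sets $B_n\ni x_0(n)$ from which $V$-approximants are chosen.

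The aggregation step you yourself flag as delicate is also a genuine obstruction, not a finishing touch. The thresholds $M_k$ record the convergence rate of $d_n(y(n),x_1(n))$, and since $y$ — as a sequence, unlike its individual values — lies only in $V[G_0]$ (your step 1 gains nothing: each $x_0(n)$ is already an element of $V$, and canonizing the values does not move the sequence any closer to $V$), these rates carry $V[G_0]$-information. There is no reason $V[G_1]$ should contain $h$ with $h(k)\ge M_k$ for all $k$; for instance, when $V[G_0]$ adds a real unbounded over $V[G_1]$ (as Cohen-type extensions do), no such domination is available from $x_1$ and $V$-data alone. The paper aggregates differently: having produced, for each $m$, some $y_m\in X\cap V$ with $\limsup_n d_n(y_m(n),x_1(n))\le 2^{-m}$, it observes that the set $B$ of such pairs $\langle m,y\rangle$ with $y\in X\cap V$ is definable from $x_1$ and equally from $x_0$, hence lies in $V[G_0]\cap V[G_1]=V$ by Proposition~\ref{iproposition}, and then finds a single $x\in V$ by an absoluteness argument. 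Any repair of your plan would have to replace both uses of transcendence on non-compact sets by some such compact coding plus the intersection trick — at which point you have essentially reconstructed the paper's proof.
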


\begin{proof}
First argue that for every number $m\in\gw$ there is a point $y\in X\cap V$ such that $\forall n\ d_n(y(n), x_1(n))\leq 2^{-m}$. To see this, fix a number $k\in\gw$ such that for all $n\geq k$, $d_n(x_0(n), x_1(n))<2^{-m-2}$. Let $A=\{ \{u, v\}\colon \exists n\geq k\ u, v\in U_n$ and $d_n(u,v)>2^{-m}\}$, and consider the space $Z$ of all selectors on $A$, which is naturally homeomorphic to $2^A$. In the model $V[G_0]$, let $O=\{z\in Z\colon\exists n\geq k\ \exists v\in U_n\ d_n(x_0(n), v)>2^{-m}$ and $z(x_0(n), v)=v\}$. This is an open subset of the space $Z$. It does not cover $Z\cap V[G_1]$ as in the model $V[G_1]$, one can find a selector $z\in Z$ such that for all $n\geq k$ and all $\{u, v\}\in Z$ with $u, v\in U_n$, $z(u, v)$ is one of the points $u, v$ which is not $d_n$-farther from $x_1(n)$ than the other. It is immediate from the definition of the set $O$ and a triangle inequality argument that $z\notin O$. By a mutual transcendence argument, there is a selector $z'\in Z\cap V$ such that $z'\notin O$ holds. 

Work in $V$. For each number $n\geq k$, let $B_n=\{u\in U_n\colon\forall v\in Y_n\ d_n(u, v)>2^{-m}\to z'(u, v)=u\}$. The set $B_n$ contains $x_0(n)$ by the choice of the selector $z'$. Moreover, for any two elements $u, v\in B_n$, $d_n(u, v)\leq 2^{-m}$ must hold: in the opposite case, the selector $z'$ could not choose one element from the pair $\{u, v\}$ without contradicting the definition of the set $B_n$. Now consider any point $y\in X$ such that for all $n<k$, $y(n)=x_0(k)$ and for all $n\geq k$ $y(n)\in B_n$. Then $\forall n\ d_n(y(n), x_1(n))\leq 2^{-m}$ as desired.

Now, let $C=\gw\times (X\cap V)$ and consider the set $B\subset C$ of all pairs $\langle m, y\rangle\in A$ such that $\limsup_nd_n(y(n), x_1(n))\leq 2^{-m}$. As written, the set belongs to $V[G_1]$; however, it also belongs to $V[G_0]$ since replacing $x_1$ in its definition with $x_0$ results in the same set by the initial assumptions on $x_0, x_1$. By Proposition~\ref{iproposition}, $B\in V$ holds. By the work in the previous paragraph, for each $m\in\gw$ $B$ contains some element whose first coordinate is $m$. Thus, in $V$ there exists a sequence $\langle y_m\colon m\in\gw\rangle$ such that $\forall m\ \langle m, y_m\rangle\in B$. By a Mostowski absoluteness argument, there must be in $V$ a point $x$ such that for all $m\in\gw$, $\limsup_n(y_m(n), x(n))\leq 2^{-m}$, since such a point, namely $x_0$, exists in $V[G_0]$. A triangular inequality argument then shows that $\lim_n(x(n), x_0(n))=0$ as desired. 
\end{proof}

\noindent I do not know whether further generalizations are possible. In particular, the following is open:

\begin{question}
Let $E$ be a pinned Borel equivalence relation on a Polish space $X$. Let $V[G_0], V[G_1]$ be mutually transcendental generic extensions of $V$ and $x_0\in X\cap V[G_0]$ and $x_1\in X\cap V[G_1]$ are $E$-related points. Must there be a point $x\in X\cap V$ $E$-related to them both?
\end{question}

\section{Examples I}
\label{exampleIsection}

In this section, I provide several interesting pairs of mutually transcendental pairs of generic extensions. To set up the notation, for a Polish space $X$, write $P_X$ for the Cohen poset of nonempty open subsets of $X$ ordered by inclusion, with $\dotxgen$ being its name for a generic element of $X$. If $f\colon X\to Y$ is a continuous open map then $P_X$ forces $f(\dotxgen)\in Y$ to be a point generic for $P_Y$ \cite[Proposition 3.1.1]{z:geometric}. The first definition and proposition deal with Cohen elements of Polish spaces.

\begin{definition}
\label{adefinition}
Let $X, Y_0, Y_1$ be compact Polish spaces and $f_0\colon X\to Y_0$ and $f_1\colon X\to Y_1$ be continuous open maps. Say that $f_1$ is \emph{transcendental over} $f_0$ if for every nonempty open set $O\subset X$ there is a point $y_0\in Y_0$ such that the set $f_1''(f_0^{-1}\{y_0\}\cap O)\subset Y_1$ has nonempty interior for every nonempty open set $O\subset X$.
\end{definition}

\begin{proposition}
Suppose that $X, Y_0, Y_1$ are Polish spaces, $X$ is compact, and $f_0\colon X\to Y_0$ and $f_1\colon X\to Y_1$ are continuous open maps. The following are equivalent:

\begin{enumerate}
\item  $f_1$ is transcendental over $f_0$;
\item $P_X$ forces $V[f_1(\dotxgen)]$ to be transcendental over $V[f_0(\dotxgen)]$.
\end{enumerate}
\end{proposition}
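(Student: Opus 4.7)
The plan is to adapt the compactness-based argument of Proposition~\ref{productproposition} to the single Cohen forcing $P_X$, with condition (1) taking on the role played by mutual transcendence in supplying compatibility of conditions.

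For $(1)\Rightarrow(2)$, I fix $O\in P_X$, an ordinal $\ga$, a $P_X$-name $\dot O$ (equivalently, a $P_{Y_1}$-name via $f_1$) for an open subset of $2^\ga$ with $O\Vdash 2^\ga\cap V\subset\dot O$, and a $P_X$-name $\dot x$ (a $P_{Y_0}$-name via $f_0$) for a point of $2^\ga$, and produce a dense set of $O'\leq O$ forcing $\dot x\in\dot O$. Fixing any nonempty open $O_0\leq O$, apply (1) to $O_0$ to obtain $y_0\in Y_0$ and nonempty open $U\subset Y_1$ with $U\subset f_1''(f_0^{-1}\{y_0\}\cap O_0)$. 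An immediate check provides the compatibility statement: for every neighborhood $V_0$ of $y_0$ and every nonempty open $V_1\subset U$, the intersection $f_0^{-1}(V_0)\cap f_1^{-1}(V_1)\cap O_0$ is a nonempty $P_X$-condition. Set
\[
A_0=\{h:O_0\Vdash h\not\subset\dot x\},\qquad A_1=\{h:\exists\text{ nonempty open }V_1\subset U,\ V_1\Vdash_{P_{Y_1}}[h]\subset\dot O\},
\]
both in $V$. A compactness argument as in Proposition~\ref{productproposition} shows $\bigcup_{h\in A_0}[h]\neq 2^\ga$; since $U\subset f_1''O_0$ implies $U\Vdash 2^\ga\cap V\subset\dot O$, one verifies $\bigcup_{h\in A_1}[h]\supset 2^\ga\cap V$.

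The novelty over Proposition~\ref{productproposition} is the selection of a single point $y\in 2^\ga$ outside $\bigcup_{h\in A_0}[h]$ that also supplies compatible witnesses. Applying continuous reading of names to the $P_{Y_0}$-name $\dot x$, let $Z\subset Y_0$ be the dense $G_\delta$ set on which $\dot x$ is realized by a continuous function $g\colon Z\to 2^\ga$. I arrange $y_0\in Z$ by iterating (1) along a chain of shrinking neighborhoods---localizing (1) shows the set of transcendence witnesses is dense in $f_0''O_0$---and then using Baire category inside the CRN domain. Setting $y=g(y_0)\in V\cap 2^\ga$, every finite $h\subset y$ is forced by some neighborhood $V_0$ of $y_0$ to satisfy $h\subset\dot x$, and $y$ lies in no $[h]$ for $h\in A_0$. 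Since $y\in V$, some $h\subset y$ lies in $A_1$ with witness $V_1\subset U$; then $O'=f_0^{-1}(V_0)\cap f_1^{-1}(V_1)\cap O_0$ is a nonempty $P_X$-condition forcing $\dot x\in[h]\subset\dot O$.

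For $(2)\Rightarrow(1)$, I argue contrapositively: assume some nonempty open $O\subset X$ is such that $f_1''(f_0^{-1}\{y_0'\}\cap O)$ has empty interior for every $y_0'\in Y_0$. Below $O$ I construct in $V[f_1(\dotxgen)]$ an open subset $\dot U$ of $Y_0$ (via the compact Hausdorff reformulation of transcendence) covering $Y_0\cap V$ and missing $f_0(\dotxgen)$. Define
\[
\dot U=\bigcup\bigl\{V_0\in\mathcal{B}(Y_0):\exists U_1\in\mathcal{B}(Y_1),\ f_1(\dotxgen)\in U_1,\ (f_0,f_1)^{-1}(V_0\times U_1)\cap O=\emptyset\bigr\},
\]
where $\mathcal{B}$ denotes countable $V$-bases. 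The construction gives $f_0(\dotxgen)\notin\dot U$. For $y_0'\in V\cap Y_0$, membership $y_0'\in\dot U$ is equivalent to $f_1(\dotxgen)\notin K^{y_0'}$, where $K^{y_0'}=\bigcap_{V_0\ni y_0'}\overline{f_1''(f_0^{-1}(V_0)\cap O)}\subset Y_1$; a compactness argument identifies $K^{y_0'}$ with $f_1''(\overline O\cap f_0^{-1}\{y_0'\})$. Under the failure hypothesis $K^{y_0'}$ is a compact $V$-subset of $Y_1$ with empty interior (at least after possibly shrinking $O$ to handle boundary effects), so the Cohen-generic point $f_1(\dotxgen)$ avoids this nowhere dense set.

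The main obstacles will be: in $(1)\Rightarrow(2)$, arranging the transcendence witness $y_0$ to lie inside the CRN dense $G_\delta$ domain---localization only gives a \emph{dense} set of witnesses, which need not meet a dense $G_\delta$---calling for an iterative refinement, or a direct handling of partial CRN data at non-generic witnesses; in $(2)\Rightarrow(1)$, closing the gap between the empty-interior hypothesis on $O$ and the empty interior of $K^{y_0'}$ (which is naturally computed with $\overline O$), which may require passing to a smaller subopen with better boundary behavior.
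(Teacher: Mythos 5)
Your $(2)\Rightarrow(1)$ direction is essentially the paper's own contrapositive argument (the paper passes to $O'$ with $\bar O'\subset O$ and shows the closed set $f_0''(f_1^{-1}\{y_1\}\cap\bar O')$ contains $f_0(\dotxgen)$ but no ground model point, since each $f_1''(f_0^{-1}\{y\}\cap\bar O')$ is nowhere dense and the Cohen point $f_1(\dotxgen)$ avoids it); the boundary issue you flag is resolved exactly by that shrinking. The genuine gap is in $(1)\Rightarrow(2)$: your argument hinges on producing $y=g(y_0)\in 2^\ga\cap V$ by evaluating the name for the point of $2^\ga$ at the transcendence witness $y_0$ through a continuous reading of names, and this step does not survive scrutiny. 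First, $y_0$ is just some point supplied by Definition~\ref{adefinition}; nothing makes it Cohen-generic or a member of the comeager set $Z$ on which the name is continuously read. The density of witnesses you derive is not comeagerness, and a dense set can miss a dense $G_\delta$ entirely; indeed in Example~\ref{thetaexample} the witnesses produced have a coordinate equal to a finite or cofinite set, so they live in a meager set and are certainly not generic. Second, and independently, $\ga$ is an arbitrary ordinal: $2^\ga$ is not Polish, and a $P_{Y_0}$-name for an element of $2^\ga$ admits no continuous realization on a single comeager set when $\ga$ is uncountable (each coordinate is decided on a dense open set, but the intersection of $\ga$ many such sets need not be comeager), so the function $g$ you invoke need not exist at all. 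The proposed fixes ("iterative refinement", "partial CRN data at non-generic witnesses") are not carried out, and it is precisely here that the proof is missing its main idea.

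The paper's proof shows how to avoid evaluating the name at $y_0$ altogether, and this is the point your outline lacks. By compactness of $2^\ga$ one extracts a finite family $h_z$, $O_{1z}\subset O_1$ for $z\in a$ with $2^\ga=\bigcup_{z\in a}[h_z]$ and $O_{1z}\Vdash[h_z]\subset\tau$; since $O_1$ is contained in $f_1''(f_0^{-1}\{y_0\}\cap O)$, the point $y_0$ belongs to every set $f_0''(O\cap f_1^{-1}O_{1z})$, so the finite intersection $O_0=\bigcap_{z\in a}f_0''(O\cap f_1^{-1}O_{1z})$ is a nonempty open subset of $Y_0$. Only now does one strengthen inside $O_0$ to a condition deciding $\eta$ on the finitely many coordinates $\bigcup_{z\in a}\dom(h_z)$, which selects some $z\in a$ with $O_0'\Vdash\check h_z\subset\eta$, and the condition $O\cap f_0^{-1}O_0'\cap f_1^{-1}O_{1z}$ forces $\eta\in[h_z]\subset\tau$. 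Thus the witness $y_0$ is used solely to certify nonemptiness of a finite intersection of open sets, never as a point at which a name is read, and the finitely many coordinates to be decided are chosen after the compactness step rather than by fixing a full point of $2^\ga$ in advance. Without some substitute for this maneuver, your $(1)\Rightarrow(2)$ argument does not go through.
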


\begin{proof}
To show that (1) implies (2), let $\ga$ be an ordinal, let $\eta$ be a $P_{Y_0}$-name for an element of $2^\ga$, and let $\tau$ be a $P_{Y_1}$-name for an open subset of $2^\ga$ which is forced to contain $V\cap 2^\ga$ as a subset. Let $O\subset X$ be a nonempty open set. To prove (2), I must find a strengthening $O'\subset O$ such that $O'\Vdash\eta/f_0(\dotxgen)\in\tau/f_1(\dotxgen)$.

To this end, let $y_0\in Y_0$ be a point such that the set $f_1''(f_0^{-1}\{y_0\}\cap O)$ has nonempty interior, and let $O_1\subset Y_1$ denote that interior. Use the initial assumption on $\tau$ to find, for each $z\in 2^\ga$, a condition $O_{1z}\subset O_1$ and a finite partial map $h_z\colon\ga\to 2$ such that $h_z\subset z$ and $O_{1z}\Vdash [h_z]\subset\tau$. Use a compactness argument to find a finite set $a\subset 2^\ga$ such that $2^\ga=\bigcup_{z\in a}[h_z]$. The set $O_0=\bigcap_{z\in a}f''_0(O\cap f^{-1}_1O_{1z}\subset Y_0$ is nonempty as it contains $y_0$, and it is open as the maps $f_0, f_1$ are continuous and open. Let $O'_0\subset O_0$ be a condition which decides the value $\eta(\check\gb)$ for every ordinal $\gb\in\bigcup_{z\in a}\dom(h_z)$. Since $\bigcup_{z\in a}[h_z]=0$, there must be a point $z\in a$ such that $O'_0\Vdash\check h_z\subset\eta$. The set $O'=O\cap f_0^{-1}O'_0\cap f^{-1}O_{1z}$ is nonempty and open, and it forces in $P_X$ that $\eta/f_0(\dotxgen)\in [h_z]$ and $[h_z]\subset\tau/f_1(\dotxgen)$.

 The implication (2)$\to$(1) is best proved by a contrapositive. Suppose that (1) fails, as witnessed by some open set $O\subset X$. Let $O'\subset O$ be some nonempty open set whose closure is a subset of $O$, and let $x\in O'$ be a point $P_X$-generic over the ground model. Write $y_0=f_0(x)\in Y_0$ and $y_1=f_1(x)\in Y_1$. Let $C=f_0''(f_1^{-1}\{y_1\}\cap \bar O')$. This is a closed subset of $Y_0$ coded in $V[y_1]$ which contains the point $y_0$. For the failure of (3), it is enough to show that $C$ contains no ground model point. Indeed, if $y\in Y_0$ is a point in the ground model, then $D=f_1''(f_0^{-1}\{y\}\cap \bar O')\subset Y_1$ is a closed subset of $Y_1$ coded in the ground model which has empty interior by the choice of the set $O$; in particular, $D\subset Y_1$ is nowhere dense, and since $y_1\in Y_1$ is a Cohen generic point, $y_1\notin D$ holds. Comparing the definitions of the sets $C$ and $D$, it is obvious that $y_0\notin C$ as required.
\end{proof}

\begin{example}
\label{thetaexample}
Let $b$ be a finite set and let $b=a_0\cup a_1$ be a partition into two sets, each of cardinality at least two. Let $X$ be the closed subset of $\power(\gw)^b$ consisting of those functions $x$ such that $\bigcap_{i\in a}x(i)=0$ and $\bigcup_{i\in a}x(i)=\gw$. Let $Y_0=\power(\gw)^{a_0}$ and $Y_1=\power(\gw)^{a_1}$. Let $f_0\colon X\to Y_0$ and $f_1\colon X\to Y_1$ be the projection functions. Then $f_0, f_1$ are continuous, open, and transcendental over each other.
\end{example}

\begin{proof}
The continuity and openness are left to the reader. To show that $f_1$ is transcendental over $f_0$, let $O\subset X$ be a nonempty relatively open set. Thinning the set $O$ down if necessary, one can find a natural number $k\in\gw$ and sets $c_i\subset k$ for $i\in b$ such that $\bigcup_ic_i=k$ and $\bigcap_ic_i=0$, and $O=\{x\in X\colon \forall i\in b\ x(i)\cap k=c_i\}$. Now let $y_0\in Y_0$ be any point such that $\forall i\in a_0\ y_0(i)\cap k=c_i$ and for some $i\in a_0$ $y_0(i)\subset k$, and for another $i\in a_0$ $\gw\setminus k\subset y_0(i)$. There is such a point because $|a_0|\geq 2$ holds by the assumptions. Now, it is clear that the set $f_1''(f_0^{-1}\{y_0\}\cap O)$ is exactly the open set of all points $y_1\in Y_1$ such that $\forall i\in a_1\ y_1(i)\cap k=c_i\}$.
\end{proof}

\begin{example}
\label{dexample}
Let $b$ be a finite set and let $b=a_0\cup a_1$ be a partition into nonempty sets. Let $X, Y_0, Y_1$ be the closed subsets of $\power(\gw)^b$, $\power(\gw)^{a_0}$, and $\power(\gw)^{a_1}$ consisting of tuples of pairwise disjoint subsets of $\gw$ respectively. Let $f_0\colon X\to Y_0$ and $f_1\colon X\to Y_1$ be the projection functions. Then $f_0, f_1$ are continuous, open, and mutually transcendental  functions.
\end{example}

\begin{proof}
The continuity and openness is left to the reader. For the transcendental part, I will show that $f_1$ is transcendental over $f_0$. Let $O\subset X$ be a relatively open nonempty set. Find finite sets $c_i, d_i\subset\gw$ for each $i\in b$ such that $c_i\cap d_i=0$ and the set $\{\langle z_i\colon i\in b\rangle\in \power(\gw)\colon \forall i\in b\ c_i\subset z_i$ and $d_i\cap z_i=0\}\cap X$ is a nonempty subset of $O$. Note that the sets $c_i$ for $i\in b$ must be pairwise disjoint, and we may arrange the sets $d_i$ so that if $i, j\in b$ are distinct elements then $c_i\subset d_j$. Let $y_0=\langle c_i\colon i\in a_0\rangle$ and let $O_1=\{\langle z_i\colon i\in a_0\rangle\in \power(\gw)\colon \forall i\in a_0\ c_i\subset z_i$ and $d_i\cap z_i=0\}\cap Y_1$; this is a nonempty open subset of $Y_1$. It is clear that for each point $y_1\in O_1$, $\langle y_0, y_1\rangle\in O$ holds and the proof is complete.
\end{proof}

\noindent Another class of examples of transcendental pairs of generic extensions comes from actions of Polish groups with dense diagonal orbits \cite{rosendal:amalgamation}. I am going to need a local variant of this notion which appears to be satisfied in all natural actions with dense diagonal orbits. Recall that if a group $G$ acts on a set $X$, then it also acts coordinatewise on the set $X^n$ for every natural number $n$. 

\begin{definition}
Let $G$ be a Polish group acting continuously on a Polish space $X$. The action has

\begin{enumerate}
\item \emph{dense diagonal orbits} if for every $n\in\gw$ there is a point $\vec x\in X^n$ such that $\{ g\cdot\vec x\colon g\in G\}$ is dense in $X^n$.
\item \emph{locally dense diagonal orbits} if for every open neighborhood $U\subset G$ of the unit and every nonempty open set $O\subset X$ there is a nonempty open set $O'\subset O$ such that for every $n\in\gw$ there is a point $\vec x\in X^n$ such that $\{g\cdot\vec x\colon g\in U\}$ is dense in $(O')^n$.
\end{enumerate}
\end{definition}

\begin{proposition}
\label{diagonalproposition}
Let $G$ be a Polish group acting on a Polish space $X$ with locally dense diagonal orbits. Let $Y\subset G\times X^2$ be the closed set of all triples $\langle g, x_0, x_1\rangle$ such that $g\cdot x_0=x_1$. Let $P_Y$ be its associated Cohen forcing and $\langle \dot g, \dot x_0, \dot x_1\rangle$ its names for the generic triple. $P_Y$ forces the following:

\begin{enumerate}
\item $\dot g$ is $P_G$-generic over $V$;
\item $\langle\dot x_0, \dot x_1\rangle$ is $P_{X^2}$-generic over $V$;
\item the model $V[\dot x_0, \dot x_1]$ is transcendental over $V[\dot g]$.
\end{enumerate}
\end{proposition}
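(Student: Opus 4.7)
The plan is to use the homeomorphism $\phi \colon G \times X \to Y$ defined by $\phi(g, x) = (g, x, g \cdot x)$, which identifies $P_Y$ with the Cohen forcing on $G \times X$. Since product opens form a basis for $G \times X$, this is the same as the product forcing $P_G \times P_X$, so the generic is a triple $(\dot g, \dot x_0, \dot g \cdot \dot x_0)$ with $\dot g$ and $\dot x_0$ mutually generic over $V$. Under this identification, part (1) is immediate: the projection $Y \to G$ becomes the continuous open first-coordinate projection $G \times X \to G$, and the standard factor-generic fact recalled at the start of Section~\ref{exampleIsection} yields that $\dot g$ is $P_G$-generic over $V$.

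For part (2), the projection $Y \to X^2$ becomes the action map $\alpha(g, x) = (x, g \cdot x)$; this is continuous but not in general open, so instead I verify directly that preimages under $\alpha$ of $V$-dense open subsets of $X^2$ are dense in $G \times X$. Given such a $D$ and a basic open $U \times O \subseteq G \times X$, fix $g_0 \in U$ and use continuity of multiplication and inversion to pick a neighborhood $U_0$ of the unit small enough that $g_0 U_0 \subseteq U$ and $h g_0 h^{-1} \in U$ for every $h \in U_0$. Applying the locally dense diagonal orbits assumption with $n = 2$ to $U_0$ and $O$ supplies a nonempty open $O' \subseteq O$ and a pair $\vec y$ whose $U_0$-orbit is dense in $(O')^2$; one arranges by choice of witness inside a dense orbit that $\vec y = (y_0, g_0 y_0)$. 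Density in $(O')^2$ together with density of $D$ then supplies $h \in U_0$ with $(h y_0, h g_0 y_0) \in D \cap (O')^2$, and setting $g := h g_0 h^{-1}$ and $x := h y_0$ gives $(g, x) \in U \times O$ with $\alpha(g, x) = (x, g x) \in D$.

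Part (3) is the main content, and I plan to adapt the proof of Proposition~\ref{productproposition} to the present setting. Fix a condition $W = W_G \times W_X \in P_G \times P_X$, an ordinal $\alpha$, a $P_G$-name $\dot \eta$ for an element of $2^\alpha$, and a $P_Y$-name $\dot \tau \in V[\dot x_0, \dot x_1]$ for an open subset of $2^\alpha$ with $W \Vdash V \cap 2^\alpha \subseteq \dot \tau$. In $V$, define
\[
A_0 = \{h : W_G \Vdash h \not\subseteq \dot \eta\}, \quad A_1 = \{h : \exists W'_G \leq W_G,\ \exists W'_X \leq W_X,\ (W'_G, W'_X) \Vdash [h] \subseteq \dot \tau\}.
\]
A compactness argument as in the proof of Proposition~\ref{productproposition} yields $\bigcup_{h \in A_0} [h] \subsetneq 2^\alpha$, while the covering hypothesis gives $V \cap 2^\alpha \subseteq \bigcup_{h \in A_1} [h]$; hence some $y \in V \cap 2^\alpha \setminus \bigcup_{h \in A_0} [h]$ lies in $[h]$ for some $h \in A_1 \setminus A_0$. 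Pick $W^*_G \leq W_G$ forcing $h \subseteq \dot \eta$ and $(W'_G, W'_X)$ witnessing $h \in A_1$.

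The expected main obstacle is ensuring compatibility between $W^*_G$ and $W'_G$ inside $P_G$, which in Proposition~\ref{productproposition} is automatic from the product structure of the two forcings $P_0, P_1$ on disjoint coordinates but here must be engineered from the locally dense diagonal orbits hypothesis. The crucial observation is that $\dot \tau$ depends on the generic only through $(\dot x_0, \dot g \cdot \dot x_0)$ and is therefore invariant under replacing $\dot g$ by $\dot g \cdot s$ for $s$ in the stabilizer of $\dot x_0$. Shrinking $W'_X$ using the locally dense orbits property supplies abundant stabilizer elements translating $W'_G$ to meet $W^*_G$; the resulting condition $(W^*_G \cap W'_G \cdot s, W''_X)$ for a suitable stabilizer element $s$ and a small $W''_X \leq W'_X$ is a common strengthening forcing both $h \subseteq \dot \eta$ and $[h] \subseteq \dot \tau$, and therefore forces $\dot \eta \in \dot \tau$, as required.
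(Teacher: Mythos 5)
Your part (1) is fine and matches the paper, and your general scheme for (2) (pull back $V$-dense open subsets of $X^2$ under the map $(g,x)\mapsto(x,g\cdot x)$) is the right criterion. However, the step ``one arranges by choice of witness inside a dense orbit that $\vec y=(y_0,g_0y_0)$'' is a genuine gap: the locally dense diagonal orbits hypothesis supplies \emph{some} pair with dense $U_0$-orbit in $(O')^2$ but gives no control over the relation between its coordinates, and a witness of the special form $(y_0, g_0\cdot y_0)$ need not exist at all --- for instance if $g_0$ lies in the kernel of the action (which is compatible with the hypothesis, e.g.\ for $G_0\times H$ acting through the first factor), the pair $(y_0,g_0\cdot y_0)$ sits on the diagonal of $X^2$ and no subset of its orbit is dense in $(O')^2$. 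The paper avoids any such arrangement by composition rather than conjugation: it works below a condition of the form ``$\dot g\in g_0UU^{-1}$, $\dot x_0\in O$'', takes one point whose $U$-orbit is dense in a shrunken $O'$, steers it by $h_0,h_1\in U$ into the two sides of a rectangle inside $D$, and uses the group element $g_0h_1h_0^{-1}$.

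The compatibility obstacle you flag in (3) is real, and the stabilizer argument does not resolve it; this is the main gap. Nothing in the hypothesis produces stabilizer elements of points of $X$, and even where stabilizers are large, replacing $\dot g$ by $\dot g\cdot s$ preserves the pair $(\dot x_0,\dot x_1)$ only on the set of points fixed by $s$, which is closed and in general nowhere dense; the generic $\dot x_0$ avoids it, so no open $W''_X$ legitimizes transferring ``$[h]\subseteq\tau$'' from $(W'_G,W'_X)$ to $(W^*_G\cap W'_G\cdot s, W''_X)$. The underlying problem is the order of quantifiers: once you commit to a single $h$ via the $A_0/A_1$ covering argument, its witnessing condition constrains the same $G$-coordinate that must also decide $h\subseteq\eta$, and these two open constraints on $G$ can be disjoint (in Proposition~\ref{productproposition} this cannot happen because the two names live on the two disjoint factors of a product). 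The paper's proof inverts the order: below a condition pinning $\dot g$ to $gUU^{-1}$ and $\dot x_0$ to $O$, it chooses for \emph{every} $z\in 2^\alpha$ a $P_{X^2}$-condition $O_{0z}\times O_{1z}\subseteq O\times gO$ forcing $[h_z]\subseteq\tau$ (legitimate since by (2) the pair is $P_{X^2}$-generic), extracts a finite subcover $\{[h_z]\colon z\in a\}$ of $2^\alpha$, uses locally dense diagonal orbits with $n=|a|$ to find points $x_{0z}\in O_{0z}$ and a \emph{single} $h\in UU^{-1}$ with $gh\cdot x_{0z}\in O_{1z}$ for all $z\in a$ simultaneously, and only then shrinks to a neighborhood $U'$ of $h$ so that the $P_G$-condition $gU'$ decides $\eta$ on $\bigcup_{z\in a}\dom(h_z)$; pigeonhole over the finite cover then selects $z\in a$ with $gU'\Vdash\check h_z\subseteq\eta$, and the nonempty $P_Y$-condition with coordinates in $gU'$, $O_{0z}$, $O_{1z}$ forces $\eta\in\tau$. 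You would need to restructure your argument for (3) along these lines rather than fixing $h$ first.
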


\begin{proof}
For the first item, let $p\in P_Y$ be a condition and $D\subset G$ an open dense set. I must find a stronger condition which forces $\dot g$ into $D$. There are nonempty open neighborhoods $U\subset G$ and $O\subset X$ such that $\langle g, x, g\cdot x\rangle\in p$ whenever $g\in U$ and $x\in O$. Now, just note that the set $D\cap U$ is nonempty; therefore the set of all triples $\langle g, x, g\cdot x\rangle$ where $g\in U\cap D$ and $x\in O$ is a nonempty relatively open subset of $Y$ which forces $\dot g\in D$ as desired.

For the second item, suppose first that $p\in P_Y$ is a condition and $D\subset X^2$ is an open dense set. I must find a stronger condition which forces the pair $\langle\dot x_0, \dot x_1\rangle$ into $D$. There is a point $g\in G$, an open neighborhood $U\subset G$ of the unit, and an open set $O\subset X$ such that $\langle gh, x_0, gh\cdot x_0\rangle\in p$ for all $h\in UU^{-1}$ and $x_0\in O$. Use the dense orbit assumption to thin out the set $O$ if necessary so that for every $n\in\gw$ there is a point $\vec x\in X^n$ such that $\{h\cdot\vec x\colon h\in U\}$ is dense in $O^n$. Since the set $D\subset X^2$ is open dense, there are open sets $P_0\subset O$ and $P_1\subset gO$ such that $P_0\times P_1\subset D$. By the choice of the set $O\subset X$, there must be a point $x\in P_0$ and a point $h\in UU^{-1}$ such that $hx\in g^{-1}P_1$, in other words $ghx\in P_1$. Now the relatively open set of all triples in $p$ such that their second and third coordinates belong to $P_0$ and $P_1$ respectively is nonempty, and it forces $\langle \dot x_0, \dot x_1\rangle\in D$ as required.

For the third item, suppose that $\ga$ is an ordinal, $\tau$ is a $P_{X^2}$-name for an open subset of $2^\ga$ which is forced to contain $V\cap 2^\ga$, and $\eta$ is a $P_G$-name for an element of $2^\ga$. Suppose that $p\in P_Y$ is a condition. One can find an element $h\in G$, an open neighborhood $U\subset G$ of the unit, and a nonempty open set $O\subset X$ such that  $\langle gh, x_0, gh\cdot x_0\rangle\in p$ for all $h\in UU^{-1}$ and $x_0\in O$. Use the dense orbit assumption to thin out the set $O$ if necessary so that for every $n\in\gw$ there is a point $\vec x\in X^n$ such that $\{h\cdot\vec x\colon h\in U\}$ is dense in $O^n$. 

Now, use the initial assumption on the name $\tau$ to find, for each $z\in 2^\ga$, a finite partial map $h_z\colon \ga\to 2$ and a condition $O_{0z}\times O_{1z}\subset O\times gO$ such that $h_z\subset z$ and $O_{0z}\times O_{1z}\Vdash [h_z]\subset\tau$. Use a compactness argument to find a finite set $a\subset 2^\ga$ such that $2^\ga\subset \bigcup_{z\in a}[h_z]$. Now, the dense orbit assumption provides points $x_{0z}\in O_{0z}$ and a point $h\in UU^{-1}$ such that for each $z\in a$, $h\cdot x_{0z}\in g^{-1}O_{1z}$, or in other words $gh\cdot x_{0z}\in O_{1z}$. Let $U'\subset UU^{-1}$ be an open neighborhood such that for all $k\in U'$ and all $z\in a$, $gk\cdot x_{0z}\in O_{1z}$. Shrinking $U'$ if necessary, assume that $gU'$ decides the value of $\eta\restriction\bigcup_{z\in a}\dom(h_z)$. By the choice of the set $a$, there must be a point $z\in a$ such that $gU'\Vdash \check h_z\subset \eta$. Now the relatively open set of all triples in $p$ whose coordinates belong to $gU'$, $O_{0z}$ and $O_{1z}$ respectively is nonempty, and it forces $\eta\in\tau$ as desired.
\end{proof}

\begin{example}
\label{fexample}
Let $Y$ be the closed subset of $S_\infty^4$ consisting of all quadruples $\langle g_0, g_1, g_2, g_3\rangle$ such that $g_0g_1^{-1}g_2g_3^{-1}=1$. The Cohen poset $P_Y$ adds a generic quadruple $\langle\dot g_0, \dot g_1, \dot g_2, \dot g_3\rangle$. It forces $V[\dot g_0, \dot g_2]$ and $V[\dot g_1,\dot g_3]$ to be mutually transcendental $P_{S_\infty^2}$-generic extensions of the ground model.
\end{example}

\begin{proof}
Consider the continuous action of $S_\infty^2$ on $S_\infty$ given by $(h_0, h_2)\cdot h_1=h_0h_1h_2^{-1}$. It has locally dense diagonal orbits: if $U\subset (S_\infty)^2$ is an open neighborhood of the unit and $O\subset S_\infty$ is a nonempty open set, then thinning down one may assume that there is a number $n\in\gw$ such that $U=\{\langle h_0, h_2\rangle\colon h_0\restriction n=h_1\restriction n$ is the identity$\}$ and $O=\{h_1\colon [v]\subset h_1\}$ for some permutation $v$ of $n$. Then the action of $U$ on $O$ is naturally homeomorphic to the whole action of $S_\infty^2$ on $S_\infty$. That action though has dense diagonal orbits because already the conjugation action of $S_\infty$ on $S_\infty$ has them \cite{rosendal:amalgamation}.

Now, to show for example that $P_Y$ forces $V[\dot g_1, \dot g_3]$ to be transcendental over $V[\dot x_0,\dot x_1]$, consider the self-homeomorphism of $S_\infty^4$ which takes inverses of the second and third coordinates. Note that $\langle g_0, g_1, g_2, g_3\rangle\in Y$ iff $(g_0, g_2^{-1})\cdot g_1^{-1}=g_3$ and apply Proposition~\ref{diagonalproposition}.
\end{proof}

\section{Preservation theorems}
\label{preservationsection}

As with all similar notions of perpendicularity of generic extensions, transcendence gives rise to a natural companion: a preservation property for Suslin forcings.

\begin{definition}
Let $P$ be a Suslin forcing. We say that a virtual condition $\bar p$ in $P$ is \emph{transcendentally balanced} if for every pair of mutually transcendental generic extensions $V[G_0], V[G_1]$ inside some ambient forcing extension, and for all conditions $p_0\in V[G_0]$ and $p_1\in V[G_1]$ stronger than $\bar p$, $p_0$ and $p_1$ have a common lower bound.
\end{definition}

\noindent I now state and several preservation theorems for transcendentally balanced extensions of the symmetric Solovay model.

\begin{theorem}
\label{thetatheorem}
Let $\kappa$ be an inaccessible cardinal. In cofinally transcendentally balanced forcing extensions of the symmetric Solovay model derived from $\kappa$, every nonmeager subset of $\power(\gw)$ contains a collection $d$ of cardinality four such that $\bigcap d=0$ and $\bigcup d=\gw$, both modulo finite.
\end{theorem}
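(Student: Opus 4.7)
I would argue by contradiction, using the partition $\{0,1\} \cup \{2,3\}$ of a four-element set through Example~\ref{thetaexample}. Suppose that in the transcendentally balanced generic extension $W[G]$ of the symmetric Solovay model $W$ by a Suslin poset $P$ there exists a non-meager $A \subset \power(\gw)$ containing no four-element subset $d \subset A$ with $\bigcap d$ finite and $\bigcup d$ cofinite. The standard analysis of sets in symmetric Solovay extensions presents $A$ as $\tau[G]$ for a $P$-name $\tau$ definable from a real parameter. Using cofinal transcendental balance, I strengthen the working condition to a transcendentally balanced virtual condition $\bar p$ still forcing that $\tau$ is non-meager and omits all four-element $\Theta_4$-hyperedges.

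Since $\bar p$ forces $\tau$ non-meager, there is a finite partial function $s\colon \gw \to 2$ such that $\bar p$ forces $\tau$ comeager in $[s]$. Choose finite partial extensions $t_0, t_1, t_2, t_3 \supseteq s$ of common domain $n \geq 4$ such that the sets $u_i = t_i^{-1}(1) \cap n$ form an exact partition of $n$ into four nonempty blocks; in particular the opens $[t_i]$ are pairwise disjoint. In an auxiliary collapse extension $W[K']$, let $\langle x_0, x_1, x_2, x_3\rangle$ be a $P_X$-generic quadruple over $W$ lying in $X \cap \prod_i [t_i]$, where $X \subset \power(\gw)^4$ is the closed set of tuples with empty intersection and union equal to $\gw$. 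By Example~\ref{thetaexample} applied to the partition $a_0 = \{0,1\}$, $a_1 = \{2,3\}$, the extensions $W[x_0, x_1]$ and $W[x_2, x_3]$ are mutually transcendental over $W$; and by openness of the projections $X \to \power(\gw)^{a_i}$, each of the pairs $\langle x_0, x_1\rangle$ and $\langle x_2, x_3\rangle$ is Cohen-generic over $W$ for the respective open box.

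A standard Baire-category argument in $W$ shows that the set $B = \{\langle y_0, y_1\rangle \in [t_0] \times [t_1] : \exists q \leq \bar p,\ q \Vdash \{\check y_0, \check y_1\} \subset \tau\}$ is comeager in $[t_0] \times [t_1]$: otherwise a nonmeager complement together with a $W$-generic $G \ni \bar p$ would force $\tau[G] \times \tau[G]$ to avoid $B$ on some subbox while simultaneously being comeager there, a contradiction. Since $\langle x_0, x_1\rangle$ is Cohen-generic over $W$ in $[t_0] \times [t_1]$, one selects $p_{01} \leq \bar p$ in $W[x_0, x_1]$ with $p_{01} \Vdash \{\check x_0, \check x_1\} \subset \tau$. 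A symmetric argument produces $p_{23} \leq \bar p$ in $W[x_2, x_3]$ with $p_{23} \Vdash \{\check x_2, \check x_3\} \subset \tau$. Transcendental balance of $\bar p$ applied to the mutually transcendental pair yields a common lower bound $p^* \leq p_{01}, p_{23}$. A generic $G^* \ni p^*$ then satisfies $\{x_0, x_1, x_2, x_3\} \subset \tau[G^*]$, and by construction this set is a four-element $\Theta_4$-hyperedge, contradicting the defining property of $\bar p$.

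The main obstacle will be the Baire-category step showing that $B$ is comeager in $W$: this hinges on the absoluteness of meagerness between $W$ and its $P$-generic extensions for the sets defined via the Suslin-forcing relation, a point which will have to be checked using the definability features of $P$. The initial reduction of $A$ to a name $\tau$ witnessable by a transcendentally balanced $\bar p$ is the standard Solovay-model machinery of \cite{z:geometric}. The heart of the argument is simply the direct appeal to transcendental balance on the two mutually transcendental Cohen extensions supplied by Example~\ref{thetaexample}.
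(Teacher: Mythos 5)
Your overall strategy---obtaining a generic quadruple via Example~\ref{thetaexample}, splitting it into two mutually transcendental pair-extensions, and applying transcendental balance to amalgamate---is indeed the skeleton of the paper's proof, but the execution has genuine gaps. First, the step ``there is $s$ such that $\bar p$ forces $\tau$ comeager in $[s]$'' is unjustified: nonmeagerness of $\tau$ yields comeagerness on a basic open set only if $\tau$ has the Baire property in the $P$-extension, and that is precisely the kind of regularity the forcing may destroy; it is neither assumed nor available. The paper uses nonmeagerness only against the comeager set of points Cohen-generic over a small intermediate model $V[K]$ (of size $<\kappa$, containing the defining parameter of $\tau$, and such that $P$ is transcendentally balanced there): if no condition below $\bar p$ forced some such Cohen-generic point into $\tau$, then $\bar p$ would force $\tau$ to avoid a comeager set. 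This produces a Cohen condition $q$, an auxiliary poset $R$ of size $<\kappa$, and a $Q\times R$-name $\gs$ for a condition below $\bar p$ forcing the single generic point into $\tau$. Your proposal has no analogue of $R$ or $\gs$, and your Baire-category claim that comeager many \emph{pairs} admit a single condition forcing both points into $\tau$ cannot be pushed through without it; in the paper even the pair stage is an amalgamation, namely of the two conditions $\gs/z_0,H_0$ and $\gs/z_1,H_1$ obtained from mutually generic data, using balance together with the fact that mutually generic extensions are mutually transcendental (Proposition~\ref{productproposition}).

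Second, you take your generics over $W$ inside an auxiliary collapse extension $W[K']$. This is the wrong model: the hyperedge $d$ and the amalgamated condition must lie in $W$ itself for the forcing theorem over $W$ to give the conclusion, and transcendental balance of the virtual condition $\bar p$ is a statement about pairs of extensions of $V[K]$, not of $W$. The correct move is that generic filters over $V[K]$ for posets of size $<\kappa$ already exist inside $W$, so the entire amalgamation diagram is built within $W$. Finally, a smaller but concrete problem: you cannot choose four finite conditions $t_i\supseteq s$ whose $1$-blocks exactly partition $n$ unless $s$ is trivial, since each block would have to contain $s^{-1}(1)$ and avoid $s^{-1}(0)$; the paper instead takes the $P_X$-generic quadruple and then finitely modifies its coordinates to meet the single Cohen condition $q$ (finite modifications preserve Cohen-genericity), which is exactly why the conclusion is stated modulo finite.
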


\begin{proof}
Let $P$ be a Suslin forcing which is cofinally transcendentally balanced below $\kappa$. Let $W$ be the symmetric Solovay model derived from $\kappa$ and work in the model $W$. Suppose that $p\in P$ is a condition, $\tau$ is a $P$-name, and $p\Vdash\tau\subset \power(\gw)$ is a nonmeager set. I must find a set $d\subset \power(\gw)$ of size four such that $\bigcap d=0$ and $\bigcup d=\gw$, both modulo finite, and a strengthening of the condition $p$ which forces $\check d\subset\tau$.

To this end, let $z\in\cantor$ be a point such that $p, \tau$ are both definable from the parameter $z$ and some parameters in the ground model. Let $V[K]$ be an intermediate forcing extension obtained by a poset of cardinality less than $\kappa$ such that $z\in V[K]$ and $V[K]\models P$ is transcendentally balanced. Work in $V[K]$. Let $\bar p\leq p$ be a transcendentally balanced virtual condition. Let $Q$ be the Cohen poset of nonempty open subsets of $\power(\gw)$, adding a single generic point $\dot z$. There must be a condition $q\in Q$ and a poset $R$ of cardinality smaller than $\kappa$ and an $Q\times R$-name $\gs$ for a condition in $P$ stronger than $\bar p$ such that $q\Vdash_Q R\Vdash\coll(\gw, <\kappa)\Vdash\gs\Vdash_P\dot z\in\tau$. Otherwise, in the model $W$ the condition $\bar p$ would force $\tau$ to be disjoint from the co-meager set of elements of $\power(\gw)$ which are Cohen-generic over $V[K]$, contradicting the initial assumption on $\tau$.

Now, let $X=\{\langle x\rangle\in \power(\gw)^4\colon\bigcup_{i\in 4}x(i)=\gw$ and $\bigcap_{i\in 4}x(i)=0\}$ with the topology inherited from $\power(\gw)^4$. Let $x\in X$ be a point generic over $V[K]$ for the Cohen poset with $X$. By Example~\ref{thetaexample}, $x(0), x(1)$ are mutually Cohen-generic elements of $\power(\gw)$, so are $x(2), x(3)$, and the models $V[K][x(0), x(1)]$ and $V[K][x(2), x(3)]$ are mutually transcendental. Choose finite modifications $z_i$ of $x_i$ such that $z_i\in q$ holds for all $i\in 4$; each of these points is still $Q$-generic over $V[K]$ and meets the condition $q$. Let $H_i\colon i\in 4$ be filters on $R$ mutually generic over the model $V[K][x]$ and let $p_i=\gs/z_i, H_i$. By Proposition~\ref{productproposition}, conclude that the models $V[K][z_0][H_0]$ and $V[K][z_1][H_1]$ are mutually generic extensions of $V[K]$, so are $V[K][z_2][H_2]$ and $V[K][z_3][H_3]$, and the models $V[K][z_0, z_1][H_0, H_1]$ and $V[K][z_2, z_3][H_2, H_3]$ are mutually transcendental extensions of $V[K]$. Now, the balance assumption on the virtual condition $\bar p$, we see that the conditions $p_0, p_1$ have a common lower bound $p_{01}$ in the model $V[K][z_0, z_1][H_0, H_1]$, the conditions $p_2$ and $p_3$ have a common lower bound $p_{23}$ in the model $V[K][z_2, z_3][H_2, H_3]$, and finally the conditions $p_{01}$ and $p_{23}$ have a common lower bound as well. The forcing theorem then shows that such a lower bound then forces in the model $W$ that $\check z_i\in\tau$ holds for all $i\in 4$. The proof is complete.
\end{proof}

\noindent For every number $n\geq 2$ let $\Theta_n$ be the hypergraph of arity $n$ on $\power(\gw)$ consisting of sets $d$ of size $n$ such that $\bigcap d=0$ and $\bigcup d=\gw$, both modulo finite.

\begin{corollary}
\label{thetacorollary}
Let $\kappa$ be an inaccessible cardinal. In cofinally transcendentally balanced forcing extensions of the symmetric Solovay model derived from $\kappa$, the chromatic number of $\Theta_4$ is uncountable.
\end{corollary}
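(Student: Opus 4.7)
The plan is to deduce the corollary as an immediate consequence of Theorem~\ref{thetatheorem} via a Baire-category pigeonhole argument. Work inside a cofinally transcendentally balanced forcing extension $W'$ of the symmetric Solovay model $W$; by hypothesis $W'\models\mathsf{ZF}+\mathsf{DC}$ and Theorem~\ref{thetatheorem} applies there.

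Suppose for contradiction that $c\colon\power(\gw)\to\gw$ is a total $\Theta_4$-coloring in $W'$. The fibers $c^{-1}\{n\}$ for $n\in\gw$ partition $\power(\gw)$ into countably many pieces. Since $\power(\gw)$ is a Polish space and $\mathsf{DC}$ is available in $W'$, the Baire category theorem guarantees that $\power(\gw)$ is not meager in itself, so at least one fiber $A=c^{-1}\{n_0\}$ must be nonmeager.

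Now apply Theorem~\ref{thetatheorem} directly to the nonmeager set $A$: this produces a collection $d\subset A$ of cardinality four with $\bigcap d=0$ and $\bigcup d=\gw$ both modulo finite, i.e.\ a $\Theta_4$-hyperedge inside $A$. But then $c$ is constantly equal to $n_0$ on $d$, contradicting the assumption that $c$ is a $\Theta_4$-coloring. Hence no countable-valued total $\Theta_4$-coloring exists in $W'$, so the chromatic number of $\Theta_4$ is uncountable there.

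I do not foresee any real obstacle: the only ingredients are Theorem~\ref{thetatheorem}, which is already at our disposal, and the Baire category theorem for Polish spaces, which holds in $\mathsf{ZF}+\mathsf{DC}$. As a minor remark, one could alternatively invoke the fact that in the Solovay-type extension every subset of a Polish space has the Baire property (and this is preserved under the $\gs$-closed Suslin forcings considered here), but the straightforward partition-pigeonhole argument already suffices.
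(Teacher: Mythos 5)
Your proposal is correct and is exactly the intended argument: the paper states Corollary~\ref{thetacorollary} as an immediate consequence of Theorem~\ref{thetatheorem}, via the same pigeonhole observation that some fiber of a countable-valued coloring must be nonmeager (using DC, available in these extensions, for the Baire category argument) and then contains a monochromatic $\Theta_4$-hyperedge.
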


\begin{theorem}
Let $\kappa$ be an inaccessible cardinal. In cofinally transcendentally balanced forcing extensions of the symmetric Solovay model derived from $\kappa$, every nonmeager subset of $S_\infty$ contains a quadruple of distinct points solving the equation $g_0g_1^{-1}g_2g_3^{-1}=1$.
\end{theorem}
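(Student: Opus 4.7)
\emph{Plan.} My plan is to follow the template of Theorem~\ref{thetatheorem}, replacing the role of Example~\ref{thetaexample} by Example~\ref{fexample}. Let $P$ be a Suslin forcing cofinally transcendentally balanced below $\kappa$, and work in the Solovay model $W$ derived from $\kappa$. Given $p\in P$ and a $P$-name $\tau$ with $p\Vdash\tau\subset S_\infty$ nonmeager, I would repeat the standard opening: pass to an intermediate extension $V[K]$ by a poset of cardinality less than $\kappa$ containing all relevant parameters and in which $P$ is transcendentally balanced, pick a transcendentally balanced virtual condition $\bar p\leq p$ there, and use the nonmeagerness of $\tau$ to produce a basic open Cohen condition $q=[v]\subset S_\infty$ (for some finite partial injection $v$), a poset $R\in V[K]$ of size less than $\kappa$, and a $(P_{S_\infty}\times R)$-name $\gs$ for a condition below $\bar p$ such that $q\Vdash R\Vdash\coll(\gw,<\kappa)\Vdash\gs\Vdash_P\dot z\in\tau$.

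Next I would bring in the transcendental quadruple from Example~\ref{fexample}. Let $Y\subset S_\infty^4$ be the closed set of quadruples satisfying $g_0g_1^{-1}g_2g_3^{-1}=1$, and let $\vec g=\langle g_0,g_1,g_2,g_3\rangle$ be $P_Y$-generic over $V[K]$. By Example~\ref{fexample}, the pairs $(g_0,g_2)$ and $(g_1,g_3)$ are each $P_{S_\infty^2}$-generic (so $g_0,g_2$ are mutually Cohen-generic, and so are $g_1,g_3$), while the extensions $V[K][g_0,g_2]$ and $V[K][g_1,g_3]$ are mutually transcendental. The main new wrinkle, and the place where I expect the actual work to lie, is that arbitrary finite modifications of the $g_i$ will destroy the strict identity $g_0g_1^{-1}g_2g_3^{-1}=1$, since unlike the ``modulo finite'' identities controlling $\Gamma_n$ and $\Theta_n$, this one is not finite-change-invariant. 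To preserve the equation while also forcing each coordinate into $q$, I would pick $n$ so large that $\dom(v)\cup\rng(v)\subset n$, extend $v$ to a permutation $\pi$ of $n$, and finitely modify each of $g_0,g_1,g_2$ to a permutation $z_i\in S_\infty$ agreeing with $\pi$ on $n$; I then define $z_3:=z_0z_1^{-1}z_2$. A direct computation using $\pi^{-1}(\pi(k))=k$ for $k<n$ shows $z_3\restriction n=\pi$, so automatically $z_3\in[v]=q$. Moreover $z_3$ differs from $g_3=g_0g_1^{-1}g_2$ at only finitely many places, so $V[K][z_3]=V[K][g_3]$ and $z_3$ is Cohen-generic over $V[K]$. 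Distinctness of the four $z_i$ then follows from Proposition~\ref{iproposition}: any coincidence $z_i=z_j$ with $i,j$ in different transcendental pairs would force this common element into $V[K][z_0,z_2]\cap V[K][z_1,z_3]=V[K]$, contradicting the Cohen-genericity of $z_i$, while coincidences within a pair are excluded by mutual genericity.

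The final step would be a verbatim adaptation of the amalgamation argument in Theorem~\ref{thetatheorem}, but with the pairing $\{0,2\},\{1,3\}$ in place of $\{0,1\},\{2,3\}$. I would pick filters $H_0,H_1,H_2,H_3\subset R$ mutually generic over $V[K][\vec g]$, set $p_i:=\gs/z_i,H_i$, and invoke Proposition~\ref{productproposition} to see that $V[K][z_0,H_0]$ and $V[K][z_2,H_2]$ are mutually transcendental (and similarly for the other pair). Two applications of transcendental balance of $\bar p$ yield common lower bounds $p_{02}\leq p_0,p_2$ and $p_{13}\leq p_1,p_3$, and a third application, using the mutual transcendence of $V[K][z_0,z_2,H_0,H_2]$ and $V[K][z_1,z_3,H_1,H_3]$ (again by Proposition~\ref{productproposition}), produces a common lower bound of $p_{02}$ and $p_{13}$. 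The forcing theorem then guarantees that this final condition forces $\check z_i\in\tau$ for each $i<4$ in $W$, which is exactly what is required. Everything outside the equation-preserving modification step of the previous paragraph is a direct transcription of the $\Theta_4$ argument, so the single place where real work is needed is the permutation-swap construction of $\vec z$.
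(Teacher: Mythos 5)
Your proposal is correct and, apart from one step, is a transcription of the paper's argument: same passage to $V[K]$, same extraction of $q$, $R$, $\gs$, same use of Example~\ref{fexample} to get the pairing $\{0,2\},\{1,3\}$, same three applications of transcendental balance via Proposition~\ref{productproposition}, same appeal to the forcing theorem. The one place you diverge is exactly the step you flagged as the real work: how to get all four coordinates of the solution quadruple into the condition $q$. The paper sidesteps any modification by forcing with the Cohen poset of the solution variety $X=\{x\in S_\infty^4\colon x(0)x(1)^{-1}x(2)x(3)^{-1}=1\}$ below the relatively open set $O=q^4\cap X$, which is nonempty simply because any constant quadruple $\langle g,g,g,g\rangle$ with $g\in q$ solves the equation; the generic quadruple then lands in $q^4$ automatically and no finite adjustments (and hence no re-verification of genericity or of the identity of the models) are needed. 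Your route instead forces with the full variety and then performs the equation-preserving finite modification: fix $\pi\supset v$ a permutation of $n$, change $g_0,g_1,g_2$ finitely so that each restricts to $\pi$ on $n$, and set $z_3=z_0z_1^{-1}z_2$; your checks that $z_3\restriction n=\pi$, that $z_3$ differs from $g_3$ only finitely (so $V[K][z_1,z_3]=V[K][g_1,g_3]$ and genericity survives, the modification data being finite and hence in $V$), and that distinctness follows from mutual genericity within pairs plus Proposition~\ref{iproposition} across pairs, are all valid, and the last point is in fact an explicit verification of the distinctness claim that the paper leaves implicit. So both arguments work; the paper's choice of $O=q^4\cap X$ buys brevity, while your modification argument is in the spirit of the finite adjustments the paper itself makes in the $\Theta_4$ theorem and makes the preservation of the group equation, genericity, and distinctness fully explicit.
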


\begin{proof}
Let $P$ be a Suslin forcing which is cofinally transcendentally balanced below $\kappa$. Let $W$ be the symmetric Solovay model derived from $\kappa$ and work in the model $W$. Suppose that $p\in P$ is a condition, $\tau$ is a $P$-name, and $p\Vdash\tau\subset S_\infty$ is a nonmeager set. I must find distinct points $z_0, z_1, z_2, z_3\in S_\infty$ such that $z_0z_1^{-1}z_2z_3^{-1}=1$ and a strengthening of the condition $p$ which forces all four of these points into $\tau$.

To this end, let $z\in\cantor$ be a point such that $p, \tau$ are both definable from the parameter $z$ and some parameters in the ground model. Let $V[K]$ be an intermediate forcing extension obtained by a poset of cardinality less than $\kappa$ such that $z\in V[K]$ and $V[K]\models P$ is transcendentally balanced. Work in $V[K]$. Let $\bar p\leq p$ be a transcendentally balanced virtual condition. Let $Q$ be the Cohen poset of nonempty open subsets of $S_\infty$, adding a single generic point $\dot g$. There must be a condition $q\in Q$ and a poset $R$ of cardinality smaller than $\kappa$ and an $Q\times R$-name $\gs$ for a condition in $P$ stronger than $\bar p$ such that $q\Vdash_Q R\Vdash\coll(\gw, <\kappa)\Vdash\gs\Vdash_P\dot g\in\tau$. Otherwise, in the model $W$ the condition $\bar p$ would force $\tau$ to be disjoint from the co-meager set of elements of $S_\infty$ which are Cohen-generic over $V[K]$, contradicting the initial assumption on $\tau$.

Now, let $X=\{x\in S_\infty^4\colon x(0)x(1)^{-1}x(2)x(3)^{-1}=1\}$ with the topology inherited from $S_\infty^4$. Consider the nonempty relatively open set $O\subset X$ given by $O=q^4\cap X$. Note that the set $O$ is indeed nonempty because any constant quadruple in $S_\infty^4$ belongs to $X$. Let $\langle z_i\colon i\in 4\rangle\in O$ be a tuple generic over $V[K]$ for the Cohen poset with $X$. By Example~\ref{fexample}, $z_0, z_2$ are mutually Cohen-generic elements of $S_\infty$ below the condition $q$, so are $z_1, z_3$, and the models $V[K][z_0, z_2]$ and $V[K][z_1, z_3]$ are mutually transcendental. Let $H_i\colon i\in 4$ be filters on $R$ mutually generic over the model $V[K][z_0, z_1, z_2, z_3]$ and let $p_i=\gs/g_i, H_i$. By Proposition~\ref{productproposition}, conclude that the models $V[K][z_0][H_0]$ and $V[K][z_2][H_2]$ are mutually generic extensions of $V[K]$, so are $V[K][z_1][H_1]$ and $V[K][z_3][H_3]$, and the models $V[K][z_0, z_2][H_0, H_2]$ and $V[K][z_1, z_3][H_1, H_3]$ are mutually transcendental extensions of $V[K]$. Now, the balance assumption on the virtual condition $\bar p$, we see that the conditions $p_0, p_2$ have a common lower bound $p_{02}$ in the model $V[K][z_0, z_2][H_0, H_2]$, the conditions $p_1$ and $p_3$ have a common lower bound $p_{13}$ in the model $V[K][z_1, z_3][H_1, H_3]$, and finally the conditions $p_{02}$ and $p_{13}$ have a common lower bound as well. The forcing theorem then shows that such a lower bound then forces in the model $W$ that $\check z_i\in\tau$ holds for all $i\in 4$. The proof is complete.
\end{proof}

\begin{corollary}
\label{deltacorollary}
Let $\kappa$ be an inaccessible cardinal. In cofinally transcendentally balanced forcing extensions of the symmetric Solovay model derived from $\kappa$, the chromatic number of the hypergraph on $S_\infty$ consisting of solutions to the equation $g_0g_1^{-1}g_2g_3^{-1}=1$ is uncountable.
\end{corollary}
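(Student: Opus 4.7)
The plan is a short argument by contradiction, entirely parallel to the passage from Theorem~\ref{thetatheorem} to Corollary~\ref{thetacorollary}. Suppose toward contradiction that in the extension there is a total $\Delta(S_\infty)$-coloring $c\colon S_\infty\to\gw$, where $\Delta(S_\infty)$ denotes the arity-four hypergraph whose hyperedges are the four-element subsets of $S_\infty$ admitting an enumeration $g_0,g_1,g_2,g_3$ that solves $g_0g_1^{-1}g_2g_3^{-1}=1$.

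The first step is to extract a nonmeager color class. Since $S_\infty$ is a Polish space, the Baire category theorem is available in ZF+DC and prevents writing it as a countable union of meager subsets. Consequently at least one preimage $A=c^{-1}\{n\}$ is nonmeager in $S_\infty$.

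The preceding theorem now applies directly to $A$: it produces distinct points $z_0,z_1,z_2,z_3\in A$ satisfying $z_0z_1^{-1}z_2z_3^{-1}=1$. These four points form a hyperedge of $\Delta(S_\infty)$ on which $c$ is constant with value $n$, contradicting the defining property of a coloring. Hence no countable total $\Delta(S_\infty)$-coloring exists, and the chromatic number of $\Delta(S_\infty)$ is uncountable.

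I do not anticipate any real obstacle; the work is concentrated entirely in the preceding theorem. The single detail worth pointing out is that the theorem delivers four \emph{distinct} solutions: otherwise a constant quadruple would trivially satisfy the equation and the color class would not be forced to contain an actual hyperedge. This distinctness is explicitly built into the theorem statement, so the reduction goes through without any further work.
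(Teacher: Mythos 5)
Your argument is correct and is exactly the (implicit) argument the paper intends: the corollary follows from the preceding theorem by the Baire category observation that some color class of a countable coloring of $S_\infty$ must be nonmeager, and that class then contains a monochromatic hyperedge of distinct points. Your remark about distinctness is also the right point to flag, and it is indeed covered by the theorem's statement.
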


\noindent Certain consistency results require amalgamation diagrams with multiple forcing extensions. The following definitions and a theorem show one such possibility.

\begin{definition}
Let $m>n$ be natural numbers. A poset $P$ is $m, n$-\emph{centered} if for every set $a\subset P$ of cardinality $m$, if every subset of $a$ of cardinality $\leq n$ has a common lower bound, then $a$ has a common lower bound.
\end{definition}

\noindent Note that in the case of a Suslin poset $P$, the $m, n$-centeredness of $P$ is a $\mathbf{\Pi}^1_2$-statement and therefore absolute among all forcing extensions by a Shoenfield absoluteness argument.

\begin{theorem}
\label{gammatheorem}
Let $\kappa$ be an inaccessible cardinal and $n\geq 2$ be a number. In $n+1, n$-centered, cofinally transcendentally balanced forcing extensions of the symmetric Solovay model derived from $\kappa$, every nonmeager subset of $\power(\gw)$ contains $n+1$ many sets which modulo finite form a partition of $\gw$.
\end{theorem}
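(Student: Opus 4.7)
The plan is to follow closely the template of Theorem~\ref{thetatheorem}: replace the $4$-element space of tuples with $\bigcap=0$ and $\bigcup=\gw$ by the compact Polish space $X\cong(n+1)^\gw$ of labeled partitions of $\gw$ into $n+1$ pieces, and replace the single binary pair-amalgamation by an induction on subset size culminating in an appeal to $(n+1,n)$-centeredness. The preparatory moves go through verbatim. Let $W$ be the Solovay model derived from $\kappa$. Given $p\in P$ and a $P$-name $\tau$ forced to be a nonmeager subset of $\power(\gw)$, pass to an intermediate extension $V[K]$ of size $<\kappa$ over which $p,\tau$ are definable and $V[K]\models P$ is transcendentally balanced (the $(n+1,n)$-centeredness being $\mathbf{\Pi}^1_2$ and hence automatic). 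Fix a transcendentally balanced virtual condition $\bar p\leq p$; using the nonmeagerness of $\tau$, find a Cohen condition $q$ on $\power(\gw)$, a poset $R\in V[K]$ of cardinality $<\kappa$, and a $P_{\power(\gw)}\times R$-name $\gs$ for a strengthening of $\bar p$ forcing a Cohen-generic subset of $\gw$ meeting $q$ into $\tau$.

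The key combinatorial input is a transcendence lemma generalizing Examples~\ref{thetaexample} and \ref{dexample}: for any two disjoint $S_0,S_1\subseteq\{0,\dots,n\}$ with $S_0\cup S_1\subsetneq\{0,\dots,n\}$, the two projections of $X$ to the image spaces of pairwise-disjoint tuples indexed by $S_0$ and by $S_1$ respectively are mutually transcendental in the sense of Definition~\ref{adefinition}. To see the transcendence of the $S_1$-projection over the $S_0$-projection, given a basic open $O=\{f\in(n+1)^\gw:f\restriction k=s\}$ with $c_i=s^{-1}\{i\}$, take $y_0=(c_i)_{i\in S_0}$: then $f_1''(f_0^{-1}\{y_0\}\cap O)$ is exactly the relatively open set of pairwise disjoint $S_1$-tuples $(z_i)_{i\in S_1}$ with $z_i\cap k=c_i$, the unfixed coordinates indexed by $\{0,\dots,n\}\setminus(S_0\cup S_1)$ freely absorbing all remaining integers into a legitimate partition.

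With the lemma in hand, take a Cohen-generic $(x_0,\dots,x_n)\in X$ over $V[K]$, finitely modify each $x_i$ on $k$ so that $z_i\in q$ (preserving both $V[K][z_i]=V[K][x_i]$ and the mod-finite partition property), and pick mutually $R$-generic filters $H_0,\dots,H_n$ over $V[K][\vec x]$. Setting $p_i=\gs/z_i,H_i\leq\bar p$, each $p_i$ forces $\check z_i\in\tau$, and by Proposition~\ref{productproposition} combined with the lemma, $V[K][(z_i,H_i)_{i\in S_0}]$ and $V[K][(z_i,H_i)_{i\in S_1}]$ are mutually transcendental whenever $S_0\cup S_1\subsetneq\{0,\dots,n\}$. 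An induction on $|S|\leq n$ shows that every subfamily $\{p_i:i\in S\}$ has a common lower bound in $V[K][(z_i,H_i)_{i\in S}]$: the base case $|S|=1$ is trivial, and in the inductive step, split $S=S_0\sqcup S_1$ nontrivially (so $|S_0\cup S_1|=|S|\leq n$ leaves a leftover coordinate), apply the induction hypothesis to obtain common lower bounds $q_0,q_1$ for each half, and invoke the transcendental balance of $\bar p$ to amalgamate them. Once every $n$-subset of $\{p_0,\dots,p_n\}$ has been shown to have a common lower bound, the $(n+1,n)$-centeredness of $P$ delivers a single common lower bound for all $n+1$ conditions, which by the forcing theorem forces $\check z_0,\dots,\check z_n\in\tau$; the sets $z_0,\dots,z_n$ form the required mod-finite partition of $\gw$ inside $\tau$.

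The principal obstacle is the sharp hypothesis $S_0\cup S_1\subsetneq\{0,\dots,n\}$ in the transcendence lemma: if $S_0\cup S_1$ exhausted all $n+1$ indices, the $S_1$-tuple would be forced by the $S_0$-tuple via $\bigcup_iz_i=\gw$, the projection fibre $f_0^{-1}\{y_0\}$ would collapse to a single point, and the lemma would fail. This caps the iterative pair-amalgamation at subsets of size $\leq n$, which is precisely the threshold at which the $(n+1,n)$-centeredness hypothesis becomes essential.
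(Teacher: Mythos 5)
Your proposal is correct and follows the same overall architecture as the paper's proof (Solovay-model template, Cohen-generic labeled partition, finite modifications into the condition $q$, mutually generic $R$-filters, pairwise amalgamation capped at $n$ indices, then the $n+1,n$-centeredness to finish), but the amalgamation engine is set up differently. The paper first uses centeredness to reduce to a fixed $b\subset n+1$ of size $n$, observes that the projection of the partition space $X$ to the space $Y_b$ of pairwise disjoint $b$-tuples is open (so $x\restriction b$ is $P_{Y_b}$-generic), and then runs a linear recursion adding one condition at a time, quoting Example~\ref{dexample} for the needed mutual transcendence and an explicit Mostowski absoluteness argument to keep the accumulated lower bound inside the small model. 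You instead prove a direct transcendence lemma on $X$ itself -- the projections to disjoint index sets $S_0,S_1$ are mutually transcendental in the sense of Definition~\ref{adefinition} provided $S_0\cup S_1\subsetneq\{0,\dots,n\}$ -- and run a binary-split induction over all subfamilies of size at most $n$; your lemma and its proof are correct (the leftover coordinate absorbs $\gw\setminus(k\cup\bigcup_{i\in S_1}z_i)$ exactly as you say), and it is essentially the content the paper extracts from Example~\ref{dexample} by dropping the covering constraint, so the two routes buy the same thing; yours is marginally more self-contained, the paper's reuses an example already proved. One small point you leave implicit but which your induction hypothesis actually requires: transcendental balance only yields that $q_0$ and $q_1$ are compatible in the ambient model, and to place a common lower bound inside $V[K][(z_i,H_i)_{i\in S}]$ you need an absoluteness argument (compatibility in a Suslin forcing is analytic in the two conditions), which is the step the paper spells out; your diagnosis of why the argument caps at size $n$ and why $n+1,n$-centeredness is then indispensable matches the paper exactly.
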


\begin{proof}
Let $P$ be a Suslin forcing which is $n+1, n$-centered and cofinally transcendentally balanced below $\kappa$. Let $W$ be the symmetric Solovay model derived from $\kappa$ and work in the model $W$. Suppose that $p\in P$ is a condition, $\tau$ is a $P$-name, and $p\Vdash\tau\subset\power(\gw)$ is a nonmeager set. I must find a collection $\{a_i\colon i\in n+1\}$ which is modulo finite a partition of $\gw$ and a condition stronger than $p$ which forces every element of this collection into $\tau$.

To this end, let $z\in\cantor$ be a point such that $p, \tau$ are both definable from the parameter $z$ and some parameters in the ground model. Let $V[K]$ be an intermediate forcing extension obtained by a poset of cardinality less than $\kappa$ such that $z\in V[K]$ and $V[K]\models P$ is transcendentally balanced. Work in $V[K]$. Let $\bar p\leq p$ be a transcendentally balanced virtual condition. Let $Q$ be the Cohen poset of nonempty open subsets of $\power(\gw)$, adding a single generic point $\dot a$. There must be a condition $q\in Q$ and a poset $R$ of cardinality smaller than $\kappa$ and an $Q\times R$-name $\gs$ for a condition in $P$ stronger than $\bar p$ such that $q\Vdash_Q R\Vdash\coll(\gw, <\kappa)\Vdash\gs\Vdash_P\dot a\in\tau$. Otherwise, in the model $W$ the condition $\bar p$ would force $\tau$ to be disjoint from the co-meager set of elements of $\power(\gw)$ which are Cohen-generic over $V[K]$, contradicting the initial assumption on $\tau$.

Let $X$ be the closed subset of $\power(\gw)^{n+1}$ consisting of tuples of sets which form a partition of $\gw$ and consider the poset $P_X$ of relatively open subsets of $X$, adding a generic partition $\dot x$ of $\gw$ into $n+1$ many sets. Move to the model $W$ and find an $n+1$-tuple $x$ which is generic over the model $V[K]$ for the poset $P_X$. Let $H_i\subset R$ for $i\in n+1$ be a collection of filters mutually generic over the model $V[K][x]$. For each $i\in n+1$, make a finite adjustment to $x(i)$ so that the resulting set $a_i\subset\gw$ meets the condition $q$; note that $a_i$ is $Q$-generic over $V[K]$. Let $p_i=\gs/a_i, H_i$. Since $p_i\Vdash a_i\in\tau$, it will be enough to show that the conditions $p_i$ for $i\in n+1$ have a common lower bound in the poset $P$.

By the centeredness assumption on the poset $P$, it is enough to show that for every set $b\subset n+1$ of cardinality $n$, the conditions $p_i$ for $i\in b$ have a common lower bound in $P$.
To do this, return to $V[K]$ and consider the closed subset $Y_b$ of $\power(\gw)^b$ consisting of pairwise disjoint sets. It is easy to check that the projection map from $X$ to $Y_b$ is open.
Thus, the restriction $x\restriction b$ is generic over the model $V[K]$ for the poset $P_{Y_b}$ of nonempty relatively open subsets of $Y$ by \cite[Proposition 3.1.1]{z:geometric}. 
Now, let $\langle i_j\colon j\in n\rangle$ be an enumeration of the set $b$. By recursion on $j\in n$ build conditions $q_j\in V[K][x(i_k), H_{i_k}\colon k\leq j]$ which are a common lower bound of all $p_{i_k}$ for $k\leq j$ respectively. To start, let $q_0=p_{i_0}$. For the recursion step, suppose that the condition $q_j$ has been found. Note that the models $V[K][x(i_k)\colon k\in j]$ and $V[K][x(i_j)]$ are mutually transcendental by Example~\ref{dexample}. It follows that also models $V[K][x(i_k), H_{i_k}\colon k\in j]$ and $V[K][x(i_j)][H_{i_j}]$ are mutually transcendental over $V[K]$ by Proposition~\ref{productproposition}. By the balance assumption on $\bar p$, it must be the case that the conditions $q_j$ and $p_{i_{j+1}}$ have a common lower bound, and one such common lower bound $q_{j+1}$ must be in the model $V[K][x(i_k), H_{i_k}\colon k\in j+1]$ by a Mostowski absoluteness argument. In the end, the condition $q_{n-1}$ is a common lower bound of the conditions $p_i$ for $i\in b$ as required.
\end{proof}

\begin{corollary}
\label{gammacorollary}
Let $n\geq 2$ be a natural number and $\Gamma_{n+1}$ be the hypergraph on $\power(\gw)$ of $n+1$-tuples which form a modulo finite partition of $\gw$. In $n+1, n$-centered, cofinally transcendentally balanced forcing extensions of the symmetric Solovay model derived from $\kappa$, the chromatic number of $\Gamma_{n+1}$ is uncountable.
\end{corollary}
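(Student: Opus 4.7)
The corollary follows from Theorem~\ref{gammatheorem} by a routine Baire category argument, entirely parallel to the way Corollary~\ref{thetacorollary} is extracted from Theorem~\ref{thetatheorem}.

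The plan is as follows. I would work in a fixed $n+1,n$-centered cofinally transcendentally balanced Suslin extension $W'$ of the symmetric Solovay model, and suppose toward contradiction that there exists in $W'$ a total $\Gamma_{n+1}$-coloring $c\colon\power(\gw)\to\gw$. The color classes $c^{-1}(k)$ for $k\in\gw$ partition $\power(\gw)$ into countably many pieces. The Baire category theorem for Polish spaces is provable in ZF+DC, and DC holds throughout the Solovay model and its Suslin forcing extensions, so the Polish space $\power(\gw)$ cannot be covered by countably many meager sets. Hence some color class $A=c^{-1}(k)$ must be nonmeager in $W'$.

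Next, I would apply Theorem~\ref{gammatheorem} to the nonmeager set $A$ and obtain $n+1$ sets $a_0,\dots,a_n\in A$ which modulo finite form a partition of $\gw$. These sets are automatically pairwise distinct: the generic tuple $x$ produced in the proof of the theorem lives in the space $X\subset\power(\gw)^{n+1}$ of honest partitions, and a standard genericity argument shows that all $n+1$ coordinates are infinite and pairwise distinct (those tuples in $X$ with an empty or a repeated coordinate form a meager set), a property preserved by the finite modifications used to produce the $a_i$. Consequently $\{a_0,\dots,a_n\}$ is an actual $\Gamma_{n+1}$-hyperedge contained in the color class $A$, on which $c$ is constant by construction, contradicting the assumption that $c$ is a $\Gamma_{n+1}$-coloring.

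No genuine obstacle is present: everything beyond Theorem~\ref{gammatheorem} is formal, and the only soft point to check is the Baire category step, which rests on nothing more than DC and the Polishness of $\power(\gw)$. The analogous derivations of Corollary~\ref{thetacorollary} from Theorem~\ref{thetatheorem} and of Corollary~\ref{deltacorollary} from the preceding $S_\infty$ theorem would be written in the same manner.
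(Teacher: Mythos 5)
Your proposal is correct and matches the intended derivation: the paper states Corollary~\ref{gammacorollary} as an immediate consequence of Theorem~\ref{gammatheorem}, leaving implicit exactly the routine argument you spell out (under ZF+DC some color class of a putative countable coloring is nonmeager by Baire category, and the theorem then yields a $\Gamma_{n+1}$-hyperedge inside it). Your extra remark on the distinctness of the $n+1$ sets is a reasonable detail to note, but otherwise there is nothing beyond the paper's own (unwritten) proof.
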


\section{Examples II}
\label{exampleIIsection}

The whole enterprise in the previous sections would be pointless if there were no substantial transcendentally balanced posets. In this section, I will produce or point out a number of examples in this direction. At first, I consider posets or classes of posets known from previous work.

\begin{proposition}
Every placid Suslin poset is transcendentally balanced.
\end{proposition}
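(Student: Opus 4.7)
The plan is to unpack the definition of placid balance from \cite[Chapter 6]{z:geometric} and observe that the amalgamation hypothesis it requires is already guaranteed by mutual transcendence. Recall that a virtual condition $\bar p$ in a Suslin poset $P$ is placidly balanced precisely when, for every pair of generic extensions $V[G_0], V[G_1]$ inside some ambient extension satisfying $V[G_0]\cap V[G_1]=V$, any two conditions $p_0\in V[G_0]$ and $p_1\in V[G_1]$ stronger than $\bar p$ have a common lower bound; the poset $P$ is placid when placidly balanced virtual conditions form a dense collection below every condition of $P$.

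Given this, the verification is essentially a direct substitution. First I would fix a placid Suslin poset $P$ and an arbitrary condition $p\in P$, and use placidness to select a placidly balanced virtual condition $\bar p\leq p$. Then, given an ambient forcing extension containing mutually transcendental generic extensions $V[G_0], V[G_1]$ and conditions $p_0\in V[G_0]$, $p_1\in V[G_1]$ both stronger than $\bar p$, I would apply Proposition~\ref{iproposition} to conclude that $V[G_0]\cap V[G_1]=V$. This is exactly the input hypothesis in the definition of placid balance, so $p_0$ and $p_1$ have a common lower bound. This shows that $\bar p$ is transcendentally balanced, and since such $\bar p$ is dense below each condition of $P$, the poset $P$ is transcendentally balanced as claimed.

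The main (and essentially only) obstacle is lining up the precise hypothesis in the definition of placid balance in \cite{z:geometric} with the conclusion of Proposition~\ref{iproposition}. If one adopts a formulation of placidness that is slightly stronger than the bare intersection-of-models condition (for example, one requiring agreement on pinned Borel equivalence classes between the two extensions), then an additional verification would be needed. However, the material already developed in Section~\ref{pairsection}, in particular Corollary~\ref{kscorollary} and Proposition~\ref{metricproposition}, shows that mutually transcendental extensions enjoy strong agreement properties well beyond $V[G_0]\cap V[G_1]=V$, so any reasonable strengthening used in the definition of placidness can be verified from mutual transcendence in the same way.
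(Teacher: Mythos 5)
Your proposal is correct and follows the same route as the paper: cite Proposition~\ref{iproposition} to get $V[G_0]\cap V[G_1]=V$ from mutual transcendence, then invoke the placidity of a dense set of virtual conditions to obtain the common lower bound. The definition of placidity used in the paper is exactly the bare intersection-of-models condition, so the extra caveat in your last paragraph is not needed.
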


\noindent This class of examples is very broad: it includes among others posets adding a Hamel basis for a Polish space over a countable field, posets adding maximal acyclic subsets to Borel graphs, or posets adding a selector to pinned Borel equivalence relations classifiable by countable structures.

\begin{proof}
Recall \cite[Definition 9.3.1]{z:geometric} that a poset $P$ is placid if below every condition $p\in P$ there is a virtual balanced condition $\bar p\leq p$ which is placid: whenever $V[G_0]$ and $V[G_1]$ are generic extensions such that $V[G_0]\cap V[G_1]=V$ and $p_0\in V[G_0]$ and $p_1\in V[G_1]$ are conditions stronger than $\bar p$, then $p_0, p_1$ are compatible. Now, if $V[G_0], V[G_1]$ are mutually transcendental extensions of the ground model, then $V[G_0]\cap V[G_1]=V$ by Proposition~\ref{iproposition}, and therefore a placid virtual condition also transcendentally balanced. The proposition follows.
\end{proof}

\begin{proposition}
Let $X$ be a $K_\gs$ Polish field with a countable subfield $F$. The poset adding a transcendence basis to $X$ over $F$ is transcendentally balanced.
\end{proposition}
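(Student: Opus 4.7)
The plan is to follow the balanced virtual condition calculus for similar ``basis'' posets and classify the balanced virtual conditions for $P$ as those determined by a virtual transcendence basis $\bar B$ with $\bar B\cap V$ a transcendence basis of $X\cap V$ over $F$ and $\bar B\setminus V$ algebraically independent over the subfield $X\cap V$. The poset $P$ itself consists of countable $F$-algebraically independent subsets of $X$ ordered by reverse inclusion, so $P$ is $\sigma$-closed and a generic filter produces a transcendence basis of $X$ over $F$. A finite condition $p$ lies below $\bar p$ precisely when $p$ sits, in the virtual sense, inside the basis $\bar B$.

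Given this classification, transcendental balance reduces to the following amalgamation problem. Let $V[G_0]$ and $V[G_1]$ be mutually transcendental, and let $p_0\in V[G_0]$ and $p_1\in V[G_1]$ be finite subsets of $\bar B$. I will show that $p_0\cup p_1\subset X$ is algebraically independent over $F$, giving the needed common lower bound in $P$. Suppose for contradiction that some nonzero polynomial $p$ over $F$ yields a relation $p(\bar y)=0$ on a tuple $\bar y\subset p_0\cup p_1$ of distinct elements. By Proposition~\ref{iproposition}, $V[G_0]\cap V[G_1]=V$, so $\bar y$ splits uniquely as $\bar y=\bar y_V\sqcup\bar y_0\sqcup\bar y_1$ with $\bar y_V\subset V$, $\bar y_0\subset V[G_0]\setminus V$, and $\bar y_1\subset V[G_1]\setminus V$. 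Since $\bar y_V$ is algebraically independent over $F$, the substitution $\bar Y_V\mapsto\bar y_V$ is injective on $F[\bar Y_V,\bar Y_0,\bar Y_1]$, so the polynomial $\tilde p(\bar Y_0,\bar Y_1)$ obtained from $p$ by substituting $\bar y_V$ for $\bar Y_V$ is not identically zero, and its coefficients lie in $F(\bar y_V)\subset X\cap V$.

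Now I invoke Corollary~\ref{jcorollary} on $\tilde p$: mutual transcendence of $V[G_0]$ and $V[G_1]$ produces a tuple $\bar y_0'\in V$ arbitrarily close to $\bar y_0$ with $\tilde p(\bar y_0',\bar y_1)=0$. Writing $\tilde p=\sum_\alpha c_\alpha(\bar Y_0)\bar Y_1^\alpha$ and picking an index $\alpha$ for which $c_\alpha$ is not the zero polynomial, algebraic independence of $\bar y_0$ over $F(\bar y_V)$ gives $c_\alpha(\bar y_0)\neq 0$ in $X$; by continuity $c_\alpha(\bar y_0')\neq 0$ whenever $\bar y_0'$ is close enough to $\bar y_0$. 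Then $\tilde p(\bar y_0',\bar Y_1)$ is a nonzero polynomial over $F(\bar y_V,\bar y_0')\subset X\cap V$ which vanishes at $\bar y_1$. But $\bar y_1\subset\bar B\setminus V$ is algebraically independent over $X\cap V$ by the balance classification, a contradiction.

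The main obstacle I anticipate is the balance classification itself: one must verify that below every condition in $P$ sits a balanced virtual condition represented by a basis $\bar B$ of the form above, and that the relation ``$p\leq\bar p$'' carries the intended virtual meaning. This is the transcendence-theoretic analogue of the Hamel basis analysis in \cite{z:geometric}, and the passage from linear to algebraic dependence requires setting up the virtual basis so that it avoids all algebraic relations over $X\cap V$, not merely those over $F\cup(\bar B\cap V)$. Once that classification is in place, the application of Corollary~\ref{jcorollary} sketched above supplies the only new ingredient beyond the standard balance machinery.
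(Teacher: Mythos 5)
Your proposal is correct and takes essentially the paper's route: the paper simply reviews the balance proof of \cite[Theorem 6.3.9]{z:geometric}, observes that the only feature of mutual genericity used there is the polynomial--replacement property, and notes that Corollary~\ref{jcorollary} (with Proposition~\ref{iproposition} giving $V[G_0]\cap V[G_1]=V$) supplies that property for mutually transcendental pairs --- your explicit amalgamation, including the use of the ``arbitrarily close'' clause to keep a leading coefficient nonzero, is exactly that argument written out. One correction to your setup: the balanced virtual conditions here are (equivalent to) $\langle\coll(\gw,X),\check B\rangle$ for $B$ a ground-model transcendence basis of $X\cap V$ over $F$, and the conditions $p_0,p_1$ to be amalgamated are countable algebraically independent sets \emph{containing} $B$ (in extensions where $X\cap V$ has been made countable), not finite subsets of a virtual basis $\bar B$; this is harmless, since then $p_i\cap V=B$ and $p_i\setminus B$ is automatically algebraically independent over $X\cap V$ (being independent over $F(B)$, of which $X\cap V$ is an algebraic extension), so your computation transfers verbatim.
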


\begin{proof}
Reviewing the proof of \cite[Theorem 6.3.9]{z:geometric} it becomes clear that the only feature of mutually generic extensions $V[G_0]$ and $V[G_1]$ there is that if $p$ is a multivariate polynomial with coefficients in $F$, $\vec x_0\in X\cap V[G_0]$ and $\vec x_1\in X\cap V[G_1]$ are tuples such that $p(\vec x_0, \vec x_1)=0$, then there are tuples $\vec x_0', \vec x_1'$ in the ground model such that
$p(\vec x'_0, \vec x_1)=p(\vec x_0, \vec x_1')=0$. However, this is satisfied for mutually transcendental extensions $V[G_0], V[G_1]$ as well by Corollary~\ref{jcorollary}. This completes the proof.
\end{proof}

\begin{proposition}
Let $E$ be an equivalence relation on a Polish space of one of the following types:

\begin{enumerate}
\item $E$ is $K_\gs$;
\item for some sequence $\langle Y_n, d_n\colon n\in\gw\rangle$ of countable metric spaces, $E$ is the equivalence relation on $X=\prod_nY_n$ connecting points $x_0, x_1$ if the distances $d_n(x_0(n), x_1(n))$ tend to zero as $n$ tends to infinity.
\end{enumerate}

\noindent  The poset adding a selector to $E$ is transcendentally balanced.
\end{proposition}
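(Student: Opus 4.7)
The plan is to adapt the standard balance argument for selector posets to our weaker perpendicularity notion, using Corollary~\ref{kscorollary} and Proposition~\ref{metricproposition} as drop-in replacements for the ground-model-witness consequence of mutual genericity. First, I would appeal to the classification of balanced virtual conditions in \cite[Chapter 6]{z:geometric}: both classes of equivalence relations considered here are pinned, and below every condition $p$ in the selector poset $P_E$ there is a balanced virtual condition $\bar p \leq p$ canonically represented by a total $E$-invariant ground-model selector $f$ on $X \cap V$, in the sense that $f(x)\mathrel{E}x$ and $x\mathrel{E}y$ implies $f(x)=f(y)$. A condition $q \in P_E$ refines $\bar p$ exactly when $q$ is a countable partial selector that, on every $E$-class meeting $V$, takes the prescribed value $f(x)$ for any $x\in V$ in that class.

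Now let $V[G_0], V[G_1]$ be mutually transcendental inside some ambient extension, and let $p_0 \in V[G_0]$ and $p_1 \in V[G_1]$ be conditions stronger than $\bar p$. To produce a common lower bound it suffices to show that $p_0 \cup p_1$ is itself a partial selector: whenever $z_0 \in \dom(p_0)$ and $z_1 \in \dom(p_1)$ satisfy $z_0 \mathrel{E} z_1$, we must verify $p_0(z_0)=p_1(z_1)$. Fix such $z_0, z_1$. In case (1) apply Corollary~\ref{kscorollary}; in case (2) apply Proposition~\ref{metricproposition}. Either way we obtain a point $z \in X \cap V$ with $z\mathrel{E}z_0\mathrel{E}z_1$. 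Because the common $E$-class of $z_0, z_1, z$ meets $V$ at $z$, the defining property of $\bar p$ forces both $p_0(z_0)$ and $p_1(z_1)$ to coincide with $f(z)$, so $p_0 \cup p_1$ is a well-defined partial selector and hence a common lower bound of $p_0$ and $p_1$ in $P_E$.

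The two cases of the proposition differ only in which earlier result is cited, so a single unified proof suffices. I do not anticipate any genuine obstacle; the only step calling for care is unpacking the virtual-condition formalism of \cite[Chapter 5]{z:geometric} far enough to justify the canonical form of $\bar p$ and the compatibility constraint imposed on refining conditions on $E$-classes meeting $V$. This is essentially a reprise of the original balance verification for these posets, and the translation from mutual genericity to mutual transcendence is frictionless because the only place the stronger hypothesis was ever used in the original argument is precisely the ground-model-witness input that Corollary~\ref{kscorollary} and Proposition~\ref{metricproposition} now supply under mutual transcendence.
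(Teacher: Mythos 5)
Your proposal is correct and follows essentially the same route as the paper: note that $E$ is pinned so the balanced virtual conditions for the selector poset are (as in \cite[Theorem 6.4.5]{z:geometric}) given by total ground-model selectors, and then observe that the only use of mutual genericity in the compatibility verification is that an $E$-class represented in both $V[G_0]$ and $V[G_1]$ is represented in $V$, which mutual transcendence supplies via Corollary~\ref{kscorollary} in case (1) and Proposition~\ref{metricproposition} in case (2). The only difference is presentational: the paper cites the existing balance proof and names the single property to be replaced, while you unpack that verification explicitly.
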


\begin{proof}
Note that the equivalence relation $E$ is pinned (\cite[Chapter 17]{kanovei:book}, but it follows directly from Corollary~\ref{kscorollary} or Proposition~\ref{metricproposition}) and therefore \cite[Theorem 6.4.5]{z:geometric} applies. The only feature of mutually generic extensions $V[G_0]$ and $V[G_1]$ in the proof of the balance of $P$ is that every $E$-class represented both in $V[G_0]$ and $V[G_1]$ is represented in $V$. However, this feature holds true for mutually transcendental extensions by Corollary~\ref{kscorollary} or Proposition~\ref{metricproposition}.
\end{proof}

\noindent Now it is time to produce transcendentally balanced posets for some new and more difficult tasks. I will only look at coloring posets for hypergraphs of a certain type.

\begin{definition}
Let $X$ be a Polish space, and $\Gamma$ a hypergraph on $X$. $\Gamma$ is \emph{redundant} if for every set $a\subset X$, the set $\{x\in X\colon a\cup \{x\}\in\Gamma\}$ is countable.
\end{definition}

\begin{example}
The hypergraph $\Gamma$ on $\mathbb{R}$ of arity $3$ consisting of solutions to the equation $x^3+y^3+z^3-3xyz=0$ is redundant.
\end{example}

\begin{example}
The hypergraph $\Gamma$ on $\mathbb{R}^2$ consisting of vertices of equilateral triangles is redundant. A similar hypergraph on $\mathbb{R}^3$ is not redundant.
\end{example}

\begin{example}
Let $n\geq 2$ be a number. The hypergraph $\Gamma_n$ on $\power(\gw)$ consisting of $n$-tuples which modulo finite partition $\gw$ is redundant.
\end{example}

\begin{example}
Let $G$ be a Polish group with a countable dense subset $d\subset G$. Let $n\geq 2$ be a natural number. The hypergraph $\Sigma(G, n)$ consisting of all $n$-tuples whose product belongs to $d$ is redundant. Note that if $G$ is not abelian, then the product depends on the order of the elements, so one must say ``the product of all elements in some order belongs to $d$''.
\end{example}

\begin{example}
Let $G$ be a Polish group and $n\geq 2$ be a natural number. The hypergraph $\Theta(G, n)$ of all $2n$-tuples whose alternating product $g_0g_1^{-1}g_2g_3^{-1}\dots$ in some order is equal to $1$ is redundant.
\end{example}

\begin{theorem}
\label{bigtheorem}
Let $X$ be a $K_\gs$-Polish space and $\Gamma$ an $F_\gs$-hypergraph on $X$ of arity three (or four). Then there is a coloring forcing $P_\Gamma$ such that

\begin{enumerate}
\item $P_\Gamma$ is $\gs$-closed Suslin poset;
\item for every number $n\in\gw$, the poset $P_\Gamma$ is $n, 3$-centered (or $n, 4$-centered, respectively);
\item the union of the generic filter is forced to be a total $\Gamma$-coloring on $X$ with countable range;
\item if the Continuum Hypothesis holds, then the poset $P_\Gamma$ is transcendentally balanced.
\end{enumerate}
\end{theorem}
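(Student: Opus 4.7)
I would let $P_\Gamma$ consist of countable partial $\Gamma$-colorings $c\colon X\to\gw$, ordered by reverse inclusion. Then (1) is immediate since unions of countable chains preserve both countability and the $\Gamma$-coloring property. For (2) in the arity-$3$ case, suppose $\{c_i:i<n\}$ are conditions such that any three of them admit a common lower bound. Their union $c=\bigcup_i c_i$ is a well-defined function: pairwise agreement on common arguments is witnessed inside any triple containing the pair. Moreover $c$ is a $\Gamma$-coloring, because any potential monochromatic $\Gamma$-edge has its three vertices distributed among at most three of the $c_i$, contradicting the compatibility of that triple. The arity-$4$ case is parallel. Density of $\{c\colon x\in\dom c\}$, needed for (3), follows because any countable condition uses countably many colors, so infinitely many fresh colors are available for $x$ that avoid coloring any $\Gamma$-partner of $x$ in $\dom c$ monochromatically.

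\textbf{Strategy for (4).} This is the substantive part. Under CH, enumerate $X\cap V=\{x_\ga:\ga<\aone\}$ and fix an $F_\gs$-decomposition $\Gamma=\bigcup_n\Gamma_n$ with each $\Gamma_n$ closed, hence $K_\gs$ since $X$ is. I would construct a virtual condition $\bar p\leq p$ by a length-$\aone$ recursion producing a total coloring $\bar p\colon X\cap V\to\aone$. At stage $\ga$, choose $\bar p(x_\ga)$ so as to (i) avoid any monochromatic $\Gamma$-edge among the $V$-points colored so far, and (ii) fulfill a bookkeeping list of ``reservations'': each reservation states, for some tuple of already-colored $V$-points in the $K_\gs$-shadow of a $\Gamma_n$-edge through $x_\ga$, a color-constraint preventing that shadow from being completed to a monochromatic edge by any point added in a future mutually transcendental extension. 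CH supplies $\aone$ stages for $\aone$-many reservation tasks, with at most countably many active per stage, leaving room for a valid color choice.

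\textbf{Verification of balance.} Given mutually transcendental $V[G_0], V[G_1]$ and $p_0, p_1$ extending $\bar p$, Proposition~\ref{iproposition} gives $\dom(p_0)\cap\dom(p_1)\subset V$, where $\bar p$ dictates a common value; hence $p_0\cup p_1$ is a function. A hypothetical monochromatic $\Gamma$-edge wholly inside one $\dom(p_i)$ contradicts $p_i$'s coloring property, so any such edge $e$ must split across the two extensions and lie in some $\Gamma_n$. I would then apply the proposition preceding Corollary~\ref{jcorollary} iteratively, swapping sides via the symmetry of mutual transcendence, to extract a companion $V$-edge $e'\in\Gamma_n$ through $V$-shadow points on each side of the original split. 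The reservation imposed during the construction of $\bar p$ at the appropriate stage then contradicts the extracted monochromaticity pattern.

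\textbf{Main obstacle.} The heart of the difficulty is designing the reservations so as to both (a) be enforceable by finitely many per-stage constraints under CH, and (b) actually block every possible split-pattern in every mutually transcendental cross-extension configuration. For arity $3$ with a $2$-$1$ split, two successive $K_\gs$-transfers suffice to pull the split edge into $V$, and reservations on colored $V$-pairs do the job. For arity $4$ with a $2$-$2$ split, the transfers alternate between the two sides, and one must reserve across both sides simultaneously, which is just within the reach of the $\aone$-recursion. Beyond arity $4$, the scheme seems to break down, matching the paper's comment that higher arities ``present problems that I find difficult to overcome.''
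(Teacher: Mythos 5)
Your poset is not the paper's poset, and the differences are fatal at items (3) and (4). With arbitrary countable domains and reverse inclusion, the density of $\{c\colon x\in\dom(c)\}$ genuinely fails: the claim that ``infinitely many fresh colors are available'' is wrong, since a countable condition can use every color of $\gw$ in a blocking way. Concretely, for $\Gamma_3$ on $\power(\gw)$ and a fixed $x$, choose for each $k\in\gw$ a pair $y_k, z_k$ with $y_k\cup z_k=\gw\setminus x$ and $y_k\cap z_k=0$ modulo finite, and let $c(y_k)=c(z_k)=k$; this $c$ is a condition in your poset, yet no extension of $c$ can contain $x$ in its domain, so the generic union is not forced to be total. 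The paper avoids this by exploiting redundancy of $\Gamma$ (a hypothesis your proposal never uses): conditions are required to have $\Gamma$-closed domains, and the ordering is not reverse inclusion but the ideal-based one of Definitions~\ref{pgdefinition} and~\ref{pg4definition} (a Borel ideal $\mathcal{I}$ that is not countably generated, the ``tight'' injectivity-on-classes requirement, and in arity four the ``slick'' clause). It is exactly this ordering which yields the compatibility criteria of Propositions~\ref{cocoproposition} and~\ref{ciciproposition}, from which Suslinness, $n,3$- (resp.\ $n,4$-)centeredness, and density all follow; with plain reverse inclusion the forbidden-color sets attached to new points of the $\Gamma$-closure are uncontrolled and the amalgamation in those propositions breaks down.

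The strategy for (4) also does not match what can work. A ``total coloring $\bar p\colon X\cap V\to\aone$'' is not a virtual condition of a poset whose colorings take values in $\gw$, and, more substantively, reservations recorded on ground-model points cannot constrain the colors that conditions in later extensions assign to new points, because reverse inclusion transfers no such information; in arity four nothing then rules out a monochromatic edge split $2$--$2$ between $\dom(p_0)\setminus V$ and $\dom(p_1)\setminus V$. In the paper the balanced virtual conditions are classified by total $\Gamma$-colorings $c\colon X\to\gw\times\gw$ paired with $\coll(\gw,X)$ (Proposition~\ref{4balance}); the amalgamation of $p_0\in V[G_0]$, $p_1\in V[G_1]$ below such a $c$ uses two ingredients you do not have: the $\Gamma$-closure of domains (which in arity three forces any split edge entirely into one side), and Proposition~\ref{equiproposition}, which says that for mutually transcendental extensions $E(X\cap V,\Gamma)$ and $E(X\cap V[G_1],\Gamma)$ agree on $X\cap V[G_0]$ -- proved by a compactness argument about ``walks'', not by iterating the two-variable $K_\gs$ transfer as you suggest. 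This lemma, combined with the tight item of $p_i\leq c$ (injectivity and $\mathcal{I}$-smallness on equivalence classes), is what kills the $2$--$2$ split and verifies the compatibility criterion. CH enters only at the very end, to build a total coloring below a given condition by a decreasing $\gw_1$-chain using $\gs$-closure and density -- no bookkeeping of reservations is involved. As written, your construction does not satisfy (3), and your balance argument has no mechanism by which the ordering enforces the constraints it would need.
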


\noindent There are many hypergraphs (such as the hypergraph of rectangles on $\mathbb{R}^2$ \cite{komjath:three}) of arity four for which the Continuum Hypothesis assumption in the last item is necessary. I do not know if it is possible to eliminate the CH assumption for arity three in general. Arities higher than four present obstacles that I know how to overcome only in the case of algebraic hypergraphs ???

It is now possible to prove the theorems from the introduction. For Theorem~\ref{1theorem}, observe that for a $K_\gs$-Polish group $G$, the hypergraph $\Delta(G)$ is redundant and $F_\gs$ of arity four. Theorem~\ref{bigtheorem} applies to provide a $\gs$-closed Suslin coloring forcing $P$ which is transcendentally balanced under CH. It follows from Corollary~\ref{deltacorollary} that in the $P$-extension of the choiceless Solovay model, the chromatic number of $\Delta(G)$ is countable while that of $\Delta(S_\infty)$ is not. For Theorem~\ref{2theorem}, observe that the graph $\Gamma_3$ is $F_\gs$ on the compact space $\power(\gw)$, and it is redundant. Theorem~\ref{bigtheorem} provides a $4, 3$-centered coloring poset $P$ which is transcendentally balanced under CH. Corollary~\ref{gammacorollary} then shows that in the $P$-extension of the choiceless Solovay model, $\Gamma_3$ is countably chromatic while $\Gamma_4$ is not. The proof of Theorem~\ref{3theorem} is similar. For Theorem~\ref{4theorem}, note that Theorem~\ref{bigtheorem} provides a coloring poset $P$ which is transcendentally balanced under CH. Corollary~\ref{thetacorollary} then shows that in the $P$-extension of the choiceless Solovay model, $\Gamma_4$ is countably chromatic while $\Theta_4$ is not.  

\subsection{Construction in arity three}

The construction of the coloring poset shares many similarities in both arities, but it is slightly easier in arity three. The following concept is shared.

\begin{definition}
Let $\Gamma$ be a redundant hypergraph on a Polish space $X$.

\begin{enumerate}
\item A set $b\subset X$ is $\Gamma$-\emph{closed} if for every set $a\subset b$ the countable set  $\{x\in X\colon a\cup \{x\}\in\Gamma\}$ is a subset of $b$;
\item if a set $b\subset X$ is $\Gamma$-closed, define the equivalence relation $E(b, \Gamma)$ on $X\setminus b$ as the smallest equivalence containing all pairs $\{x_0, x_1\}$ such that for some set $a\subset b$, $a\cup\{x_0, x_1\}\in\Gamma$.
\end{enumerate}
\end{definition}

Note that if the set $b$ is countable then all classes of the relation $E(b, \Gamma)$ are countable. If in addition the hypergraph $\Gamma$ is Borel, then so is the relation $E(b, \Gamma)$.
In both arities, the definition of the poset uses as a parameter a Borel ideal $\mathcal{I}$ on $\gw$ which contains all singletons and is not generated by countably many sets. Beyond these requirements the choice of $\mathcal{I}$ appears to be immaterial.

\begin{definition}
\label{pgdefinition}
Let $X$ be a $K_\gs$ Polish space and $\Gamma$ be a redundant $F_\gs$ hypergraph of arity three on $X$. The \emph{coloring poset} $P_\Gamma$ consists of all partial $\Gamma$ colorings $p\colon X\to\gw$ whose domain is a countable $\Gamma$-closed subset of $X$. The ordering is defined by $p_1\leq p_0$ if $p_0\subset p_1$ and for every $E(\Gamma, \dom(p_0))$-class $a\subset\dom(p_1)$, $p_1''a\in\mathcal{I}$.
\end{definition}

\begin{proposition}
$\leq$ is a transitive $\gs$-closd relation on $P_\Gamma$.
\end{proposition}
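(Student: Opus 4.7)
The plan is to isolate the single combinatorial fact that drives both halves of the statement, which I will call an \emph{absorption lemma}: if $b_0\subset b_1$ are both $\Gamma$-closed, then every $E(\Gamma,b_0)$-class that meets $b_1$ is already a subset of $b_1$. I would prove this by induction along the chains defining the equivalence: an elementary edge $\{z,z'\}$ inside an $E(\Gamma,b_0)$-class uses some anchor $c\subset b_0\subset b_1$; if $z\in b_1$, then $c\cup\{z\}\subset b_1$ and $\Gamma$-closedness of $b_1$ applied to this set forces the other endpoint $z'$ into $b_1$. Only the redundancy of $\Gamma$ (so that the set of such $z'$ is countable) and the finite arity are used.

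For transitivity, suppose $p_2\le p_1\le p_0$ and let $a$ be an $E(\Gamma,\dom(p_0))$-class with $a\subset\dom(p_2)$. Apply the absorption lemma with $b_0=\dom(p_0)$ and $b_1=\dom(p_1)$ to get a dichotomy. In the easy case $a\subset\dom(p_1)$, the hypothesis $p_1\le p_0$ gives $p_1''a\in\mathcal I$, and since $p_1\subset p_2$ one has $p_2''a=p_1''a\in\mathcal I$. In the other case, $a\cap\dom(p_1)=\emptyset$, so any chain of $E(\Gamma,\dom(p_0))$-edges among elements of $a$ already uses intermediate points outside $\dom(p_1)$, and the anchors (in $\dom(p_0)\subset\dom(p_1)$) witness that the same chain realizes $E(\Gamma,\dom(p_1))$-equivalence. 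Hence $a$ is contained in a single $E(\Gamma,\dom(p_1))$-class $b$. A second application of the absorption lemma, this time with $b_0=\dom(p_1)$ and $b_1=\dom(p_2)$, gives $b\subset\dom(p_2)$, so $p_2\le p_1$ yields $p_2''b\in\mathcal I$ and hence $p_2''a\subset p_2''b\in\mathcal I$ by subset-closure of the ideal.

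For $\sigma$-closedness, the candidate common lower bound is $p=\bigcup_n p_n$. Its domain $\bigcup_n\dom(p_n)$ is countable; it is $\Gamma$-closed because $\Gamma$-closedness only tests finite anchor sets (by the finite arity), each of which lies in some $\dom(p_n)$. The function $p$ is well-defined and a $\Gamma$-coloring by the same ``finite support'' argument applied to hyperedges of $\Gamma$. To check $p\le p_n$ for each $n$, let $a$ be an $E(\Gamma,\dom(p_n))$-class with $a\subset\dom(p)$; pick any $x\in a$ and choose $m\ge n$ with $x\in\dom(p_m)$. The absorption lemma with $b_0=\dom(p_n)$ and $b_1=\dom(p_m)$ gives $a\subset\dom(p_m)$, and since $p_m\le p_n$ (by transitivity, already established) we conclude $p''a=p_m''a\in\mathcal I$.

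I expect the main obstacle to be the second transitivity case, where $a$ is disjoint from $\dom(p_1)$: there one has to produce the intermediate class $b$ and show that it is captured by $\dom(p_2)$. Both tasks are handled by the same absorption principle, used once to merge $a$ into an $E(\Gamma,\dom(p_1))$-class and once to pull that class into $\dom(p_2)$. It is worth noting that the proof only needs $\mathcal I$ to be closed under subsets; no form of $\sigma$-additivity of $\mathcal I$ is used, which is consistent with the parameter hypothesis that $\mathcal I$ is merely a Borel ideal containing singletons and not generated by countably many sets.
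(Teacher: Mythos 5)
Your proof is correct and follows essentially the same route as the paper's: the dichotomy you package as the absorption lemma (an $E(\Gamma,b_0)$-class meeting a larger $\Gamma$-closed set $b_1$ is contained in it) is exactly what the paper invokes via the $\Gamma$-closure of $\dom(p_1)$, both in the transitivity argument and for the union of a descending chain. You merely spell out the detail the paper leaves implicit in the second transitivity case (pulling the enclosing $E(\dom(p_1),\Gamma)$-class into $\dom(p_2)$ and then using subset-closure of $\mathcal I$), which is a faithful completion rather than a different approach.
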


\begin{proof}
For the transitivity, suppose that $p_2\leq p_1\leq p_0$ are conditions in $P_\Gamma$, and work to show that $p_2\leq p_0$ holds. Clearly, $p_2\subset p_0$. Let $C\subset X$ be a $E(p_0, \Gamma)$-equivalence class. By the $\Gamma$-closure of $\dom(p_1)$, $C$ is either a subset of $p_1$, or it is disjoint from $\dom(p_1)$ and then it is a subset of a single $E(\dom(p_1), \Gamma)$-class. In the former case, $p_2''C=p_1''C\in\mathcal{I}$ as $p_1\leq p_0$ holds. In the latter case, $p_2''C\in\mathcal{I}$ as $p_2\leq p_1$ holds. This concludes the proof of $p_2\leq p_0$ and the transitivity of the $\leq$ relation.

 For the $\gs$-closure, let $\langle p_n\colon n\in\gw\rangle$ be a descending chain of conditions in $P_\Gamma$. Let $q=\bigcup_np_n$ and argue that $q\in P_\Gamma$ is a lower bound of the chain. It is clear that $\dom(q)$ is $\Gamma$-closed and $q$ is a $\Gamma$-coloring. Now, fix $n\in\gw$ and work to show that $q\leq p_n$ holds. Clearly, $p_n\subset q$. Now let $C\subset\dom(q)$ be an $E(\dom(p_n),\Gamma)$-equivalence class. By the $\Gamma$-closures of the domains of all conditions mentioned, there must be a number $m\geq n$ such that $C\subset \dom(p_{k+1}\setminus p_k)$ and then $q''C=p_{k+1}''C\in\mathcal{I}$ follows from $p_{k+1}\leq p_n$.

\end{proof}

\noindent The main point in the definition of the coloring poset is that there is a precise and generous criterion for compatibility of conditions in it.

\begin{proposition}
\label{cocoproposition}
Let $a\subset P_\Gamma$ be a finite set. The following are equivalent:

\begin{enumerate}
\item $a$ has a common lower bound;
\item for every point $z\in X$, $a$ has a common lower bound whose domain contains $z$;
\item $\bigcup a$ is a function, a $\Gamma$-coloring, and for distinct conditions $p_0, p_1\in a$, $p_0''C\in\mathcal{I}$ where $C\subset X$ is any $E(\dom(p_1), \Gamma)$-class.
\end{enumerate}
\end{proposition}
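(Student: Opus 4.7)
The implications $(2) \Rightarrow (1)$ (specialize to any $z \in X$) and $(1) \Rightarrow (3)$ are routine. For the latter, given a common lower bound $\bar q \in P_\Gamma$ of $a$, note that $\bigcup a \subset \bar q$ is automatically a function and a $\Gamma$-coloring. For distinct $p_0, p_1 \in a$ and any $E(\dom(p_1), \Gamma)$-class $C$, I would chain along the defining pairs of the equivalence to show, using the $\Gamma$-closedness of $\dom(\bar q)$ together with $\dom(p_1) \subset \dom(\bar q)$, that $C$ is either contained in or disjoint from $\dom(\bar q)$. In the contained case, $\bar q''C \in \mathcal{I}$ by $\bar q \leq p_1$, so $p_0''(C \cap \dom(p_0)) \subset \bar q''C \in \mathcal{I}$; in the disjoint case, $p_0''(C \cap \dom(p_0)) = \emptyset$.

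The substantive direction is $(3) \Rightarrow (2)$. Fix $z \in X$, write $q = \bigcup a$ with domain $V_0 = \dom(q)$, and let $B$ denote the $\Gamma$-closure of $V_0 \cup \{z\}$, a countable $\Gamma$-closed set; set $V = B \setminus V_0$. The plan is to construct $\bar q : B \to \gw$ extending $q$ that is a $\Gamma$-coloring and satisfies $\bar q \leq p_i$ for each $p_i \in a$. The ordering requirement unfolds as follows: for each $E(\dom(p_i), \Gamma)$-class $C \subset B$, one has $\bar q''C = q''(C \cap V_0) \cup \bar q''(C \cap V)$. Since $C \cap \dom(p_i) = \emptyset$, the first summand equals $\bigcup_{j \neq i} p_j''(C \cap \dom(p_j))$, a finite union of $\mathcal{I}$-sets by clause~(3), hence in $\mathcal{I}$. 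Thus the task reduces to arranging $\bar q''(C \cap V) \in \mathcal{I}$ for every such $C$.

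To construct $\bar q$ on $V$, I would use the non-countable-generation of $\mathcal{I}$ to fix an infinite $I \in \mathcal{I}$ such that $I \setminus \rng(q)$ is infinite, then enumerate $V = \{y_n : n \in \gw\}$ and assign $\bar q(y_n)$ recursively to be a fresh element of $I \setminus \rng(q)$ (distinct from all previously chosen values). The resulting $\bar q$ is injective on $V$ with image disjoint from $\rng(q)$; consequently every hyperedge of $B$ is non-monochromatic---hyperedges within $V_0$ are handled by $q$, mixed hyperedges by the disjointness of image sets, and hyperedges within $V$ by injectivity---while the ordering requirement follows since $\bar q''(C \cap V) \subset I \in \mathcal{I}$. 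The hardest piece will be securing such an $I$ in the edge case when $\rng(q)$ is cofinite in $\gw$: this is resolved either by a preliminary reduction to the dense subposet of $P_\Gamma$ whose conditions use sparse ranges, or by a more elaborate recursion reusing colors from $\rng(q)$ while carefully avoiding, at each step, the countably many forbidden colors arising from monochromatic pairs in already-colored points---a bookkeeping argument supported by the redundancy of $\Gamma$.
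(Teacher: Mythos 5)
Your skeleton matches the paper's: the easy implications, then for (3)$\Rightarrow$(2) an extension of $\bigcup a$ to a countable $\Gamma$-closed superset containing $z$, with the ordering requirement reduced (correctly) to controlling the colors of the new points, and the coloring requirement handled by injectivity plus conflict-avoidance on the new points. The gap is in how the new colors are chosen. Your main construction needs an infinite $I\in\mathcal{I}$ with $I\setminus\rng(\bigcup a)$ infinite, and such a set need not exist: a bijection of a countable $\Gamma$-closed set onto $\gw$ is a legitimate condition, so $\rng(\bigcup a)$ can be all of $\gw$; and even when the range is coinfinite, its complement may contain no infinite member of $\mathcal{I}$ (e.g.\ for an ideal of the form $\mathrm{Fin}\oplus\mathcal{J}$, which is Borel, contains singletons, and is not countably generated). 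So the ``edge case'' you flag is not an edge case but the general situation, and neither fallback you offer repairs it. The first fails outright: ranges only grow under the ordering of $P_\Gamma$ (conditions extend as functions), so conditions with sparse range are not dense below a condition of full range, and in any case the proposition quantifies over arbitrary finite $a\subset P_\Gamma$. The second is exactly where the actual work lies, and as stated it omits the two points that make it go through.

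What the paper does instead is reuse colors: for each new point $x$ it forms the forbidden set $d_x=\bigcup\{p_0''C\colon p_0\in a$, $C$ the $E(\dom(p_1),\Gamma)$-class of $x$ for some $p_1\in a$ distinct from $p_0\}$, which lies in $\mathcal{I}$ precisely by clause (3); it then uses the hypothesis that $\mathcal{I}$ is not countably generated to fix a single set $d\in\mathcal{I}$ not covered modulo finite by any $d_x$, and colors the new points injectively with values in $d\setminus d_x$. Membership of all new colors in the one set $d\in\mathcal{I}$ is what gives the ordering requirement (your decomposition of $\bar q''C$ then applies verbatim), and avoidance of $d_x$ is what kills the only dangerous hyperedge configuration (one new point together with two old points lying in domains of two distinct conditions and sharing a color; note such a pair of old points cannot lie in a single $\dom(p)$ by $\Gamma$-closedness). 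Your ``careful bookkeeping avoiding countably many forbidden colors'' does not by itself suffice, because the pool of usable colors must simultaneously be kept inside a fixed member of $\mathcal{I}$; identifying the forbidden sets as $\mathcal{I}$-sets via (3) and invoking non-countable generation of $\mathcal{I}$ to escape them all within one $\mathcal{I}$-pool are the missing steps you would need to supply.
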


\begin{proof}
It is clear that (2) implies (1) and (1) implies (3). The significant direction is (3) implies (1). Suppose that (3) holds and $x\in X$ is an arbitrary point. Let $M$ be a countable elementary submodel of a large enough structure containing $\Gamma$, $a$, and $z$ in particular. Let $e=M\cap X\setminus\bigcup_{p\in a}\dom(p)$. For each point $x\in e$, let $d_x=\bigcup\{p_0''C\colon p_0\in a$ and for some $p_1\in a$ distinct from $p_0$, $C$ is the $E(\dom(p_1), \Gamma)$-class of $x\}$; by (3), this is a set in $\mathcal{I}$.  Let $d\subset\gw$ be a set in the ideal $\mathcal{I}$ which is not modulo finite covered by any set $d_x$ for $x\in e$. Let $q\colon X\cap M\to\gw$ be any function such that $\bigcup a\subset q$, $q\restriction e$ is an injection, and for each $x\in e$, $q(x)\in d\setminus d_x$. I claim that $q\in P$ is a lower bound of all conditions in the set $a$.

To show that $q\in P$ holds, the elementarity of the model $M$ shows that $\dom(q)$ is $\Gamma$-closed. Now, let $x_0, x_1, x_2\in M\cap X$ be distinct points forming a $\Gamma$-hyperedge, and work to argue that they do not all receive the same value in the coloring $q$.
If they all belong to $\bigcup_{p\in a}\dom(p)$ then this follows from (3). If at least two of them belong to $e$, then this follows from the fact that $q\restriction e$ is an injection. If at least two of them belong to $\dom(p)$ for the same condition $p\in a$, then by the $\Gamma$-closure of $\dom(p)$, all three belong to $\dom(p)$ and they cannot receive the same color as $p$ is a $\Gamma$-coloring. The only remaining configuration is that (after reindexing, if necessary) there are distinct conditions $p_0, p_1\in a$ such that $x_0\in\dom(p_0)\setminus\dom(p_1)$, $x_1\in\dom(p_1)\setminus\dom(p_0)$, and $x_2\in e$ holds. In this case, $x_0, x_2$ are $E(\dom(p_1), \Gamma)$-related and $x_1, x_2$ are $E(\dom(p_0), \Gamma)$-related by the definitions. Since $q(x_2)\notin d_{x_2}$, it follows that $x_2$ receives a color distinct from both the color of $x_0$ and $x_1$.

Now, let $p\in a$ be a condition and argue that $q\leq p$ holds. It is clear that $p\subset q$. Now, let $C\subset X$ be a $E(\dom(p), \Gamma)$-class and argue that $q''C\in\mathcal{I}$. This, however, follows from the fact that $q''C\subset d\cup\bigcup\{p_1''C\colon p_1\in a, p_1\neq p_0\}$ as all sets in the union on the right belong to the ideal $\mathcal{I}$ by (3).
\end{proof}

\begin{corollary}
The poset $P_\Gamma$ is Suslin.
\end{corollary}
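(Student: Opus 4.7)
The plan is to verify directly the three standard requirements of Suslinness: the set of conditions, the ordering, and the compatibility relation should all be Borel (in fact analytic suffices) in a suitable Polish coding of conditions. I code a condition $p\in P_\Gamma$ as an element of $(X\times\gw)^\gw$, listing its countable graph with arbitrary repetitions allowed. This is a Polish coding space.

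First I would check that the set of codes for elements of $P_\Gamma$ is Borel. ``Being a function'' is a standard Borel property. Redundancy of $\Gamma$ combined with arity three reduces ``$\Gamma$-closure of $\dom(p)$'' to the Borel statement that for any two listed domain points $x_0,x_1$ and any $x\in X$ with $\{x_0,x_1,x\}\in\Gamma$, the point $x$ also appears in the listed domain; here I use that $\Gamma$ is $F_\gs$, hence Borel. ``Being a $\Gamma$-coloring'' is the Borel statement that no three listed domain points forming a $\Gamma$-edge receive the same color.

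Next I would show that the ordering $\leq$ is Borel. The inclusion $p_0\subset p_1$ is Borel from the codes. For the second clause of Definition~\ref{pgdefinition}, note that on the countable set $\dom(p_1)\setminus\dom(p_0)$ the equivalence relation $E(\dom(p_0),\Gamma)$ is generated, as $\Gamma$ has arity three, by the Borel relation ``$\{x,x'\}\cup\{y\}\in\Gamma$ for some $y\in\dom(p_0)$''; its transitive closure over a countable set is computed in a Borel fashion from the codes, and the countably many equivalence classes are enumerable in a Borel way. For each such class $C$, the image $p_1''C\subset\gw$ is a Borel function of the code, and membership in $\mathcal{I}$ is Borel because $\mathcal{I}$ is assumed to be a Borel ideal. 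Thus the whole ordering clause becomes a Borel (countable) conjunction.

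Finally, by Proposition~\ref{cocoproposition}, two conditions $p_0,p_1$ are compatible if and only if conjunct (3) of that proposition holds: $\bigcup\{p_0,p_1\}$ is a function, a $\Gamma$-coloring, and for $i\neq j$ and every $E(\dom(p_j),\Gamma)$-class $C$ the image $p_i''C$ belongs to $\mathcal{I}$. Each of these has already been exhibited as a Borel condition in the previous step, so compatibility, and hence incompatibility, is Borel. This gives all three ingredients of Suslinness. There is no real obstacle here beyond keeping track of the codings; the key payoff is Proposition~\ref{cocoproposition}, which replaces the \emph{a priori} $\gs$-closure quantifier in the definition of compatibility by a Borel characterization in terms of $\mathcal{I}$-smallness of images of equivalence classes.
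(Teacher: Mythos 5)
Your overall route is the paper's: the ordering is Borel by direct computation, and the compatibility relation is handled through item (3) of Proposition~\ref{cocoproposition}. However, one step does not go through as written. For the ordering you compute the relevant $E(\dom(p_0),\Gamma)$-classes as the transitive closure of a one-step relation restricted to the listed countable set $\dom(p_1)\setminus\dom(p_0)$; this is legitimate there because $\dom(p_0)\subset\dom(p_1)$ and $\dom(p_1)$ is $\Gamma$-closed, so any walk starting in $\dom(p_1)$ with witnesses in $\dom(p_0)$ stays inside $\dom(p_1)$. In the compatibility clause this justification evaporates: neither domain contains the other, so a walk witnessing that two points of $\dom(p_0)\setminus\dom(p_1)$ are $E(\dom(p_1),\Gamma)$-related may pass through points belonging to neither $\dom(p_0)$ nor $\dom(p_1)$. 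Hence the classes $C$ appearing in item (3) are \emph{not} ``already exhibited as Borel in the previous step,'' and the claim needs an extra observation. Two standard fixes: (i) by redundancy the one-step relation generating $E(b,\Gamma)$ has countable sections, so by the Luzin--Novikov theorem it is a countable union of Borel graphs and its transitive closure is Borel, uniformly in a code for the countable set $b$ (this is precisely the remark the paper makes right after defining $E(b,\Gamma)$); or (ii) all intermediate points of such a walk lie in the $\Gamma$-closure of $\dom(p_0)\cup\dom(p_1)$, a countable set Borel-enumerable from the codes (again via Luzin--Novikov), so the class computation can be restricted to it --- this is the observation the paper states explicitly in the arity-four Suslinness proof.

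The same tool is silently needed earlier: ``$\dom(p)$ is $\Gamma$-closed'' as you phrase it is a universal statement over $x\in X$, hence a priori only coanalytic; redundancy yields Borelness only after Luzin--Novikov converts the countable sections of $\Gamma$ into countably many Borel selectors, turning the quantifier into a countable conjunction. With these repairs your argument is complete and coincides with the paper's (very terse) proof, whose entire content is exactly the reduction of compatibility to the Borel criterion of Proposition~\ref{cocoproposition}.
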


\begin{proof}
For the Suslinity of $\leq$, it is clear that $\leq$ is a Borel relation. The Borelness of its compatibility relation follows directly from Proposition~\ref{cocoproposition}.
\end{proof}

\begin{corollary}
The poset $P_\Gamma$ is $n, 3$-centered.
\end{corollary}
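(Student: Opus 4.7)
The plan is to reduce the claim directly to Proposition~\ref{cocoproposition}. That proposition gives a local combinatorial characterization of when a finite set $a \subset P_\Gamma$ has a common lower bound, namely condition (3): $\bigcup a$ is a function and a $\Gamma$-coloring, and for every pair of distinct $p_0, p_1 \in a$ and every $E(\dom(p_1), \Gamma)$-class $C$, the set $p_0''C$ lies in $\mathcal{I}$. The key observation is that in arity three, each of these three clauses is witnessed on at most three conditions at a time, which is exactly what is needed for $n,3$-centeredness.

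Fix $n$ and let $a \subset P_\Gamma$ have cardinality $n$. The case $n \leq 3$ is immediate, so assume $n \geq 4$, and suppose every $a' \subset a$ with $|a'| \leq 3$ has a common lower bound. I will verify clause (3) of Proposition~\ref{cocoproposition} for $a$. First, given any two distinct $p_0, p_1 \in a$, extend $\{p_0, p_1\}$ to any three-element subset of $a$ and apply Proposition~\ref{cocoproposition}(3) to that subset; this shows $p_0 \cup p_1$ is a function and that $p_0''C \in \mathcal{I}$ whenever $C$ is an $E(\dom(p_1), \Gamma)$-class. This simultaneously handles the "function" clause and the ideal clause of (3) for $a$ itself.

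Second, to see that $\bigcup a$ is a $\Gamma$-coloring, suppose towards contradiction that $\{x_0, x_1, x_2\} \in \Gamma$ and $(\bigcup a)(x_0) = (\bigcup a)(x_1) = (\bigcup a)(x_2)$. Since the arity is three, one can pick conditions $p_0, p_1, p_2 \in a$ (not necessarily distinct) with $x_i \in \dom(p_i)$. The three-element subset $a' = \{p_0, p_1, p_2\} \subset a$ has a common lower bound by hypothesis, and that lower bound is a total $\Gamma$-coloring extending each $p_i$, contradicting that the three points share a color. Hence $\bigcup a$ is a $\Gamma$-coloring, completing the verification of clause (3), and Proposition~\ref{cocoproposition} then provides a common lower bound of $a$.

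There is no real obstacle: the entire statement is a direct corollary of the compatibility criterion, and the only thing worth noting is that the arity of $\Gamma$ matches the centeredness parameter. The same argument verbatim, replacing "three" by "four" throughout (and using the analogue of Proposition~\ref{cocoproposition} for the arity-four construction to be developed later), will yield the parallel $n,4$-centeredness assertion promised in Theorem~\ref{bigtheorem}.
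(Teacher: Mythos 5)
Your argument is correct and is essentially the paper's proof, which simply observes (in contrapositive form) that any failure of clause (3) of Proposition~\ref{cocoproposition} for a finite set is already witnessed by at most three of its members, since the arity of $\Gamma$ is three; your direct verification is the same observation spelled out. The only slip is calling the common lower bound of $\{p_0,p_1,p_2\}$ a \emph{total} $\Gamma$-coloring---it is a condition, hence a partial coloring---but since its domain contains the hyperedge and it extends each $p_i$, the contradiction goes through unchanged.
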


\begin{proof}
If item (3) of Proposition~\ref{cocoproposition} fails for a finite set $a$, then it fails for a subset of it of cardinality at most three.
\end{proof}

\begin{corollary}
$P_\Gamma$  forces the union of the generic filter to be a total $\Gamma$-coloring of $X$.
\end{corollary}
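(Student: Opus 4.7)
The plan is to verify three things about $\dot c = \bigcup \dot G$, where $\dot G$ is the name for the generic filter: (i) $\dot c$ is a function, (ii) $\dot c$ is a $\Gamma$-coloring, and (iii) $\dom(\dot c) = X$. Item (i) is immediate: any two conditions in $\dot G$ have a common lower bound, and conditions are genuine functions extending each other, so $\dot c$ is single-valued. For (ii), suppose $\{x_0,x_1,x_2\} \in \Gamma$ lies in $\dom(\dot c)$. By directedness of $\dot G$ we can find a single $p \in \dot G$ with all three points in $\dom(p)$, and since $p$ is itself a $\Gamma$-coloring by Definition~\ref{pgdefinition}, the three values $p(x_0),p(x_1),p(x_2)$ are not all equal.

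The heart of the matter is (iii), which amounts to a density argument: for every $x \in X$, the set $D_x = \{ q \in P_\Gamma : x \in \dom(q)\}$ is dense. Given any $p \in P_\Gamma$ and any $z \in X$, I apply Proposition~\ref{cocoproposition} to the singleton $a = \{p\}$ and the point $z$. Clause (3) of that proposition holds vacuously for a singleton (the set $\bigcup a = p$ is already a function and a $\Gamma$-coloring, and there are no distinct pairs in $a$), so clause (2) furnishes a common lower bound $q$ of $a$ whose domain contains $z$; this $q \leq p$ is the desired element of $D_z$.

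The main (and in fact only) obstacle would be if Proposition~\ref{cocoproposition} did not allow us to absorb an arbitrary extra point into the domain, but that is exactly what clause (2) provides. Combining density with (i) and (ii), in any generic extension $\dot c$ interprets as a total $\Gamma$-coloring of $X$, as required. The range is countable because each condition has range in $\gw$, and so $\dot c\colon X \to \gw$.
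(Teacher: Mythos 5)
Your proof is correct and follows essentially the same route as the paper: the key point is the density of $\{q\in P_\Gamma\colon z\in\dom(q)\}$, obtained by applying Proposition~\ref{cocoproposition} to singletons $a=\{p\}$ (whose clause (3) is trivially satisfied since $p$ is itself a $\Gamma$-coloring), which is exactly the paper's argument. Your additional verifications that the union of the generic filter is a function and a coloring are routine facts the paper leaves implicit, and they are argued correctly.
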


\begin{proof}
 Proposition~\ref{cocoproposition} applied to sets $a$ of cardinality one shows that for every point $z\in X$, the collection of conditions $p$ containing $z$ in their domain is dense in $P_\Gamma$. 
\end{proof}

\subsection{Construction in arity four}

\begin{definition}
\label{pg4definition}
Let $X$ be a $K_\gs$ Polish space and $\Gamma$ be a redundant $F_\gs$ hypergraph of arity four on $X$. The \emph{coloring poset} $P_\Gamma$ consists of all partial $\Gamma$ colorings $p\colon X\to\gw$ whose domain is a countable $\Gamma$-closed subset of $X$. The ordering is defined by $p_1\leq p_0$ if 

\begin{enumerate}
\item $p_0\subset p_1$;
\item (tight) for every $E(\Gamma, \dom(p_0))$-class $C\subset\dom(p_1)$, $p_1\restriction a$ is injective and $p_1''C\in\mathcal{I}$;
\item (slick) for every $x\in\dom(q)\setminus\dom(p)$, the set $c(x, p, q)=\{i\in\gw\colon$ for some points $x_0\in\dom(p)$ and $x_1, x_2\in\dom(q)\setminus\dom(p)$ such that $p(x_0)=q(x_1)=q(x_2)$ and $\{x, x_0, x_1, x_2\}\in\Gamma\}$ belongs to the ideal $\mathcal{I}$.
\end{enumerate}
\end{definition}

\noindent The definition of $P_\Gamma$ is very similar to arity three, except for the injective part of the tight item and the mysterious slick item. The injectivity part is used to ensure the balance of the poset $P_\Gamma$; some version of it is necessary in the configuration discussion in the proof of Proposition~\ref{4balance}. The slick item causes a lot of grief below, and its only function is to assert that the poset is $n, 4$-centered.

\begin{proposition}
The relation $\leq$ is transitive and $\gs$-closed.
\end{proposition}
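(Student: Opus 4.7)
The strategy is to lift the arity-three proof with additional case analysis for the new tight-injectivity and slick clauses. The pervasive tool is the following consequence of $\Gamma$-closure: whenever $\dom(p)\subset\dom(q)$ are both $\Gamma$-closed, every $E(\dom(p),\Gamma)$-class either sits entirely inside $\dom(q)$ or is disjoint from it, since a single point of overlap propagates through one-step relations via the $\Gamma$-closure of $\dom(q)$ applied to the relevant $a$-enriched sets. As a corollary, inside a chain $p_2\leq p_1\leq p_0$, every $E(\dom(p_0),\Gamma)$-class $C$ disjoint from $\dom(p_1)$ is contained in a single $E(\dom(p_1),\Gamma)$-class, because the chain generating $C$ stays in $X\setminus\dom(p_1)$ and every generating pair of $E(\dom(p_0),\Gamma)$ is also a generating pair of $E(\dom(p_1),\Gamma)$.

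For transitivity, assume $p_2\leq p_1\leq p_0$. Item (1) is immediate. For the tight item, any $E(\dom(p_0),\Gamma)$-class $C\subset\dom(p_2)$ either lies in $\dom(p_1)$, so the tight clause of $p_1\leq p_0$ applies verbatim, or is disjoint from $\dom(p_1)$, in which case it sits inside a single $E(\dom(p_1),\Gamma)$-class $C'\subset\dom(p_2)$ and the tight clause of $p_2\leq p_1$ delivers the injectivity of $p_2\restriction C$ and $p_2''C\subset p_2''C'\in\mathcal{I}$. For the slick item I split on whether $x\in\dom(p_1)$ and, within each case, on where $x_1,x_2$ sit relative to $\dom(p_1)$. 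Most sub-configurations are eliminated: either by $\Gamma$-closure of $\dom(p_1)$ (when three of the four hyperedge-vertices lie in $\dom(p_1)$, the fourth must too, contradicting its placement outside $\dom(p_1)$), or by the injectivity clause of $p_2\leq p_1$ (when $x_1\neq x_2$ lie in a common $E(\dom(p_1),\Gamma)$-class but are forced to share a $p_2$-color). The surviving sub-configurations contribute colors contained in $c(x,p_0,p_1)$ via the slick clause of $p_1\leq p_0$, in $c(x,p_1,p_2)$ via the slick clause of $p_2\leq p_1$, or in $p_2''C''$ where $C''$ is the $E(\dom(p_1),\Gamma)$-class of $x$ via tight of $p_2\leq p_1$. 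All three sets are in $\mathcal{I}$, so $c(x,p_0,p_2)\in\mathcal{I}$ as required.

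For $\gs$-closure, let $\langle p_n:n\in\gw\rangle$ be a descending chain and set $q=\bigcup_np_n$. Routine checks show $q$ is a partial $\Gamma$-coloring with countable $\Gamma$-closed domain, since any finite hyperedge in $\dom(q)$ is contained in a single $\dom(p_k)$. Fix $n$ and verify $q\leq p_n$. For tight: any $E(\dom(p_n),\Gamma)$-class $C\subset\dom(q)$ is contained in a single $\dom(p_k)$, obtained by starting from any $x\in C\cap\dom(p_k)$ and inductively pushing one-step neighbours into $\dom(p_k)$ via $\Gamma$-closure; then the tight clause of $p_k\leq p_n$ closes the argument. For slick: given $x\in\dom(q)\setminus\dom(p_n)$, pick the minimal $k>n$ with $x\in\dom(p_k)$ and repeat the transitivity case analysis with $p_0,p_1,p_2$ replaced by $p_n,p_k,p_m$ for all sufficiently large $m$. $\Gamma$-closure of $\dom(p_k)$ and tight injectivity of $p_m\leq p_k$ together kill every configuration in which at least one of $x_1,x_2$ lies outside $\dom(p_k)$, and the remaining ``both inside $\dom(p_k)$'' configuration contributes colors in $c(x,p_n,p_k)\in\mathcal{I}$.

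The main obstacle is the slick clause of transitivity: the four vertices of the hyperedge may be distributed across three domain layers, producing several parallel sub-configurations, each of which must be matched to the correct tool --- $\Gamma$-closure, tight injectivity, tight smallness, or the slick clause at the right level --- in order to either reach a contradiction or certify that the resulting color lies in an $\mathcal{I}$-set.
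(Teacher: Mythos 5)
Your proof is correct and follows essentially the same route as the paper's: the same dichotomy that an $E(\dom(p),\Gamma)$-class meeting a larger $\Gamma$-closed domain is contained in it, the same tight/slick case split on the position of $x$ and of $x_1,x_2$ relative to $\dom(p_1)$, with the eliminations by $\Gamma$-closure and tight injectivity and the surviving configurations absorbed into $c(x,p_0,p_1)$, $c(x,p_1,p_2)$, and $p_2''C$ exactly as in the text. Your $\gs$-closure verification, which the paper leaves to the reader, is supplied correctly along the same lines (each relevant class or hyperedge configuration is captured inside a single $\dom(p_k)$ and the clauses of $p_k\leq p_n$ or $p_m\leq p_k$ are invoked), so no gap remains.
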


\begin{proof}
To see the transitivity, suppose that $p_2\leq p_1\leq p_0$ are conditions in $P_\Gamma$, and work to show that $p_2\leq p_0$ holds. Clearly, $p_2\subset p_0$. To verify the tight item, let $C\subset X$ be a $E(p_0, \Gamma)$-equivalence class. By the $\Gamma$-closure of $\dom(p_1)$, $C$ is either a subset of $p_1$, or it is disjoint from $\dom(p_1)$ and then it is a subset of a single $E(\dom(p_1), \Gamma)$-class. In the former case, $p_2\restriction C=p_1\restriction C$ is an injection and $p_2''C=p_1''C\in\mathcal{I}$ as $p_1\leq p_0$ holds. In the latter case, $p_2\restriction C$ is an injection and $p_2''C\in\mathcal{I}$ as $p_2\leq p_1$ holds. To verify the slick item, suppose that $x\in\dom(p_2\setminus p_0)$ is a point. The discussion breaks into cases.

 Suppose first that $x\in\dom(p_1)$. Let $x_0\in\dom(p_0)$ and $x_1, x_2\in\dom(p_2)$ be points of the same color forming a $\Gamma$-hyperedge with $x$. It is impossible for both $x_1, x_2$ to belong to $\dom(p_2\setminus p_1)$ since then they would be $E(\dom(p_1), \Gamma)$-related and of distinct colors by the tight item of $p_2\leq p_1$. It is also impossible that exactly one of the points $x_1, x_2$ belongs to $\dom(p_2\setminus p_1)$ by the $\Gamma$-closure of $\dom(p_1)$. Thus, both points $x_1, x_2$ belong to $\dom(p_1)$, so $c(x, p_0, p_2)=c(x, p_0, p_1)\in\mathcal{I}$ by the slick item of $p_1\leq p_0$.

Suppose now that $x\in\dom(p_2\setminus p_1)$. Let $x_0\in\dom(p_0)$ and $x_1, x_2\in\dom(p_2)$ be points of the same color forming a $\Gamma$-hyperedge with $x$. If one of the points $x_1, x_2$ belongs to $\dom(p_1)$, then the other is $E(\dom(p_1), \Gamma)$-related to $x$. Consequently, $c(x, p_0, p_2)\subset c(x, p_1, p_2)\cup p_2''C$ where $C$ is the $E(\dom(p_1), \Gamma)$-class of $x$. So $c(x, p_0, p_2)\in\mathcal{I}$ by a combination of slick and tight items of $p_2\leq p_1$. This concludes the proof of $p_2\leq p_0$ and the transitivity of the $\leq$ relation.

 For the $\gs$-closure, let $\langle p_n\colon n\in\gw\rangle$ be a descending chain of conditions in $P_\Gamma$. Let $q=\bigcup_np_n$ and argue that $q\in P_\Gamma$ is a lower bound of the chain. This is left to the reader.
\end{proof}

\noindent The key point of the definition of the poset is that there is a precise and generous characterization of compatibility of its conditions.

\begin{proposition}
\label{ciciproposition}
Let $a\subset P_\Gamma$ be a finite set. The following are equivalent:

\begin{enumerate}
\item $a$ has a common lower bound;
\item for every point $z\in X$, $a$ has a common lower bound whose domain contains $z$;
\item $\bigcup a$ is a function, a $\Gamma$-coloring, and for any three conditions $p_0, p_1, p_2\in a$, $p_0$ distinct from the other two, and for every point $x\in X\setminus\dom(p_0)$, writing $C$ for the $E(\dom(p_0), \Gamma)$-class of $x$, the function $p_1\cup p_2\restriction C$ is injective with range in $\mathcal{I}$, and the set $c(x, p_0, p_1\cup p_2)$ belongs to the ideal $\mathcal{I}$.
\end{enumerate}
\end{proposition}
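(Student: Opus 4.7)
The implication (2)$\Rightarrow$(1) is immediate, and (1)$\Rightarrow$(3) follows from the tight and slick items of the ordering applied to any common lower bound $q$: indeed, $p_1\cup p_2\restriction C\subseteq q\restriction C$ and $c(x,p_0,p_1\cup p_2)\subseteq c(x,p_0,q)$, so the properties of $q$ transfer downward. The substantive direction is (3)$\Rightarrow$(2), for which I will adapt the template of Proposition~\ref{cocoproposition} to the more intricate arity-four setting.

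Fix $z\in X$ and take a countable elementary submodel $M$ of a sufficiently large structure containing $a,\Gamma,\mathcal{I},z$. Set $\dom(q):=M\cap X$; this is $\Gamma$-closed by elementarity and the redundancy of $\Gamma$. Enumerate the residual set $e:=(M\cap X)\setminus\bigcup_{p\in a}\dom(p)=\{y_n\colon n\in\gw\}$. For each $y_n$, let $D_{y_n}$ be the union, over all triples $p_0,p_1,p_2\in a$ with $y_n\notin\dom(p_0)$ and $p_0\neq p_1,p_2$, of the sets $(p_1\cup p_2)(C_{p_0,y_n})$ and $c(y_n,p_0,p_1\cup p_2)$, where $C_{p_0,y_n}$ denotes the $E(\dom(p_0),\Gamma)$-class of $y_n$. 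By (3) each such set belongs to $\mathcal{I}$, so $D_{y_n}\in\mathcal{I}$ as a finite union. Since $\mathcal{I}$ is not countably generated, pick $d\in\mathcal{I}$ not modulo finite covered by any $D_{y_n}$. Inductively choose $q(y_n)\in d\setminus D_{y_n}$ distinct from all $q(y_m)$ for $m<n$; this is possible because $d\setminus D_{y_n}$ is infinite.

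Set $q:=\bigcup a\cup\{(y_n,q(y_n))\colon n\in\gw\}$. The tight item of $q\leq p_0$ holds because on any $E(\dom(p_0),\Gamma)$-class $C$ the map $q\restriction C$ is injective (the partial injectivities on $C\cap\bigcup_{p'\neq p_0}\dom(p')$ from (3) and on $C\cap e$ from the construction combine since, for each $y\in C\cap e$, $q(y)\notin(p_1\cup p_2)(C)\subseteq D_{y}$ keeps the two pieces range-disjoint), and its range $q''C\subseteq\bigcup_{p'\neq p_0}p'(C)\cup d$ lies in $\mathcal{I}$ as a finite union of $\mathcal{I}$-sets. The slick item follows from the inclusion $c(x,p_0,q)\subseteq c(x,p_0,\bigcup_{p'\neq p_0}p')\cup q(e)\subseteq c(x,p_0,\bigcup_{p'\neq p_0}p')\cup d$, where the first summand is a finite union of $c(x,p_0,p_1\cup p_2)$'s and so is in $\mathcal{I}$ by (3).

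The principal obstacle is verifying that $q$ is a $\Gamma$-coloring. A potential monochromatic hyperedge $\{y_n,x_0,x_1,x_2\}\in\Gamma$ with some $x_i\in e$ is immediately ruled out by the injectivity of $q\restriction e$. The genuinely hard case is $x_0,x_1,x_2\in\bigcup_{p\in a}\dom(p)$, and here I argue by dichotomy using $\Gamma$-closure. The three points cannot all lie in a single $\dom(p_0)$, lest $y_n\in\dom(p_0)$. If some pair $\{x_i,x_j\}$ is contained in a common $\dom(p_0)$ with the third point $x_k\in\dom(p')\setminus\dom(p_0)$, then the 2-set $\{x_i,x_j\}\subseteq\dom(p_0)$ witnesses that $y_n$ and $x_k$ are $E(\dom(p_0),\Gamma)$-related, so $x_k\in C_{p_0,y_n}$ and the common color $c=p'(x_k)$ lies in $p'(C_{p_0,y_n})\subseteq D_{y_n}$. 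Otherwise no pair lies in a common $\dom(p)$, the sets $A_i:=\{p\in a\colon x_i\in\dom(p)\}$ are pairwise disjoint, and picking $p_i\in A_i$ yields three distinct conditions with $c\in c(y_n,p_0,p_1\cup p_2)\subseteq D_{y_n}$. Either way $q(y_n)\neq c$, and the verification is complete.
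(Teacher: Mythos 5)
Your proof is correct and takes essentially the same route as the paper's: a countable elementary submodel, diagonalization of the countably many $\mathcal{I}$-sets $D_{y}$ using that $\mathcal{I}$ is not countably generated, an injective assignment on $e$ with values in $d\setminus D_{y}$, the same three-way case analysis for the coloring check (all three old points in one domain, a pair in one domain, no pair in one domain), and the same tight/slick verifications. The only point left implicit is the trivial case of a hyperedge contained in $\bigcup_{p\in a}\dom(p)$, which is immediate since item (3) already asserts that $\bigcup a$ is a $\Gamma$-coloring.
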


\begin{proof}
Clearly, (2) implies (1). (1) implies (3) by the tight and slick items of the definition of the ordering. To show that (3) implies (2), let $a\subset P_\Gamma$ be a finite set satisfying (3), and let $z\in X$ be any point. To find the lower bound required in (3), let $M$ be a countable elementary submodel of a large structure containing $a$ and $z$ in particular. Write $e=M\cap X\setminus\bigcup_{p\in a}\dom(p)$. For each point $x\in e$, write $d_{x0}=\bigcup\{p_0''C\colon p_0\in a$ and for some $p_1\in a$ distinct from $p_0$, $C$ is the $E(\dom(p_1), \Gamma)$-class of $x\}$; by (3), this is a set in $\mathcal{I}$. Also, write $d_{x1}=\bigcup\{c(x, p_0, p_1\cup p_2)\colon p_0\in a$ and $p_1, p_2\in a$ are conditions distinct from $p_0\}$; by (3) again, this is a set in $\mathcal{I}$. Let $d\subset\gw$ be a set in the ideal $\mathcal{I}$ which is not modulo finite covered by any set $d_{x0}\cup d_{x1}$ for $x\in e$. Let $q\colon X\cap M\to\gw$ be any function such that $\bigcup a\subset q$, $q\restriction e$ is an injection, and for each $x\in e$, $q(x)\in d\setminus (d_{x0}\cup d_{x1})$. I claim that $q\in P$ is a lower bound of all conditions in the set $a$.

To show that $q\in P$ holds, the elementarity of the model $M$ shows that $\dom(q)$ is $\Gamma$-closed. To show that $q$ is a $\Gamma$-coloring, suppose that $\gamma=\{x_i\colon i\in 4\}$ is a $\Gamma$-hyperedge consisting of points in $M\cap X$. The discussion splits into several cases. If no points of $\gamma$ fall into the set $e$, then $\gamma$ is not monochromatic since $\bigcup a$ is a $\Gamma$-coloring. If more than one point of $\gamma$ belongs to $e$ then $\gamma$ is not monochromatic since $q\restriction e$ is an injection. The remaining case is that exactly one point of $\gamma$, say $x_3$, belongs to $e$. Now, no condition in $a$ can contain all three of the remaining points of $\gamma$ by the $\Gamma$-closure of the domains of the conditions. This leads to two subcases. Either, there is a condition $p_1$ which contains exactly two points of $\gamma$, say $x_1, x_2$. Let $p_0\in a$ be a condition which contains the remaining point $x_0\in\gamma$. Observe that $x_0$ and $x_3$ are $E(\dom(p_0), \Gamma)$-related, so $q(x_0)\neq q(x_3)$ since $q(x_3)\notin d_{x_30}$. Or, each condition of $a$ contains at most one point of $\gamma$. Let $p_0\in a$ be a condition containing $x_0$. Then, either the set $x_0, x_1, x_2$ is not monochromatic in $\bigcup a$, or else $q(x_3)$ is distinct from its monochromatic color as $q(x_3)\notin d_{x_31}$.

To show that $q$ is a common lower bound of the set $a$, let $p\in a$ be any condition. To show that $q\leq p$ holds, I must verify the tight item and the slick item of Definition~\ref{pg4definition}. For the tight item, let $C\subset\dom(p_1)$ be a $E(\dom(p_0), \Gamma)$-class of some point $x\in \dom(q\setminus p_0)$. Then $q\restriction (C\setminus e)$ is an injective function with range in $\mathcal{I}$ by (3), and $q\restriction (C\cap e)$ is an injection with range in $\mathcal{I}$ which in addition does not use any values of $q\restriction (C\setminus e)$ by the choice of $q$. This confirms the tight item. For the slick item, use (3) in addition with the fact that the range of $q\restriction e$ belongs to the ideal $\mathcal{I}$.
\end{proof}

\begin{corollary}
$P_\Gamma$ is a Suslin partial order.
\end{corollary}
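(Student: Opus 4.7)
The plan is to check that the three data defining a Suslin poset---membership in $P_\Gamma$, the ordering $\leq$, and the compatibility relation---are all Borel in the natural coding of countable partial functions $X \to \gw$. The approach parallels the arity-three corollary, with the extra work concentrated on the two new features of Definition~\ref{pg4definition}: the injectivity requirement in the tight clause and the entirely new slick clause.

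First I would verify that membership in $P_\Gamma$ is Borel. Countability of $\dom(p)$ is Borel, being a $\Gamma$-coloring is Borel since $\Gamma$ is $F_\gs$, and $\Gamma$-closedness of $\dom(p)$ is a countable-quantifier condition because redundancy of $\Gamma$ ensures that each edge-completion step adds only countably many points. Second, for $\leq$ I would fix conditions $p_0, p_1$ with countable domains and observe that the $E(\dom(p_0),\Gamma)$-classes meeting $\dom(p_1)$ form a countable Borel partition, uniformly enumerable from the parameters. The tight clause then becomes a countable conjunction of the Borel conditions ``$p_1 \restriction C$ is injective and $p_1''C \in \mathcal{I}$'', using that $\mathcal{I}$ is Borel. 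For the slick clause, the set $c(x, p_0, p_1) \subset \gw$ is defined by countable existential quantifiers over $\dom(p_0)$ and $\dom(p_1)$ together with the $F_\gs$ condition of $\Gamma$-hyperedge membership, hence is a Borel subset of $\gw$ uniformly in the parameters, and the slick clause is the countable conjunction ``$c(x, p_0, p_1) \in \mathcal{I}$'' over $x \in \dom(p_1) \setminus \dom(p_0)$.

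Third, Borelness of the compatibility relation follows immediately from Proposition~\ref{ciciproposition} specialised to two-element sets $a = \{p_0, p_1\}$, taking $p_2 = p_1$ in clause (3): compatibility is equivalent to a finite Borel conjunction of the same shape of injectivity and ideal-membership statements analysed above, together with ``$p_0 \cup p_1$ is a function and a $\Gamma$-coloring''. I do not expect a genuine obstacle here; the only point worth double-checking is that the $E(\dom(p_0), \Gamma)$-classes really admit a uniform Borel enumeration as $p_0$ varies, but this again reduces to iterating the edge-completion operation, which stays countable at each stage by redundancy.
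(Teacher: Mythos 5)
Your analysis of the ordering $\leq$ is fine: there the tight and slick clauses quantify only over $E(\dom(p_0),\Gamma)$-classes contained in $\dom(p_1)$ and over points of $\dom(p_1)\setminus\dom(p_0)$, and since $\dom(p_0)\subset\dom(p_1)$ and $\dom(p_1)$ is $\Gamma$-closed, all relevant chains stay inside the countable set $\dom(p_1)$, so these are countable conjunctions of Borel conditions. The gap is in the compatibility step, which is exactly the nontrivial point. Clause (3) of Proposition~\ref{ciciproposition} quantifies over \emph{every} $x\in X\setminus\dom(p_0)$, not just over points of the domains; read naively this is a $\mathbf{\Pi}^1_1$ statement, not Borel. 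Your claim that compatibility is ``a finite Borel conjunction of the same shape of injectivity and ideal-membership statements analysed above'' silently replaces that quantifier by one over $\dom(p_1)\setminus\dom(p_0)$, and for the slick-type requirement this replacement is not faithful: a point $x$ lying outside $\dom(p_0)\cup\dom(p_1)$ but inside the $\Gamma$-closure of $\dom(p_0)\cup\dom(p_1)$ (redundancy produces such points, since the fourth vertex of a hyperedge through three domain points need not lie in either domain) can have $c(x,p_0,p_1)$ nonempty, and the demand $c(x,p_0,p_1)\in\mathcal{I}$ at exactly such $x$ is an essential part of the criterion --- in the proof of (3)$\Rightarrow$(2) these are the points $x\in e$, outside all the domains, whose sets $d_{x1}$ constrain the amalgamating condition $q$. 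So as written your reformulation would certify as compatible pairs that the proposition does not.

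What is missing is the single observation that constitutes the paper's proof: the universal quantification over $x$ in clause (3) may be restricted to the $\Gamma$-closure of $\dom(p_0)\cup\dom(p_1)$, a countable set obtained in a uniformly Borel way by iterating the edge-completion operation (countable at each stage by redundancy). Outside that closure the $E(\dom(p_0),\Gamma)$-class of $x$ misses $\dom(p_1)$ and $c(x,p_0,p_1)$ is empty, so the requirements hold vacuously; inside it, the relevant fragment of $E(\dom(p_0),\Gamma)$ is generated by chains that never leave the closure (again by $\Gamma$-closedness), so clause (3) becomes a countable conjunction of Borel statements about a countable structure, hence Borel --- note it is countable, not finite, and with $a=\{p_0,p_1\}$ you need both instantiations of the clause, with the roles of $p_0$ and $p_1$ exchanged. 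A similar caution applies to your first paragraph: asserting that $\dom(p)$ is $\Gamma$-closed also involves, on its face, a quantifier over all potential completion points in $X$, so ``redundancy makes it a countable-quantifier condition'' needs the same kind of justification rather than being immediate.
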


\begin{proof}
It is clear that the relation $\leq$ is Borel. The compatibility of conditions in $P_\Gamma$ is Borel as well as per Proposition~\ref{ciciproposition} and the observation that the universal quantification over $x$ in it can be restricted to the $\Gamma$-closure of $\dom(\bigcup(a))$.
\end{proof}

\begin{corollary}
$P_\Gamma$ is $n, 4$-centered for any $n\in\gw$. 
\end{corollary}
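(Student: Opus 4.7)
The plan is to invoke Proposition~\ref{ciciproposition}, which characterizes exactly when a finite set $a\subset P_\Gamma$ has a common lower bound in terms of condition (3). Centeredness will then follow from the observation that condition (3) is \emph{local} in the sense relevant to $n,4$-centeredness: each of its clauses can fail only with witnesses drawn from a subset of $a$ of size at most four. Thus, if every four-element subset of $a$ has a common lower bound, it satisfies (3), and therefore so does $a$ itself.

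Concretely, I would fix a finite set $a\subset P_\Gamma$ all of whose subsets of size at most four have a common lower bound, and verify each of the three clauses of condition (3) in turn for $a$. First, ``$\bigcup a$ is a function'' is a pairwise property: any disagreement between two maps in $a$ is witnessed by a pair, so the assumption for pairs in $a$ suffices. Second, ``$\bigcup a$ is a $\Gamma$-coloring'' is witnessed by $\Gamma$-hyperedges, which have arity four; any monochromatic hyperedge in $\bigcup a$ involves at most four conditions from $a$ (one per vertex), so checking this on the corresponding four-element subset suffices. Third, the tight-injectivity and slick clauses are explicitly stated in terms of three conditions $p_0, p_1, p_2\in a$ (with $p_0$ distinct from $p_1, p_2$, but possibly $p_1=p_2$), so any failure of these clauses is witnessed by a subset of $a$ of size at most three.

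Combining these observations, if condition (3) failed for $a$, it would fail for some subset of $a$ of cardinality at most four; by Proposition~\ref{ciciproposition} that subset would then fail to have a common lower bound, contradicting the hypothesis. Hence (3) holds for $a$, and a second application of Proposition~\ref{ciciproposition} yields a common lower bound for $a$. There is essentially no obstacle to overcome here: the whole design of the slick item in Definition~\ref{pg4definition}, and of the matching clause in Proposition~\ref{ciciproposition}, is precisely to keep all compatibility obligations bounded in arity by four, which is exactly the content of this corollary.
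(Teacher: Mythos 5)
Your proposal is correct and follows exactly the paper's route: the paper's proof is the one-line observation that a failure of item (3) of Proposition~\ref{ciciproposition} for a finite set is already a failure for a subset of cardinality at most four, which you have simply spelled out clause by clause (function: two conditions; coloring: at most four, one per vertex of a hyperedge; tight/slick: at most three). Nothing is missing.
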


\begin{proof}
If item (3) of Proposition~\ref{ciciproposition} fails for a finite set, then it fails for a subset of it of cardinality at most four.
\end{proof}

\begin{corollary}
$P_\Gamma$ forces the union of the generic filter to be a total $\Gamma$-coloring of $X$.
\end{corollary}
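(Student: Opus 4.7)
The proof is the exact analogue of the arity-three corollary and is an immediate application of Proposition~\ref{ciciproposition} to singletons. The plan is as follows.

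First I would observe that for any condition $p \in P_\Gamma$ and any point $z \in X$, condition (3) of Proposition~\ref{ciciproposition} applied to the singleton $a = \{p\}$ holds vacuously: the universal quantification over three conditions $p_0, p_1, p_2 \in a$ with $p_0$ distinct from the other two is empty, and $\bigcup a = p$ is by hypothesis a function and a $\Gamma$-coloring. Thus, by the equivalence (3)$\Rightarrow$(2) in Proposition~\ref{ciciproposition}, the set $D_z = \{q \in P_\Gamma : z \in \dom(q)\}$ is dense below every condition, and hence dense in $P_\Gamma$.

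Next, I would let $G \subset P_\Gamma$ be a generic filter and consider $c = \bigcup G$. Since the conditions in $G$ are pairwise compatible and each is a function from a subset of $X$ to $\gw$, $c$ is a function. Genericity of $G$ against each dense set $D_z$ (for $z \in X$) shows that $\dom(c) = X$.

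Finally, to show $c$ is a $\Gamma$-coloring, I would fix an arbitrary $\Gamma$-hyperedge $\{x_0, x_1, x_2, x_3\}$. By the density statement and directedness of the filter, there is a single condition $p \in G$ whose domain contains all four points. Since $p$ is by definition of $P_\Gamma$ a $\Gamma$-coloring of its domain, the hyperedge is not monochromatic under $p$, and hence not monochromatic under $c$. No step here presents any real difficulty; everything has already been packaged into the equivalence of Proposition~\ref{ciciproposition}, and the only substantive point is the trivial verification that clause (3) is vacuous on singletons.
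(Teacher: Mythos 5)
Your proof is correct and follows the paper's own argument: the paper likewise derives the corollary by applying Proposition~\ref{ciciproposition} to singletons $a=\{p\}$ to get density of the sets of conditions containing a given point $z\in X$ in their domain, the rest being the standard genericity argument you spell out. The only difference is that you make explicit the (correct) observation that clause (3) is vacuous for singletons, which the paper leaves implicit.
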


\begin{proof}
Proposition~\ref{ciciproposition} applied to sets $a$ of cardinality one shows that for every point $z\in X$, the collection of conditions $p$ containing $z$ in their domain is dense in $P_\Gamma$.
\end{proof}

\subsection{The balance proof}

The posets constructed in the previous subsections are transcendentally balanced under the Continuum Hypothesis. The proof depends on the following technical claim.

\begin{proposition}
\label{equiproposition}
Let $X$ be a $K_\gs$ Polish space and $\Gamma$ a redundant $F_\gs$ hypergraph of arity $n\geq 2$ on $X$. Let $V[G_0], V[G_1]$ be mutually transcendental generic extensions of the ground model. Then on $X\cap V[G_0]$, $E(V\cap X, \Gamma)= E(V[G_1]\cap X, \Gamma)$.
\end{proposition}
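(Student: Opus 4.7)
Both equivalence relations have the same domain when restricted to $X\cap V[G_0]$: by Proposition~\ref{iproposition}, $X\cap V[G_0]\setminus V=X\cap V[G_0]\setminus V[G_1]$. The plan is to read the claim as the equality of these two equivalence relations on this common set, with chains and intermediate points understood to lie inside $V[G_0]$.

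The inclusion $E(V\cap X,\Gamma)\subseteq E(V[G_1]\cap X,\Gamma)$ is immediate from $V\cap X\subseteq V[G_1]\cap X$: a generator of the smaller relation, a hyperedge $a\cup\{u,v\}\in\Gamma$ with $a\subseteq V\cap X$ and $u,v\in V[G_0]\setminus V$, is also a generator of the larger relation on the common domain, so chains for the former are chains for the latter.

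For the non-trivial reverse inclusion, fix $x_0,x_1\in X\cap V[G_0]\setminus V$ that are $E(V[G_1]\cap X,\Gamma)$-related via a chain $x_0=u_0,u_1,\ldots,u_k=x_1$ in $X\cap V[G_0]\setminus V$ with hyperedges $a_i\cup\{u_i,u_{i+1}\}\in\Gamma$ whose connectors $a_i\subseteq V[G_1]\cap X$ have size $n-2$. I plan to replace each $a_i$ by some $a_i'\subseteq V\cap X$ such that $a_i'\cup\{u_i,u_{i+1}\}\in\Gamma$; the resulting chain, with intermediate points still in $V[G_0]\setminus V$ and connectors in $V$, witnesses $E(V\cap X,\Gamma)$-relatedness of $x_0$ and $x_1$.

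Each single-edge replacement is a direct application of the $K_\gs$-proposition stated just before Corollary~\ref{jcorollary}. Identify $\Gamma$ with the corresponding symmetric subset of $X^n$ of injective tuples whose range is a $\Gamma$-hyperedge; since $X$ is $K_\gs$ and $\Gamma$ is $F_\gs$, this is a $K_\gs$-subset of $X^n$ coded in $V$. Let $C\subseteq X^{n-2}\times X^2$ be the $K_\gs$-set of tuples $(a,u,v)$ with $a\cup\{u,v\}\in\Gamma$. Then $a_i\in V[G_1]\cap X^{n-2}$, $(u_i,u_{i+1})\in V[G_0]\cap X^2$, and $(a_i,u_i,u_{i+1})\in C$; invoking the proposition with the roles of $V[G_0]$ and $V[G_1]$ swapped (permitted by mutual transcendence) yields $a_i'\in V\cap X^{n-2}$ with $(a_i',u_i,u_{i+1})\in C$, as needed. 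The only technical subtlety to check is the $K_\gs$-structure of $\Gamma$ viewed as a subset of $X^n$, which follows from a compact exhaustion of $X$ and the fact that closed subsets of compact spaces are compact.
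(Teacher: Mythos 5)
Your easy inclusion is fine, and so is your single-edge replacement device in isolation: identifying $\Gamma$ with a $K_\gs$ set of injective tuples in $X^n$ and applying the two-coordinate $K_\gs$ proposition with the roles of $V[G_0]$ and $V[G_1]$ swapped is legitimate, since mutual transcendence is symmetric. The gap is in your opening move, where you announce that the claim is to be read ``with chains and intermediate points understood to lie inside $V[G_0]$.'' That is not the statement, and the restriction removes exactly the difficulty. The relation $E(X\cap V[G_1],\Gamma)$ is the smallest equivalence relation on $X\setminus V[G_1]$ computed in the ambient extension: two points $x,x'\in X\cap V[G_0]$ may be related by a walk whose intermediate points code information from both generics (or from the ambient extension) and hence lie in none of $V$, $V[G_0]$, $V[G_1]$; only the connectors are guaranteed to lie in $V[G_1]$. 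This stronger form is also what the intended application needs: in the balance proof (Proposition~\ref{4balance}) the $E(\dom(p_0),\Gamma)$-classes are computed in the model where compatibility of $p_0$ and $p_1$ is verified, with no constraint on where the intermediate points of witnessing walks live.

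Once an intermediate point $u_i$ may fall outside $V[G_0]$, your edge-by-edge scheme collapses: for the edge $a_i\cup\{u_i,u_{i+1}\}$ the pair $\langle u_i,u_{i+1}\rangle$ is then not a $V[G_0]$-point of $X^2$, so the two-coordinate proposition does not apply to any pair of models at hand, and no ground-model connector can be extracted for that edge. The paper's proof is built precisely around this obstacle: it quantifies the intermediate points out existentially. It fixes a ground-model compact set $K$ containing all points of the walk, a ground-model compact $\Delta\subset\Gamma$ containing its hyperedges, and a separation constant $\eps>0$, and forms the compact set $C$ of connector tuples for which \emph{some} walk from $x$ to $x'$ through $K$, with hyperedges in $\Delta$ and $\eps$-separated points, exists. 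Being a projection of a compact set of walks, $C$ is compact and coded in $V[G_0]$ (only $x,x',K,\Delta,\eps$ enter its definition); it contains the actual connector tuple, which lies in $V[G_1]$, so mutual transcendence yields a connector tuple in $V$, with the intermediate points of the new walk re-chosen rather than preserved. To repair your argument you must either reproduce some version of this projection trick, or prove separately that a walk between two $V[G_0]$-points with connectors in $V[G_1]$ can always be rerouted through intermediate points in $V[G_0]$; the latter is not obvious and is not addressed in your write-up.
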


\begin{proof}
The left-to-right inclusion is obvious as increasing the set $b$ increases the equivalence relation $E(b, \Gamma)$. The right-to-left inclusion is the heart of the matter. Suppose that $x, x'\in X\cap V[G_0]$ are two points in $X\cap V[G_0]$ which are $E(X\cap V[G_1], \Gamma)$ equivalent. Then there must be a number $m\in\gw$, points $x_i\in X$ for $i\leq m$ and sets $y_i\in [X\cap V[G_1]]^{n-2}$ for $i<m$ such that $x=x_0$, $x'=x_m$, and $\forall i<m\ y_i\cup \{x_i, x_{i+1}\}\in\Gamma$. The tuple $\langle x_i\colon i\leq m, y_i\colon i<m\rangle$ will be called a \emph{walk} from $x$ to $x'$.

Let $K\subset X$ be a compact set coded in the ground model containing all points mentioned in the walk. Let $d$ be a complete metric on $X$ and let $\eps>0$ be a positive rational such that for any two points mentioned in the walk, if they are distinct then they have $d$-distance at least $\eps$. Let $\Delta\subset\Gamma$ be a ground model coded compact set such that all hyperedges in the walk belong to $\Delta$. 

Now, consider the space $Y=\{\langle z_i\colon i\in m\rangle\colon z_i\in [K]^{n-2}$ and distinct points in each $z_i$ have a distance at least $\eps\}$; this is a compact subspace of $([K]^{n-2})^m$ in the ground model. Consider the set $C\subset Y$ consisting of tuples $\langle z_i\colon i\in m\rangle$ which can serve in a walk from $x$ to $x'$ which uses only points in $K$, whose hyperedges belong to $\Delta$, and in which any two distinct points have distance at least $\eps$. The set $C\subset Y$ is compact, as it is a projection of a compact set of walks. The set $C$ is coded in $V[G_0]$, and the sequence $\langle y_i\colon i\in m\rangle\in V[G_1]$ belongs to it. By the mutual transcendence of the models $V[G_0]$ and $V[G_1]$, the set $C$ contains a ground model element. A review of definitions reveals that this  means that $x, x'$ are $E(X\cap V, \Gamma)$-related.
\end{proof}

\begin{proposition}
\label{4balance}
Let $\Gamma$ be a redundant $F_\gs$ hypergraph of arity three or four on a $K_\gs$ Polish space $X$. In the poset $P_\Gamma$, 

\begin{enumerate}
\item for every total $\Gamma$-coloring $c\colon X\to \gw\times\gw$, the pair $\langle\coll(\gw, X), \check c\rangle$ is transcendentally balanced;
\item for every balanced pair $\langle Q, \tau\rangle$ there is a total coloring $c\colon X\to \gw\times\gw$ such that the pairs $\langle Q, \tau\rangle$ and $\langle\coll(\gw, X), \check c\rangle$ are equivalent;
\item distinct total $\Gamma$-colorings provide inequivalent balanced pairs.
\end{enumerate}
\end{proposition}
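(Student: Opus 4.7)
The strategy is to verify the three items by unpacking the definition of virtual balance in light of the compatibility criteria in Propositions~\ref{cocoproposition} and \ref{ciciproposition}, using Proposition~\ref{equiproposition} as the central technical input.

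For item (1), I fix mutually transcendental generic extensions $V[G_0], V[G_1]$ and conditions $p_0 \in V[G_0]$, $p_1 \in V[G_1]$ each stronger than the virtual condition associated with $c$. Unfolding the balance definition, this forces each $p_i$ to agree with $c$ on $\dom(p_i) \cap V$, to satisfy $p_i''(C \cap (\dom(p_i) \setminus V)) \in \mathcal{I}$ for every $E(V \cap X, \Gamma)$-class $C$, and, in arity four, to satisfy the corresponding injectivity and slick clauses against $c \restriction (V \cap X)$. To show compatibility of $p_0$ and $p_1$, I verify item (3) of Proposition~\ref{cocoproposition} or~\ref{ciciproposition}. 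First, $p_0 \cup p_1$ is a function because by Proposition~\ref{iproposition} $\dom(p_0) \cap \dom(p_1) \subseteq V$, and there both functions agree with $c$. The $\Gamma$-coloring property requires a case analysis on how a candidate monochromatic hyperedge distributes among $V$, $V[G_0] \setminus V$ and $V[G_1] \setminus V$: hyperedges contained in one of the $\dom(p_i)$ are handled by the coloring property of $p_i$, while cross-hyperedges are reduced to configurations inside $V$ via Proposition~\ref{equiproposition} and ruled out by the tight and slick data of $p_i \leq \bar p$. For the tight clause of compatibility, an $E(\dom(p_1), \Gamma)$-class intersected with $\dom(p_0) \setminus V$ is, by Proposition~\ref{equiproposition}, contained in a single $E(V \cap X, \Gamma)$-class, so the $\mathcal{I}$-bound (and injectivity in arity four) transfer from $p_0 \leq \bar p$. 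The slick clause calls for the analogous reduction of straddling four-hyperedges to walks in $V$.

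For item (2), given a balanced pair $\langle Q, \tau\rangle$ in $V$, I plan to show that for each $x \in X$ the value $\tau(x)$ is forced, via balance against pairs of mutually generic filters of $Q$, into a countable subset of $\gw$ whose entries can be indexed by a second $\gw$-coordinate; using the Continuum Hypothesis to select consistent values along a well-ordering of $X$, one obtains a total $\Gamma$-coloring $c \colon X \to \gw \times \gw$. The equivalence of $\langle Q, \tau\rangle$ and $\langle \coll(\gw, X), \check c\rangle$ follows from item (1) by amalgamating a $Q$-generic with a $\coll(\gw, X)$-generic inside a further extension. For item (3), if $c \ne c'$ differ at some $x \in X$, the singleton conditions $\{\langle x, c(x)\rangle\}$ and $\{\langle x, c'(x)\rangle\}$ are incompatible in $P_\Gamma$, preventing equivalence of the associated virtual conditions.

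The main obstacle is item (1), and specifically the slick clause in arity four. The injectivity addition in the tight clause of Definition~\ref{pg4definition} exists precisely to make this verification work: without it, a four-hyperedge straddling $\dom(p_0) \setminus V$ and $\dom(p_1) \setminus V$ with a repeated color on one side could evade the ideal-containment promised by $p_i \leq \bar p$. The delicate case analysis of such straddling configurations, and their reduction to walks lying entirely in $V$ via Proposition~\ref{equiproposition}, is where I expect the bulk of the technical work to lie.
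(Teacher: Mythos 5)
Your treatment of item (1) is essentially the paper's argument: Proposition~\ref{iproposition} gives $\dom(p_0)\cap\dom(p_1)\subset V$, Proposition~\ref{equiproposition} identifies $E(\dom(p_i),\Gamma)$ with $E(X\cap V,\Gamma)$ on the opposite model, and one then verifies item (3) of Proposition~\ref{cocoproposition} or~\ref{ciciproposition}, with the injectivity clause of the tight item disposing of the $2$--$2$ split of a four-hyperedge exactly as you indicate. One simplification you miss: the slick set $c(x,p_0,p_1)$ is simply a subset of $p_0''C$, where $C$ is the $E(\dom(p_1),\Gamma)$-class of $x$, so it is handled by the already-verified tight bound and no separate reduction of straddling hyperedges to walks in $V$ is needed there.

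The genuine gap is in item (2). First, balance of $\langle Q,\tau\rangle$ does not merely confine $\tau(\check x)$ to a countable set of candidate values: after strengthening $\tau$ so that $Q\Vdash X\cap V\subset\dom(\tau)$, the value at each ground model point is outright decided, since two mutually generic realizations assigning different values to $x$ would fail to have a functional union and hence be incompatible, contradicting balance. Your appeal to the Continuum Hypothesis to ``select consistent values'' is therefore both unnecessary and unavailable (the proposition does not assume CH; CH enters only in the subsequent proposition), and, worse, a coloring obtained by such a selection would not satisfy $Q\Vdash\tau(\check x)=\check c(\check x)$, which is precisely what the rest of the argument needs. Second, the equivalence of $\langle Q,\tau\rangle$ with $\langle\coll(\gw,X),\check c\rangle$ does not ``follow from item (1) by amalgamating'' generics: item (1) only yields compatibility of conditions already known to lie below $\check c$, whereas the crux is to show $Q\Vdash\tau\leq\check c$, i.e., that the realization $\tau/G$ satisfies the tight and, in arity four, slick items over $X\cap V$; invoking item (1) here is circular. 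The paper proves this by a separate balance argument: if some $q\in Q$ forced the failure of $\tau\leq\check c$, take mutually generic filters $G_0,G_1$ through $q$; by Proposition~\ref{equiproposition} the offending $E(X\cap V,\Gamma)$-class is contained in a single $E(\dom(\tau/G_1),\Gamma)$-class, so item (3) of Proposition~\ref{cocoproposition} or~\ref{ciciproposition} fails for the pair $\{\tau/G_0,\tau/G_1\}$, contradicting balance. Finally, a small slip in item (3): the singleton conditions you name are not below the virtual conditions in question (a condition below $\check c$ must extend $c$); the correct witnesses to inequivalence are the realizations $c$ and $c'$ themselves, whose union is not a function.
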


\begin{proof}
For the first item of the theorem, suppose that $c\colon X\to\gw\times\gw$ is a total coloring. Clearly, $\coll(\gw, X)\Vdash\check c\in P_\Gamma$ holds. Now suppose that $V[G_0], V[G_1]$ are mutually transcendental generic extensions of the ground model and $p_0\in V[G_0]$, $p_1\in V[G_1]$ are conditions stronger than $c$. I must prove that the conditions are compatible. 

\noindent\textbf{Arity three.} Use Proposition~\ref{cocoproposition} and verify its item (3). It is clear that $p_0\cap p_1$ is a function, since the domains of $p_0$ and $p_1$ intersect in $X\cap V$, and on that set both $p_0, p_1$ are equal to $c$.
It is also clear that $p_0\cup p_1$ is a $\Gamma$-coloring, as any hyperedge in its domain has to have two elements either in $\dom(p_0)$ or in $\dom(p_1)$. By the $\Gamma$-closure of these two sets it follows that all three elements of the hyperedge must belong to one of them, so the hyperedge is not monochromatic as both $p_0, p_1$ are $\Gamma$-colorings. Finally, note that $E(\dom(p_0), \Gamma)\restriction \dom(p_1)$ is equal to $E(X\cap V, \Gamma)\restriction\dom(p_1)$ by Proposition~\ref{equiproposition}. Thus, for every $E(\dom(p_0), \Gamma)$-class $C$, $C\cap \dom(p_1)$ is a subset of a single $E(X\cap V, \Gamma)$-class and $p_1''C\in\mathcal{I}$ follows from $p_1\leq C$.

\noindent\textbf{Arity four.} Use Proposition~\ref{ciciproposition} and verify its item (3). It is clear that $p_0\cap p_1$ is a function, since the domains of $p_0$ and $p_1$ intersect in $X\cap V$, and on that set both $p_0, p_1$ are equal to $c$.

To see that $p_0\cup p_1$ is a $\Gamma$-coloring, assume that $\gamma\in\Gamma$ is a hyperedge. If one of the conditions $p_0, p_1$ contains at least three elements of $\gamma$, then it contains all of them by the $\Gamma$-closure of its domain, so $\gamma$ is not monochromatic as both $p_0, p_1$ are $\Gamma$-colorings. The only other case is that each $p_0, p_1$ contain exactly two elements of $\gamma$. Then, the two points of $\gamma$ in $\dom(p_0)$ do not belong to $V$, and they are $E(\dom(p_1), \Gamma)$-equivalent. By Proposition~\ref{equiproposition}, they are also $E(X\cap V, \Gamma)$-equivalent. It follows from the tight item of $p_0\leq c$ that the two points receive distinct colors by $p_1$.

To see that for every point $x$ in $X$, the function $p_1''C$ is an injection with range in $\mathcal{I}$, where $C$ is the $E(\dom(p_1), \Gamma)$-equivalence class of $x$, apply  Proposition~\ref{equiproposition} again to see that $C\cap\dom(p_0)$ is in fact a single $E(X\cap V, \Gamma)$-class and apply the tight item of $p_0\leq c$. To see that for every point $x\in X$ the set $c(x, p_0, p_1)$ belongs to $\mathcal{I}$, note that it is a subset of $p_0''C$ where $C$ is the $E(\dom(p_1), \Gamma)$-equivalence class of $x$.

For the second item of the theorem, let $\langle Q, \tau\rangle$ be a balanced pair. Strengthening $\tau$ if necessary, we may assume that $Q\Vdash X\cap V\subset\dom(\tau)$. A balance argument then shows that for every ground model point $x\in X$ there is a pair $c(x)\in\gw\times\gw$ such that $Q\Vdash \tau(\check x)=c(x)$. I claim that the $\Gamma$-coloring $c$ works as in (2). It will be enough to show that $Q\Vdash\tau\leq\check c$. If this failed, then there must be a condition $q\in Q$ which forces the failure of $\tau\leq c$. Let $G_0, G_1\subset Q$ be mutually generic filters containing the condition $q$ and let $p_0=\tau/G_0$ and $p_1=\tau/G_1$. The two conditions $p_0, p_1$ should be compatible in $P_\Gamma$, but the contradictory assumption together with Propositions~\ref{cocoproposition} or~\ref{ciciproposition} shows that they are not.

The third item of the theorem is immediate. 
\end{proof}

\begin{proposition}
Under CH, the poset $P_\Gamma$ is transcendentally balanced.
\end{proposition}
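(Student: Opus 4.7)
By Proposition~\ref{4balance}(1), it suffices to produce, below each given $p\in P_\Gamma$, a total $\Gamma$-coloring $c\colon X\to\gw\times\gw$ such that $\check c\leq p$ in the $\coll(\gw,X)$-extension; the pair $\langle\coll(\gw,X),\check c\rangle$ is then automatically transcendentally balanced. Under CH, $|X|=\aleph_1$, and the plan is to build such a $c$ by transfinite recursion of length $\aleph_1$, exploiting the two-coordinate structure of the codomain.

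Enumerate $X=\{x_\alpha:\alpha<\aleph_1\}$ with $\dom(p)$ as an initial segment, and set $c\restriction\dom(p)=p$, embedded into a designated column of $\gw\times\gw$ disjoint from the region $I\times\gw$ reserved for points outside $\dom(p)$. Here $I$ is a fixed infinite element of $\mathcal{I}$; it exists because $\mathcal{I}$ is not countably generated and hence strictly larger than $\mathrm{Fin}$. By redundancy of $\Gamma$ and countability of $\dom(p)$, each $E(\dom(p),\Gamma)$-class on $X\setminus\dom(p)$ is countable.

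At stage $\alpha$ with $x_\alpha\in X\setminus\dom(p)$, choose $c(x_\alpha)\in I\times\gw$ subject to the constraints (a) no $\Gamma$-hyperedge through $x_\alpha$ whose remaining vertices are already colored becomes monochromatic, and (b) in arity four, $c(x_\alpha)\neq c(y)$ for every previously processed $y$ in the same $E(\dom(p),\Gamma)$-class as $x_\alpha$. The role of the second $\gw$ factor of the codomain is precisely to guarantee the feasibility of such a selection: by distributing the chosen values thinly across the columns $\{i\}\times\gw$ of $I\times\gw$, the set of pairs forbidden at each countable stage never exhausts the available pool. Managing this bookkeeping at countable-cofinality stages is the main technical point I anticipate.

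Verification that $\check c\leq p$ is then routine. The extension property is immediate by construction. For every $E(\dom(p),\Gamma)$-class $C$ on $X\setminus\dom(p)$, the image $c''C$ has first coordinate contained in $I\in\mathcal{I}$, which yields the tight item under the canonical identification of $\gw\times\gw$ with $\gw$ that pulls $\mathcal{I}$ back to an ideal, with injectivity on $C$ built in for arity four. The slick item in arity four holds trivially because $\rng(c\restriction\dom(p))$ and $\rng(c\restriction X\setminus\dom(p))$ occupy disjoint regions of $\gw\times\gw$, so $c(x,p,c)=\emptyset$ for every $x\in X\setminus\dom(p)$. Proposition~\ref{4balance}(1) then supplies the transcendental balance of $\check c$ below $p$, completing the argument.
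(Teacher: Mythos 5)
Your overall frame is the same as the paper's: reduce via Proposition~\ref{4balance}(1) to producing, below a given $p$, a total $\Gamma$-coloring $c$ with $c\leq p$, and build $c$ by a length-$\gw_1$ recursion using CH. But the pointwise construction you substitute for that recursion has concrete gaps. First, your slick-item argument depends on re-embedding $p$ into a designated column of $\gw\times\gw$ disjoint from $I\times\gw$; this is not allowed, since the ordering of Definition~\ref{pg4definition} requires $p\subset c$, so the values of $c$ on $\dom(p)$ are already fixed. Moreover, a condition can have range all of the codomain, so in general there is no region disjoint from $\rng(p)$ to reserve for the new points, and the claimed trivialization $c(x,p,c)=\emptyset$ fails. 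Second, the tight item demands that $c''C$ itself lie in the ideal $\mathcal{I}$ with which $P_\Gamma$ was defined; knowing only that the first coordinates of the new values lie in a fixed $I\in\mathcal{I}$ yields this only if one replaces $\mathcal{I}$ by a Fubini-type ideal on $\gw\times\gw$, i.e., changes the parameter of the poset. The phrase ``canonical identification of $\gw\times\gw$ with $\gw$ that pulls $\mathcal{I}$ back to an ideal'' does not provide this for an arbitrary admissible $\mathcal{I}$.

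Third, and most seriously, the feasibility of each step of your recursion is exactly where the work lies, and you leave it open. At stage $\ga$ both the pool $I\times\gw$ and the forbidden set are countable: constraint (a) forbids one value for each pair (or triple) of previously coloured points forming a hyperedge with $x_\ga$, and constraint (b) forbids the values already used on the $E(\dom(p),\Gamma)$-class of $x_\ga$; nothing in ``distributing the values thinly across the columns'' shows that the forbidden set does not exhaust the pool, and the difficulty is present at every stage, not only at limits. The paper sidesteps this entirely: it builds a decreasing chain $p=p_0\geq p_1\geq\dots$ of genuine conditions, absorbing $x_\ga$ at successor steps by item (2) of Proposition~\ref{cocoproposition} (arity three) or Proposition~\ref{ciciproposition} (arity four), and taking unions at limits by $\gs$-closure; the elementary-submodel block colouring inside those propositions, with a fresh set $d\in\mathcal{I}$ escaping the countably many constraint sets (this is where ``$\mathcal{I}$ is not countably generated'' is used), is precisely the bookkeeping you anticipate but do not supply. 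The verification $c=\bigcup_\ga p_\ga\leq p$ is then routine: each $E(\dom(p),\Gamma)$-class is countable, hence contained in some $\dom(p_\ga)$, and the sets $c''C$ and $c(x,p,c)$ stabilize as subsets of $\gw$ along the chain, so their membership in $\mathcal{I}$ follows from $p_\ga\leq p$ and transitivity. I recommend running your recursion through conditions of $P_\Gamma$ and citing those propositions rather than colouring points one at a time.
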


\begin{proof}
Suppose that the continuuum hypothesis holds and let $p\in P_\Gamma$ be a condition. I must find a total coloring $c\colon X\to\gw\times\gw$ which is stronger than $p$ in the sense of the ordering on the poset $P_\Gamma$. To do this, use the CH assumption to find an enumeration $\langle x_\ga\colon\ga\in\gw_1\rangle$ of the space $X$. By recursion on $\ga\in\gw_1$ build a decreasing sequence of conditions in $P_\Gamma$ so that $p=p_0$, $p_{\ga+1}\leq p_\ga$ is a condition containing $x_\ga$ in its domain, and $p_\ga=\bigcup_{\gb\in\ga}p_\gb$ for limit ordinals $\ga$. In the end, let $c=\bigcup_\ga p_\ga$; it is easy to check that $c\leq p$ as required.
\end{proof}

\bibliographystyle{plain}
\bibliography{odkazy,zapletal}

\begin{thebibliography}{1}

\bibitem{komjath:three}
Paul Erd{\H o}s and P{\' e}ter Komj{\' a}th.
\newblock Countable decompositions of {$\mathbb{R}^2$} and {$\mathbb{R}^3$}.
\newblock {\em Discrete and Computational Geometry}, 5:325--331, 1990.

\bibitem{jech:newset}
Thomas Jech.
\newblock {\em Set Theory}.
\newblock Springer Verlag, New York, 2002.

\bibitem{kanovei:book}
Vladimir Kanovei.
\newblock {\em Borel Equivalence Relations}.
\newblock University Lecture Series 44. American Mathematical Society,
  Providence, RI, 2008.

\bibitem{rosendal:amalgamation}
Alexander Kechris and Christian Rosendal.
\newblock Turbulence, amalgamation and generic automorphisms of homogeneous
  structures.
\newblock {\em Proc. London Math. Soc.}, 94:302--350, 2007.

\bibitem{z:geometric}
Paul Larson and Jind{\v r}ich Zapletal.
\newblock {\em Geometric set theory}.
\newblock AMS Surveys and Monographs. American Mathematical Society,
  Providence, 2020.

\bibitem{miller:bsl}
Benjamin~D. Miller.
\newblock The graph-theoretic approach to descriptive set theory.
\newblock {\em Bull. Symbolic Logic}, 18:554--574, 2012.

\bibitem{z:interpretations}
Jind{\v r}ich Zapletal.
\newblock Interpreter for topologists.
\newblock {\em Journal of Logic and Analysis}, 7:1--61, 2015.

\end{thebibliography}

\end{document}